\numberwithin{equation}{section}
\DeclareSymbolFont{lettersA}{U}{txmia}{m}{it}
\DeclareMathSymbol{\m@thbbch@rA}{\mathord}{lettersA}{129}
\DeclareMathSymbol{\m@thbbch@rB}{\mathord}{lettersA}{130}
\DeclareMathSymbol{\m@thbbch@rC}{\mathord}{lettersA}{131}
\DeclareMathSymbol{\m@thbbch@rD}{\mathord}{lettersA}{132}
\DeclareMathSymbol{\m@thbbch@rE}{\mathord}{lettersA}{133}
\DeclareMathSymbol{\m@thbbch@rF}{\mathord}{lettersA}{134}
\DeclareMathSymbol{\m@thbbch@rG}{\mathord}{lettersA}{135}
\DeclareMathSymbol{\m@thbbch@rH}{\mathord}{lettersA}{136}
\DeclareMathSymbol{\m@thbbch@rI}{\mathord}{lettersA}{137}
\DeclareMathSymbol{\m@thbbch@rJ}{\mathord}{lettersA}{138}
\DeclareMathSymbol{\m@thbbch@rK}{\mathord}{lettersA}{139}
\DeclareMathSymbol{\m@thbbch@rL}{\mathord}{lettersA}{140}
\DeclareMathSymbol{\m@thbbch@rM}{\mathord}{lettersA}{141}
\DeclareMathSymbol{\m@thbbch@rN}{\mathord}{lettersA}{142}
\DeclareMathSymbol{\m@thbbch@rO}{\mathord}{lettersA}{143}
\DeclareMathSymbol{\m@thbbch@rP}{\mathord}{lettersA}{144}
\DeclareMathSymbol{\m@thbbch@rQ}{\mathord}{lettersA}{145}
\DeclareMathSymbol{\m@thbbch@rR}{\mathord}{lettersA}{146}
\DeclareMathSymbol{\m@thbbch@rS}{\mathord}{lettersA}{147}
\DeclareMathSymbol{\m@thbbch@rT}{\mathord}{lettersA}{148}
\DeclareMathSymbol{\m@thbbch@rU}{\mathord}{lettersA}{149}
\DeclareMathSymbol{\m@thbbch@rV}{\mathord}{lettersA}{150}
\DeclareMathSymbol{\m@thbbch@rW}{\mathord}{lettersA}{151}
\DeclareMathSymbol{\m@thbbch@rX}{\mathord}{lettersA}{152}
\DeclareMathSymbol{\m@thbbch@rY}{\mathord}{lettersA}{153}
\DeclareMathSymbol{\m@thbbch@rZ}{\mathord}{lettersA}{154}
\long\def\DoLongFutureLet #1#2#3#4{%
   \def\@FutureLetDecide{#1#2\@FutureLetToken
      \def\@FutureLetNext{#3}\else
      \def\@FutureLetNext{#4}\fi\@FutureLetNext}
   \futurelet\@FutureLetToken\@FutureLetDecide}
\def\DoFutureLet #1#2#3#4{\DoLongFutureLet{#1}{#2}{#3}{#4}}
\def\@EachCharacter{\DoFutureLet{\ifx}{\@EndEachCharacter}%
   {\@EachCharacterDone}{\@PickUpTheCharacter}}
\def\m@keCharacter#1{\csname\F@ntPrefix#1\endcsname}
\def\@PickUpTheCharacter#1{\m@keCharacter{#1}\@EachCharacter}
\def\@EachCharacterDone \@EndEachCharacter{}
\DeclareRobustCommand*{\varmathbb}[1]{\gdef\F@ntPrefix{m@thbbch@r}%
  \@EachCharacter #1\@EndEachCharacter}
\newtheorem{theorem}{Theorem}[section]
\newtheorem{lemma}[theorem]{Lemma}
\newtheorem{proposition}[theorem]{Proposition}
\newtheorem{corollary}[theorem]{Corollary}
\newtheoremstyle{mytheoremstyle} 
    {1em plus .2em minus .1em}                    
    {1em plus .2em minus .1em}                    
    {\rmfamily}                   
    {}                           
    {\bfseries}                   
    {.}                          
    {.5em}                       
    {}  
\theoremstyle{mytheoremstyle}
\newtheorem{definition}[theorem]{Definition}
\newtheorem{example}[theorem]{Example}
\newcommand{\zero}{\mathbf{0}} 
\newcommand{\one}{\mathbf{1}} 
\newcommand\Rarrow{\Rightarrow}
\newcommand\Larrow{\Leftarrow}
\newcommand\rarrow{\rightarrow}
\newcommand\iffdef{\;\mathrel{\mathord{:}\mathord{\longleftrightarrow}}\;}
\newcommand{\power}{\mathcal{P}} 
\newcommand\defeq{\coloneqq} 
\newcommand\dftt{\mathtt{df}\,}
\DeclareMathOperator{\Id}{Id} 
\DeclareMathOperator{\closed}{C}
\DeclareMathOperator{\clopen}{CO}
\let\topo=\tau
\newcommand{\topos}{\topo_s}
\DeclareMathOperator{\Ul}{Ul}
\let\Ult=\Ul
\DeclareMathOperator{\Fi}{Fi}
\DeclareMathOperator{\Cong}{Con} 
\let\Fil=\Fi
\newcommand\Es{\ensuremath{\operatorname{{\mathsf{Es}}}}}
\newcommand\Cm{\ensuremath{\operatorname{{\mathsf{Cm}}}}}
\newcommand\Em{\ensuremath{\operatorname{{\mathsf{Em}}}}}
\renewcommand{\zero}{0}
\renewcommand{\one}{1}
\DeclareMathOperator{\upop}{\uparrow} 
\DeclareMathOperator{\downop}{\downarrow} 
\newcommand\Nat{\varmathbb{N}}
\newcommand{\Even}{\varmathbb{E}} 
\newcommand{\CA}{\mathsf{CA}} 
\newcommand{\MMA}{\mathsf{MMA}} 
\newcommand\PSB{\mathsf{PSB}}
\newcommand\PsC{\mathsf{PsC}}
\newcommand\SIA{\mathsf{SIA}}
\newcommand{\StwoIA}{\mathsf{S^2IA}}
\newcommand{\ult}{u} 
\newcommand{\ultV}{v}
\newcommand{\ultW}{w}
\newcommand{\fil}{\mathscr{F}} 
\newcommand{\scrH}{\mathscr{H}}
\newcommand{\scrG}{\mathscr{G}}
\newcommand{\filH}{\scrH}
\newcommand{\filG}{\scrG}
\newcommand{\ide}{\mathscr{I}} 
\newcommand{\barT}{\overline{T}}
\renewcommand{\mid}{:}
\newcommand\condi{\rightarrowtriangle}
\newcommand\Tcondi{\condi_{T}}
\newcommand\barTcondi{\condi_{\barT}}
\newcommand{\TprimePX}{T'_{\power(X)}}
\newcommand{\barTprimePX}{\barT'_{\power(X)}}
\newcommand\TAcondi{\condi_{T_A}}
\newcommand\D[2]{D^{\condi}_{#1}(#2)}
\newcommand{\Uf}{\ensuremath{\operatorname{{\mathsf{Uf}}}}}
\newcommand{\Co}{\ensuremath{\operatorname{{\mathsf{Co}}}}}
\newcommand{\tand}{\text{ and }}
\newcommand{\qtiff}{\quad\text{iff}\quad}
\newcommand\klam[1]{\left\langle#1\right\rangle}
\newcommand*{\QED}{\null\nobreak\hfill\ensuremath{\mathord{\dashv}}}
\newcommand{\trarrow}{\condi}
\newcommand\mfr{\mathfrak}
\newcommand\frA{\mfr{A}}
\newcommand\frB{\mfr{B}}
\newcommand\frM{\mfr{M}}
\newcommand\frN{\mfr{N}}
\newcommand\frX{\mfr{X}}
\newcommand\calF{\mathcal{F}}
\newcommand\mbf{\mathbf}
\newcommand\bfF{\mbf{F}}
\title{Conditional algebras}
\author[]{Sergio Celani, Rafa\l\ Gruszczy\'nski and Paula Mench\'on}
\date{}
\address{Sergio Celani, \textsc{Orcid}: 0000-0003-2542-4128\\ Conicet and
University of the Center of the Buenos Aires Province, Tandil (Unicen)\\
Argentina}
\email{sergiocelani@gmail.com}
\address{Rafa\l\ Gruszczy\'nski, \textsc{Orcid:} 0000-0002-3379-0577\\ Paula Mench\'on, \textsc{Orcid:} 0000-0002-9395-107X\\
Department of Logic\\
Nicolaus Copernicus University in Toru\'n\\
Poland}
\email{gruszka@umk.pl, paula.menchon@v.umk.com}
\begin{document}

\begin{abstract}
Drawing on the classic paper by \cite{Chellas-BCL}, we propose a general algebraic framework for studying a~binary operation of \emph{conditional} that models universal features of the ``if \ldots, then \ldots'' connective as strictly related to the unary modal necessity operator. To this end, we introduce a variety of \emph{conditional algebras}, and we develop its duality and canonical extensions theory.

\smallskip
  
  \noindent MSC: Primary 06E25, Secondary 03G05

\smallskip
  
  \noindent Keywords: conditionals, Boolean algebras, Boolean algebras with operators, conditional algebras, modal algebras, pseudo-subordination algebras, binary operators
\end{abstract}

\maketitle

\section{Introduction}

In the paper ``Basic conditional logic'' \citeyearpar{Chellas-BCL}, Brian Chellas put forward a family of propositional systems whose goal was to capture the properties of a binary connective $\rightarrowtail$, the so-called \emph{conditional}.\footnote{Chellas used the `$\Rarrow$' symbol. As we reserve this one for our meta-implication connective, we have chosen `$\rightarrowtail$' instead.} The presentation and the analysis were based on a~simple and appealing idea: if-then sentences are closely involved with relative necessity. Since a~sentence with the form ``if $p$, then $q$'' conveys dependency of the content of its consequent $q$ on the content of the antecedent $p$, it may be interpreted in the following way: $q$ must be true, provided that $p$ is true, or---to put it differently---$q$ obtains in all those states in which $p$~holds true. Thus, on the most elementary level, the conditional conceived as an operator is strictly related to the well-known necessity operator.

The three basic axioms of Chellas's---that result from the above-mentioned idea---were
\begin{gather}
    p\rightarrowtail\top\tag{CN}\,,\\
    (p\rightarrowtail(q\wedge r))\rarrow ((p\rightarrowtail q)\wedge(p\rightarrowtail r))\,,\tag{CM}\\
    ((p\rightarrowtail q)\wedge(p\rightarrowtail r))\rarrow (p\rightarrowtail(q\wedge r))\,,\tag{CC}
\end{gather}
and the system based on them (and the classical logic) was named \emph{normal conditional logic}. A special kind of frame semantics (with a function $f\colon W\times\power(W)\to\power(W)$ in lieu of the routine ternary relation in $W^3$) was developed, and the standard metatheorems connecting it with the logic were proven.\footnote{Further analysis of the Chellas approach to conditionals was carried out by \cite{Segerberg-NOCL}.}

It is easy to see that each axiom has its counterpart in which $\rightarrowtail$ is replaced with the family of relative necessity operators $[p]$, one for each variable $p$. Thinking of $p\rightarrowtail q$ as $[p]q$ ($q$ is necessary relative to $p$), instead of the three axioms, we have infinitely many of them as instances of the following axioms schemata:
\begin{gather*}
    [\alpha]\top\,,\\
    [\alpha](q\wedge r)\rarrow ([\alpha]q\wedge[\alpha]r)\,,\\
    ([\alpha]q\wedge[\alpha]r)\rarrow [\alpha](q\wedge r)\,.
\end{gather*}
where $\alpha$ marks the place in which we can put any propositional variable from the language. These explain the choice of the three axioms for the conditional logic.

The algebraic semantics for Chellas's logic (and related systems) was provided by \cite{Nute-TICL} in the form of Boolean algebras with a~binary operator $\condi$ interpreting $\rightarrowtail$.\footnote{Originally, Nute used `$\ast$' instead of our `$\condi$'.} He defined classes of algebras satisfying various conditions put upon $\condi$, of which the variety of normal regular algebras satisfying conditions:
\begin{gather}
    a\condi 1=1\,,\tag{C1}\label{C1}\\
    (a\condi b)\wedge(a\condi c)=a\condi(b\wedge c)\,,\tag{C2}\label{C2}
\end{gather}
corresponds directly to Chellas's normal conditional logic. Among others, Nute proved the completeness theorem for this logic and normal regular algebras.\footnote{A similar algebraic semantics for the basic conditional logic of Chellas, but based on Heyting algebras, was presented in \citep{Weiss-BICL}.}

In this paper, we aim to focus our attention on and delve into the study of Boolean algebras expanded with the $\condi$ operator that meets \eqref{C1} and \eqref{C2}, and which in light of Nute's results are algebraic models of Chellas's normal conditional logic. The consequence of \eqref{C2} is that, in the second coordinate, $\condi$ is isotone, which opens the possibility of utilizing the algebraic tools developed in \citep{Celani-TDFBAWANNMO,Menchon-PhD,Celani-et-al-MDS} for monotonic operators. To be able to do this, we must ensure full monotonicity, which can easily be obtained by adopting the third condition
\begin{equation}\tag{C3}\label{C3}
(a\vee b)\condi c\leq(a\condi c)\wedge(b\condi c)\,,
\end{equation}
that, in the event, is equivalent to $\condi$ being antitone in the first coordinate. Our goals justify the choice, but the condition is also the counterpart of the following axiom considered by Chellas
\begin{equation}\tag{CM$'$}\label{CM-prime}
    ((p\vee q)\rightarrowtail r)\rarrow ((p\rightarrowtail r)\wedge (q\rightarrowtail r))\,,
\end{equation}
being one of the axioms considered for conditionals.\footnote{The algebraic counterpart of \eqref{CM-prime} is missing from Nute's study of conditional logic, yet his completeness theorem extends to the normal conditional logic with \eqref{CM-prime} and the variety of conditional algebras. Let us also observe, after \citep[Section 3.5]{Egre-et-al-TLOC}, that the propositional version of this axiom (referred to as Simplification of Disjunctive
Antecedents) is ``a bone of contention between theorists''. Although considered intuitively valid by many logicians, it was shown to entail monotonicity of conditionals, which is considered as one of the three paradigmatic invalidities of conditional logic (next to transitivity and contraposition). However, the monotonicity can be obtained by means of a certain rule of inference called LLE in \citep[Section 3.3]{Egre-et-al-TLOC}. The rule has been abandoned by some scholars in preference for the axiom (see e.g., \citealp{Nute-TICL}; \citealp{Fine-CWPW}; \citealp{Ciardelli-et-al-TWITTOC}; \citealp{Santorio-AATINS}), but some other contemporary treatments of the logic of conditionals do not mention SDA at all (see \citealp{Unterhuber-et-al-CACICSS}; \citealp{Weiss-FOCL}).  In our environment, there is no risk of obtaining the undesired monotonicity, and our hereby presented work may be considered as a~contribution to the algebraic analysis of the very basic conditional logic validating SDA.}
Thus, we adopt the following
\begin{definition}\label{df:conditional-algebra} $\frA\defeq\langle A,\condi\rangle$ is a \emph{conditional} algebra if $A$ is a Boolean algebra and $\condi$ is a~binary operation on $A$ that satisfies \eqref{C1}, \eqref{C2}, and \eqref{C3}. The $\condi$ operation will be called the \emph{conditional}. The class of conditional algebras is a variety, which we will denote by means of $\CA$. \QED
\end{definition}

In the sequel, we begin with the development of the theory of canonical extensions of conditional algebras. In particular, we prove that $\CA$ is closed under such extensions. Then, we introduce the notion of a \emph{multimodal antitone algebra}, which is a Boolean algebra $A$ with a family of unary necessity operators $\Box_a$ (indexed by the elements of a subalgebra of $A$) corresponding to $\condi$ in the same way as Chellas's $[p]$'s correspond to $\rightarrowtail$. We prove that conditional algebras and multi-modal antitone algebras are term equivalent, and we investigate the relation between their canonical extensions. Having finished this, we develop topological and categorical dualities for conditional algebras, and we characterize their subalgebras and congruences. 

From a purely algebraic point of view, $\CA$ is a generalization of some well-known varieties, such as:
\begin{enumerate}
    \item  pseudo-contact algebras \citep{Duntsch-et-al-RBTODSAPA}, and their equivalent counterparts subordination algebras \citep{Bezhanishvili-G-et-al-IERGSADVD},
    \item pseudo-subordination algebras \citep{Celani-et-al-AVOACRTSO},
    \item strict-implication algebras and its subvariety of symmetric strict-implication algebras \citep{Bezhanishvili-at-al-ASICFCHS}.
\end{enumerate}
In the last section of the paper, we characterize these as subvarieties of $\CA$, using the tools developed in earlier sections.

\subsection{Notational conventions}

If $\leq$ is a partial order on a set $X$, and  $Y\subseteq X$, then
\[
\upop Y\defeq\{x\in X:\exists y\in Y(y\leq x)\}
\]
in an \emph{upward closure} of $Y$, and 
\[
\downop Y\defeq\{x\in X:\exists y\in Y(x\leq y)\}
\]
is its \emph{downward closure}. If $Y=\{y\}$, then we will write $\upop y$ and $\downop y$ instead of $\upop\{y\}$
and $\downop\{y\}$, respectively. We call $Y$ an \emph{upset} (resp.
\emph{downset}) if $Y=\upop Y$ (resp. $Y=\downop Y$). 

The set-theoretical complement of
a subset $Y\subseteq X$ will be denoted by $Y^{c}$. Similarly, if $R$ is an $n$-ary relation, then $R^c$ is its complement, and we write $R(x_1,\ldots,x_n)$ (resp. $R^c(x_1,\ldots,x_n)$) instead of $\klam{x_1,\ldots,x_n}\in R$ (resp. $\klam{x_1,\ldots,x_n}\notin R$). If $f\colon X \to Y$ is a function and $U \subseteq X$, then $f[U]=\{f(x):x\in U\}$ is the \emph{direct image} of $U$ through $f$. If $V \subseteq Y$, then $f^{-1}[V]=\{x:f(x)\in V\}$ is the \emph{inverse image} of $V$ through $f$.

For a topological space $\langle X,\topo\rangle$,  $\closed(\tau)$ and $\clopen(\tau)$ are, respectively, the families of its closed and clopen subsets. 

$\langle A,\vee,\wedge,\neg,0,1\rangle$ is a Boolean algebra with the operations of, respectively, join, meet, and complement, and two constants, bottom and top. We usually identify the Boolean algebra with its domain. 

For a Boolean algebra $A$, $\Fi(A)$ and $\Ul(A)$ are sets of all its, respectively, filters and ultrafilters. We assume that the domain of $A$ is an element of $\Fi(A)$, the only \emph{improper} filter of $A$, and that ultrafilters are maximal sets (w.r.t. set theoretical inclusion) in the set of all \emph{proper} filters of $A$. We will use calligraphic letters `$\fil$', `$\filH$', `$\filG$' to denote filters, and letters `$\ult$', `$\ultV$', `$\ultW$' to range over ultrafilters. 

The family of open sets of the Stone space $\Ul(A)$ of a Boolean algebra $A$ will be denoted by~`$\topos$'.
$\varphi\colon A\condi\power(\Ul(A))$ is the standard Stone mapping 
\[
\varphi(a)\defeq\{\ult\in\Ul(A):a\in\ult\}.
\]
As is well known, the family $\{ \varphi(a):a\in A\} $
is a field of subsets of $\Ul(A)$, and therefore is a Boolean subalgebra
of the algebra $\mathcal{P}(\Ul(A))$. If $\fil$ is a filter of $A$, by means of $\varphi(\fil)$---abusing the notation slightly---we will denote the set $\{\ult\in\Ul(A): \fil\subseteq\ult\}$ of all ultrafilters extending $\fil$. Recall that $\varphi(\fil)=\bigcap\{ \varphi(a):a\in\fil\}$. Given $Y\subseteq\Ul(A)$, $\fil_Y$ is the filter $\{a\in A\mid Y\subseteq\varphi(a)\}$. If $Y$ is closed, then $Y=\varphi(\fil_Y)$. 

\section{Conditional algebras and their canonical extensions}

\begin{lemma}\label{lem:orden}
    In every conditional algebra $\frA\defeq\langle A,\condi\rangle$\/\textup{:}
    \begin{gather}
    \text{If $a\leq b$, then $x\condi a\leq x\condi b$.}\label{eq:cond-order-preserving}\\
        \text{If $a\leq b$, then $b\condi x\leq a\condi x$.}\label{eq:cond-order-reversing}\\
        (a\condi b)\wedge(x\condi y)\leq (a\wedge x)\condi(b\wedge y)\,.\label{eq:cond-conjunction-twice}  \end{gather}
\end{lemma}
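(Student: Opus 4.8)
The plan is to derive all three claims purely equationally from the defining conditions, relying on the standard Boolean-order facts that $a\leq b$ iff $a\wedge b=a$ iff $a\vee b=b$, and that $u\leq v$ iff $u\wedge v=u$. Only \eqref{C2} and \eqref{C3} are needed; \eqref{C1} plays no role here.

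First, for \eqref{eq:cond-order-preserving}, I would assume $a\leq b$, so that $a\wedge b=a$, and instantiate \eqref{C2} with $a$ and $b$ in the second coordinate to obtain $(x\condi a)\wedge(x\condi b)=x\condi(a\wedge b)=x\condi a$. Since this identity says exactly that $x\condi a\leq x\condi b$, the claim follows; this is precisely isotonicity in the second coordinate. For \eqref{eq:cond-order-reversing}, I would again assume $a\leq b$, now reading it as $a\vee b=b$, and feed this into \eqref{C3}: $b\condi x=(a\vee b)\condi x\leq(a\condi x)\wedge(b\condi x)\leq a\condi x$, which gives antitonicity in the first coordinate.

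For the final inequality \eqref{eq:cond-conjunction-twice}, I would combine the antitonicity just established with \eqref{C2}. Starting from $a\wedge x\leq a$ and $a\wedge x\leq x$, applying \eqref{eq:cond-order-reversing} in the first coordinate of each factor yields $a\condi b\leq(a\wedge x)\condi b$ and $x\condi y\leq(a\wedge x)\condi y$, whence $(a\condi b)\wedge(x\condi y)\leq\bigl((a\wedge x)\condi b\bigr)\wedge\bigl((a\wedge x)\condi y\bigr)$. Now the two factors on the right share the common first coordinate $a\wedge x$, so \eqref{C2} collapses their meet to $(a\wedge x)\condi(b\wedge y)$, which is exactly the desired bound.

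I expect no genuine obstacle in any of these steps. The only point requiring a little care is the very last one: one must first use antitonicity to push both first coordinates down to the common value $a\wedge x$, so that \eqref{C2} becomes applicable, rather than attempting to manipulate the two coordinates simultaneously.
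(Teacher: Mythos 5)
Your proof is correct and follows essentially the same route as the paper's: \eqref{eq:cond-order-preserving} via \eqref{C2} applied to $a=a\wedge b$, \eqref{eq:cond-order-reversing} via \eqref{C3} applied to $b=a\vee b$, and \eqref{eq:cond-conjunction-twice} by first lowering both antecedents to $a\wedge x$ using antitonicity and then collapsing the meet with \eqref{C2}. No gaps; the paper's argument is the same, just stated more tersely.
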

\begin{proof}
    \eqref{eq:cond-order-preserving} Let $a=a\wedge b$. Then by \eqref{C2}
    \[
        x\condi a=x\condi (a\wedge b)=(x\condi a)\wedge (x\condi b)\,.
    \]

    \eqref{eq:cond-order-reversing} This time, let $b=a\vee b$. Then applying \eqref{C3} we get
    \[
    b\condi x=(a\vee b)\condi x\leq(a\condi x)\wedge (b\condi x)\,. 
    \]

    \eqref{eq:cond-conjunction-twice} Since $a\wedge x\leq a,x$ we apply the previous point and \eqref{C2}
    \begin{align*}
(a\condi b)\wedge(x\condi y)&{}\leq [(a\wedge x)\condi b]\wedge [(a\wedge x)\condi y]\\
&{}=(a\wedge x)\condi (b\wedge y)\,.\qedhere
    \end{align*}
\end{proof}

Since---unlike, e.g., in the case of pseudo-subordination algebras---conditional operators are not in one-to-one correspondence with binary normal modal operators, it is necessary to employ a more expressive framework than ternary relations between ultrafilters for their representation. To build ultrafilter frames for conditional algebras we are going to need a ternary hybrid relation (i.e., involving both points and sets of points) associated with the conditional operator. Given $\langle A,\condi\rangle\in\CA$, and $X,Y\subseteq A$ we define the set
\begin{align*}
\D{X}{Y}\defeq{}&\{b\in A:(\exists a\in Y)\,a\condi b\in X\}\\
={}&\{\pi_2(\klam{a,b})\mid a\in Y\tand a\condi b\in X\}\,,
\end{align*}
where $\pi_2$ is the standard projection on the second coordinate. Intuitively, $\D{X}{Y}$ is the set of all consequents of those conditionals in $X$ whose antecedents are in $Y$.

\noindent

\begin{lemma}\label{lem:D-filter}
    Let $\frA\defeq\klam{A,\condi}\in\CA$. 
    \begin{enumerate}\itemsep3pt
        \item If $Y_1\subseteq Y_2\subseteq A$, then $\D{X}{Y_1}\subseteq\D{X}{Y_2}$, for any $X\subseteq A$.
        \item If $Y$ is upward closed, then $\D{Y}{X}$ is too, for any $X\subseteq A$.
        \item $D_{\filH}^{\condi}(\fil)\in\Fi(A)$, for all $\fil,\filH\in\Fi(A)$. 
    \end{enumerate}
\end{lemma}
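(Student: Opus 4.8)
The plan is to unpack the definition of $\D{X}{Y}$ and then verify each item using only the monotonicity properties collected in Lemma~\ref{lem:orden}. Recall that $b\in\D{X}{Y}$ means exactly that there is some $a\in Y$ with $a\condi b\in X$; I will refer to such an $a$ as a \emph{witness} for $b$. Part (1) is then purely set-theoretic: a witness $a\in Y_1$ for $b$ automatically lies in $Y_2\supseteq Y_1$ and witnesses $b\in\D{X}{Y_2}$, so no algebra is needed.

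For part (2), fix an upset $Y$ and suppose $b\in\D{Y}{X}$ with $b\leq b'$. Choosing a witness $a\in X$ (so $a\condi b\in Y$) and applying \eqref{eq:cond-order-preserving} gives $a\condi b\leq a\condi b'$; since $Y$ is upward closed, $a\condi b'\in Y$, so the same $a$ witnesses $b'\in\D{Y}{X}$. This is the point where monotonicity of $\condi$ in the second coordinate is used.

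For part (3) I would check the three closure conditions for $\D{\filH}{\fil}$ with $\filH,\fil\in\Fi(A)$. Nonemptiness: $\fil$ is a filter, hence nonempty, so picking any $a\in\fil$ and invoking \eqref{C1} gives $a\condi 1=1\in\filH$, whence $1\in\D{\filH}{\fil}$. Upward closure is immediate from part (2), since the filter $\filH$ is in particular an upset. For closure under meets, given $b_1,b_2\in\D{\filH}{\fil}$ with witnesses $a_1,a_2\in\fil$, I set $a\defeq a_1\wedge a_2\in\fil$ and use \eqref{eq:cond-conjunction-twice}:
\[
(a_1\condi b_1)\wedge(a_2\condi b_2)\leq(a_1\wedge a_2)\condi(b_1\wedge b_2)=a\condi(b_1\wedge b_2)\,.
\]
Since $\filH$ is a filter the left side lies in $\filH$, and upward closure of $\filH$ then yields $a\condi(b_1\wedge b_2)\in\filH$, so $a$ witnesses $b_1\wedge b_2\in\D{\filH}{\fil}$. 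The single nontrivial ingredient is the inequality \eqref{eq:cond-conjunction-twice}, which is precisely what allows two separate witnesses to be amalgamated into one; this is the only place I anticipate any difficulty, and it is already available from Lemma~\ref{lem:orden}.
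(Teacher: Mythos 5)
Your proof is correct and follows essentially the same route as the paper's: part (1) by the trivial witness argument, part (2) via \eqref{eq:cond-order-preserving} and upward closure of the first subscript, and part (3) combining \eqref{C1} for the top element, part (2) for upward closure, and \eqref{eq:cond-conjunction-twice} to amalgamate the two witnesses into their meet. The only cosmetic difference is that the paper obtains $1\in\D{\filH}{\fil}$ from $1\condi 1=1$ rather than from $a\condi 1=1$ for an arbitrary $a\in\fil$, which is an equally valid instance of \eqref{C1}.
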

\begin{proof}
Ad 1. If $b\in\D{X}{Y_1}$, then by definition there exists $a\in Y_1$ such that $a\condi b\in X$. But $a\in Y_2$ by the assumption and we get that $b\in\D{X}{Y_1}$.

\smallskip

Ad 2. Let $a\in\D{Y}{X}$ and $a\leq b$. By definition, in $X$ there is a $c$ such that $c\condi a\in Y$. From \eqref{eq:cond-order-preserving} we get that $c\condi a\leq c\condi b$ and since $Y$ is upward closed, $c\condi b\in Y$. In consequence $b\in\D{Y}{X}$, as required.

\smallskip 

Ad 3. From the previous proposition, we get that $D_{\filH}^{\condi}(\fil)$ is upward closed. Further, $1\in \fil$ and $1\condi 1=1\in \filH$.
Thus, $1\in D_{\filH}^{\condi}(\fil)$. 

To show that the set is closed under infima, take its elements $a$ and $b$. So there are $c_{1},c_{2}\in \fil$
such that $c_{1}\condi a,c_{2}\condi b\in \filH$. By the fact that $\filH$ is
a filter and by \eqref{eq:cond-conjunction-twice} we get that: $c_{1}\wedge c_{2}\condi a\wedge b\in \filH$.
As $c_{1}\wedge c_{2}\in \fil$, $a\wedge b\in D_{\filH}^{\condi}(\fil)$.

This concludes the proof.
\end{proof}

\begin{definition}\label{df:ultrafilter-hybrid-frame}
For a conditional algebra $\frA\defeq\langle A,\condi\rangle$ the \emph{ultrafilter frame} of $\frA$ is the structure $\Uf(\frA)\defeq\langle\Ul(A),T_A\rangle$,
with $T_A\subseteq\Ul(A)\times\mathcal{P}(\Ul(A))\times\Ul(A)$
such that
\begin{equation}\tag{$\mathrm{df}\,T_A$}\label{df:T_A}
T_A(\ult,Z,\ultV)\quad\text{iff}\quad (\exists\fil\in\Fi(A))(Z=\varphi(\fil)\wedge D_{\ult}^{\condi}(\fil)\subseteq\ultV)\,.
\end{equation}
Thus, $T_A$ holds between points and closed subsets of the Stone space of~$A$. \QED 
\end{definition}

Given a set $Y\subseteq \Ul(A)$ let
\[
T_A(\ult,Y)\defeq\{\ultV\in \Ul(A):T_A(\ult,Y,\ultV)\}\,.
\]
Since
\begin{equation}\label{eq:remark-T}
\begin{split}
    T_A(\ult,\varphi(\fil))&{}=\{\ultV\in \Ul(A):D_{\ult}^{\condi}(\fil)\subseteq\ultV\}\\
    &{}=\varphi(D_{\ult}^{\condi}(\fil))\,,
    \end{split}
\end{equation}
we get that
\begin{proposition}
$T_A(\ult,\varphi(\fil))$ is always a closed subset of the Stone space of~$A$.
\end{proposition}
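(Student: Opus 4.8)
The plan is to reduce the statement to a single fact about the Stone images of filters, since almost all of the required work has already been done. First I would invoke the identity \eqref{eq:remark-T}, which computes
\[
T_A(\ult,\varphi(\fil))=\varphi(D_{\ult}^{\condi}(\fil))\,.
\]
This immediately turns the proposition into the assertion that $\varphi(D_{\ult}^{\condi}(\fil))$ is a closed subset of the Stone space of $A$, so it suffices to understand the argument $D_{\ult}^{\condi}(\fil)$ and the behaviour of $\varphi$ on it.

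Next I would observe that $D_{\ult}^{\condi}(\fil)$ is in fact a filter. Since $\ult\in\Ul(A)$ is an ultrafilter, it is in particular a (proper) filter, so Lemma~\ref{lem:D-filter}(3) applies verbatim with $\filH\defeq\ult$, giving $D_{\ult}^{\condi}(\fil)\in\Fi(A)$. This is the step that silently carries the whole weight of the proof, as the filterhood was the nontrivial content established earlier.

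Finally I would use the general fact, recalled among the notational conventions, that for any filter $\scrG$ the set $\varphi(\scrG)=\{\ult\in\Ul(A):\scrG\subseteq\ult\}$ satisfies $\varphi(\scrG)=\bigcap\{\varphi(a):a\in\scrG\}$. As each $\varphi(a)$ is clopen, hence closed, and an arbitrary intersection of closed sets is closed, $\varphi(\scrG)$ is closed in the Stone topology. Taking $\scrG\defeq D_{\ult}^{\condi}(\fil)$ then yields the claim.

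I expect no genuine obstacle here: the substantive work resides in Lemma~\ref{lem:D-filter}(3) and in the derivation of \eqref{eq:remark-T}. The only point meriting a moment's care is recognizing that the ultrafilter $\ult$ occupies the role of the filter $\filH$ in Lemma~\ref{lem:D-filter}(3), so that the filterhood of $D_{\ult}^{\condi}(\fil)$—and therefore the closedness of its $\varphi$-image—is already at our disposal.
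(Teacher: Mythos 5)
Your proposal is correct and follows essentially the same route as the paper: the proposition is stated there as an immediate consequence of \eqref{eq:remark-T}, with the filterhood of $D_{\ult}^{\condi}(\fil)$ supplied by Lemma~\ref{lem:D-filter}(3) and closedness coming from the fact, recalled in the notational conventions, that $\varphi$ of a filter is the intersection $\bigcap\{\varphi(a):a\in D_{\ult}^{\condi}(\fil)\}$ of clopen sets. Your reconstruction makes explicit exactly the steps the paper leaves implicit, so there is nothing to correct.
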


The definition of the relation $T_A$ is strictly related to and motivated by the techniques and results from \citep{Celani-TDFBAWANNMO,Menchon-PhD,Celani-et-al-MDS}. The papers deal with monotone operators that are modeled by hybrid relations\footnote{These are called \emph{multirelations} in the aforementioned papers.} on its dual space, i.e., relations between ultrafilters and sets of ultrafilters. In particular, we get that, given a~Boolean algebra with a binary operator $f$ that is antitone in the first coordinate and isotone in the second, the  dual relation $R_f$ of $f$ is a set of triples in $\Ul(A)\times \power(\Ul(A))\times \power(\Ul(A))$ such that 
\begin{align*}
R_f(\ult,Z,Y)\quad\text{iff}\quad(\exists\fil\in\Fi(A))(\exists\ide\in\Id(A))&\,(Z=\varphi(\fil)\ \text{and}\ \\ 
Y=\varphi(\neg \ide)&\ \text{and}\ f^{-1}[\ult]\cap (\fil\times\ide)=\emptyset)
\end{align*}
where:
\begin{enumerate}\itemsep+2pt
    \item $\Id(A)$ is the set of ideals of $A$,
    \item $\neg \ide=\{\neg a:a\in\ide\}$, and
    \item $f^{-1}[\ult]=\{\klam{a,b}\in A^2:f(a,b)\in \ult\}$.
\end{enumerate}
Applying this to the particular case of the conditional operator, we first observe that 
\[
(\condi)^{-1}[\ult]\cap (\fil\times\ide)=\emptyset\quad\text{iff}\quad D^\condi_\ult(\fil)\cap \ide=\emptyset\,. 
\]
    Using Lemma~\ref{lem:D-filter} according to which $D^\condi_\ult(\fil)$ is a filter, we obtain that 
\[
 R_\condi(\ult,\varphi(\fil),\varphi(\neg\ide))\quad\text{iff}\quad (\exists\ultV\in\Ul(A))\,(D^\condi_\ult(\fil)\subseteq \ultV\ \text{and}\ \ultV\in \varphi(\neg\ide))\,. 
\]
In consequence, if $R_\condi(\ult,\varphi(\fil),\varphi(\neg\ide))$, then there exists an ultrafilter $\ultV$ extending $\neg\ide$ such that $R_\condi(\ult,\varphi(\fil),\{\ultV\})$.
So, replacing the singleton of $\ultV$ with $\ultV$ itself, we can simplify $R_\condi$ to the equivalent $T_A\subseteq \Ul(A)\times\power(\Ul(A))\times \Ul(A)$.

\subsection{Representation of conditional algebras}

\begin{definition}
Let $X$ be a set, and let $\calF_X$ be any family of subsets of $X$.  $\frX\defeq\langle X,T\rangle$ is a \emph{ternary hybrid frame} (abbr.\ \emph{t-frame}) if $T\subseteq X\times\calF_X\times X$. As earlier, let 
\[
T(x,Z)\defeq\{y\in X: T(x,Z,y)\}.
\]
\QED
\end{definition}

\begin{proposition}\label{prop:power-set-algebra}
If $\frX\defeq\langle X,T\rangle$ is a t-frame and  $\calF_X$ is a family of subsets of $X$ then the structure $\langle \calF_X,\Tcondi \rangle$ such that 
\begin{equation}\tag{$\dftt{\Tcondi}$}\label{df:Tcondi}
U\Tcondi V\defeq\{x\in X:(\forall Z\in\power(U)\cap\calF_X)\,T(x,Z)\subseteq V\} 
\end{equation}
is a conditional algebra.
\end{proposition}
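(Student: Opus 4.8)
The plan is to verify directly that $\Tcondi$ satisfies the three defining conditions \eqref{C1}, \eqref{C2}, and \eqref{C3} of Definition~\ref{df:conditional-algebra}, the underlying Boolean algebra being the power set $\langle\power(X),\cup,\cap,{}^c,\emptyset,X\rangle$, so that meet is intersection, join is union, the order is inclusion, and the top element $1$ is $X$. It is convenient to record the shape of membership: $x\in U\Tcondi V$ holds exactly when $T(x,Z)\subseteq V$ for every $Z\in\power(U)\cap\calF_X$. Two elementary observations drive the whole argument: first, that $T(x,Z)\subseteq X$ holds trivially, since $T(x,Z)$ is by definition a subset of $X$; and second, that $U\mapsto\power(U)\cap\calF_X$ is monotone with respect to inclusion.

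For \eqref{C1}, I would note that in the condition defining $U\Tcondi X$ the inclusion $T(x,Z)\subseteq X$ is automatic, so every $x\in X$ qualifies and $U\Tcondi X=X$. For \eqref{C2}, I would use that $T(x,Z)\subseteq V\cap W$ iff $T(x,Z)\subseteq V$ and $T(x,Z)\subseteq W$, and then distribute the universal quantifier over the conjunction:
\[
x\in U\Tcondi(V\cap W)\iff(\forall Z\in\power(U)\cap\calF_X)\bigl(T(x,Z)\subseteq V\tand T(x,Z)\subseteq W\bigr).
\]
The right-hand side is precisely the conjunction of $x\in U\Tcondi V$ and $x\in U\Tcondi W$, which gives the equality asserted by \eqref{C2}.

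The only step needing a moment's care is \eqref{C3}, where the monotonicity observation does the work. Since $\power(U)\subseteq\power(U\cup U')$ and $\power(U')\subseteq\power(U\cup U')$, we have $\power(U)\cap\calF_X\subseteq\power(U\cup U')\cap\calF_X$ and similarly for $U'$. Hence the universal condition defining membership in $(U\cup U')\Tcondi V$ quantifies over a larger collection of sets $Z$ than those defining $U\Tcondi V$ or $U'\Tcondi V$, so it is the logically stronger requirement. Consequently $x\in(U\cup U')\Tcondi V$ forces both $x\in U\Tcondi V$ and $x\in U'\Tcondi V$, which is exactly the inclusion $(U\cup U')\Tcondi V\subseteq(U\Tcondi V)\cap(U'\Tcondi V)$ required by \eqref{C3}.

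I do not anticipate a genuine obstacle; all three checks reduce to routine manipulations of set inclusions and quantifiers. The single point worth flagging is that \eqref{C3} is truly an inequality rather than an equality: it rests only on the monotonicity of $U\mapsto\power(U)$, which is not additive, so one should resist the temptation to aim for equality here. With \eqref{C1}--\eqref{C3} in hand, $\langle\power(X),\Tcondi\rangle$ is a conditional algebra.
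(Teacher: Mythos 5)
Your proof is correct and takes essentially the same route as the paper's: both verify \eqref{C1}--\eqref{C3} directly, with \eqref{C1} trivial, \eqref{C2} from distributing the universal quantifier over the intersection, and \eqref{C3} from the monotonicity of $U\mapsto\power(U)\cap\calF_X$ (the paper phrases these element-wise via triples $T(x,Z,y)$, but the content is identical). Your reading of the underlying Boolean algebra as $\power(X)$, despite the statement's wording $\langle\calF_X,\Tcondi\rangle$, agrees with the paper's own proof and its subsequent definition of $\Cm(\frX)=\langle\power(X),\Tcondi\rangle$, and your closing remark that \eqref{C3} cannot be strengthened to an equality is a sound observation.
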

\begin{proof}
\eqref{C1} Immediate.

\smallskip

\eqref{C2} Let $x\in (U\Tcondi V)\cap (U\Tcondi  W)$, $Z\in \mathcal{P}(U)\cap\calF_X$ and $y\in X$ be such that $T(x,Z,y)$. By assumption we get that $y\in V$ and $y\in W$, so $y\in V\cap W$. The converse is analogous. 

\eqref{C3} Let $x\in (U\cup V)\Tcondi W$, $Z\in \mathcal{P}(U)\cap\calF_X$ and $y\in X$ be such that $T(x,Z,y)$. Then, $Z\subseteq U\cup V$ and, by assumption, $y\in W$. The other inclusion is analogous.
\end{proof}
\begin{definition}
    Given a t-frame $\frX\defeq\langle X,T\rangle$, its \emph{full complex conditional algebra} is the structure $\Cm(\frX)\defeq\langle\power(X),\Tcondi\rangle$.\QED
\end{definition}

\begin{definition}
    Let $\frA\defeq\langle A,\condi\rangle\in\CA$. The full complex algebra of the ultrafilter frame $\Uf(\frA)$ is  \[
\Em(\frA)\defeq\Cm(\Uf(\frA))=\langle\mathcal{P}(\Ul(A)),\condi_{T_A}\rangle\,.
    \]
\end{definition}

\begin{lemma}\label{lem:existence-for-representation} Let $\frA\defeq\klam{A,\condi}\in\CA$. Let $\filH\in\Fi(A)$
and let $a,b\in A$. Then $a\condi b\in \filH$ if and only if for every filter
$\fil$ such that $a\in \fil$, $b\in D_{\filH}^{\condi}(\fil)$.

Consequently, $a\condi b\notin \filH$ if and only if there exist a filter $\fil$ and an ultrafilter
$\ult\in\Ul(A)$ such that $a\in \fil$, $D_{\filH}^{\condi}(\fil)\subseteq\ult$
and $b\notin\ult$.
\end{lemma}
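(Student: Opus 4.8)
The plan is to establish the biconditional first, and then derive the ``consequently'' clause from it by negation together with a standard ultrafilter–separation argument. The left-to-right direction of the biconditional is immediate from the definition of $\D{\filH}{\fil}$: assuming $a\condi b\in\filH$, if $\fil$ is any filter with $a\in\fil$, then $a$ itself witnesses that there is an element of $\fil$ whose conditional with $b$ lies in $\filH$, so $b\in\D{\filH}{\fil}$.

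For the converse direction I would specialize the hypothesis to the principal filter $\fil\defeq\upop a$. Since $a\in\upop a$, the assumption yields $b\in\D{\filH}{\upop a}$, i.e.\ there is some $c\in\upop a$ with $c\condi b\in\filH$. As $a\leq c$, antitonicity in the first coordinate, namely \eqref{eq:cond-order-reversing} from Lemma~\ref{lem:orden}, gives $c\condi b\leq a\condi b$; and since $\filH$ is upward closed as a filter, $a\condi b\in\filH$. This closes the biconditional.

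For the ``consequently'' clause I would negate the biconditional: $a\condi b\notin\filH$ holds exactly when there is a filter $\fil$ with $a\in\fil$ and $b\notin\D{\filH}{\fil}$. Now Lemma~\ref{lem:D-filter}(3) guarantees that $\D{\filH}{\fil}$ is itself a filter, and it is proper since it omits $b$; hence by the standard fact that a proper filter equals the intersection of the ultrafilters extending it, there is an ultrafilter $\ult$ with $\D{\filH}{\fil}\subseteq\ult$ and $b\notin\ult$, producing the required pair $\fil,\ult$. Conversely, if such $\fil$ and $\ult$ exist, then $\D{\filH}{\fil}\subseteq\ult$ together with $b\notin\ult$ forces $b\notin\D{\filH}{\fil}$, and the negated biconditional again yields $a\condi b\notin\filH$.

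The argument is essentially routine, with two points deserving care. The first is the choice of test filter in the converse direction: one must use the principal filter $\upop a$, because, being the smallest filter containing $a$, it is precisely what forces the witness $c$ to satisfy $a\leq c$, which in turn lets antitonicity deliver $c\condi b\leq a\condi b$; a larger filter containing $a$ would in general produce a witness unrelated to $a$. The second is that the passage from $b\notin\D{\filH}{\fil}$ to a separating ultrafilter relies on $\D{\filH}{\fil}$ being a filter, which is exactly the content of Lemma~\ref{lem:D-filter}(3).
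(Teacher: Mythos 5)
Your proof is correct and follows essentially the same route as the paper: the forward direction is definitional, and the converse specializes the hypothesis to the principal filter $\upop a$ and applies \eqref{eq:cond-order-reversing} together with upward closure of $\filH$, exactly as the paper does. The paper leaves the ``consequently'' clause implicit, and your completion of it---using Lemma~\ref{lem:D-filter}(3) to see that $\D{\filH}{\fil}$ is a proper filter omitting $b$ and then separating with an ultrafilter---is the intended standard argument.
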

\begin{proof}
($\Rarrow$) If $a\condi b\in \filH$ and $a\in\fil$, then $b\in\D{\filH}{\fil}$ by definition.

\smallskip 

($\Larrow$) Suppose that for any filter $\fil$ such that $a\in \fil$, $b\in D_{\filH}^{\condi}(\fil)$. In particular, $b\in\D{\filH}{\upop a}$, so there is $x\geq a$ such that $x\condi b\in\filH$. Since by \eqref{eq:cond-order-reversing} it is the case that $x\condi b\leq a\condi b$, we have that $a\condi b\in\filH$. 
\end{proof}

\begin{theorem}[Representation Theorem]\label{th:RT-for-CCA}
    For every $\frA\defeq\langle A,\condi\rangle\in\CA$, the Stone mapping $\varphi\colon A\to\power(\Ul(A))$ is an embedding of the algebra into $\Em(\frA)$.
\end{theorem}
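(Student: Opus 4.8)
The plan is to split the claim into the two defining requirements of an embedding. That $\varphi$ is an injective Boolean homomorphism is exactly the classical Stone representation theorem, so I take it for granted; all the content lies in verifying that $\varphi$ also transports the conditional, i.e.\ that
\[
\varphi(a\condi b)=\varphi(a)\condi_{T_A}\varphi(b)
\]
for all $a,b\in A$. I would establish this by unfolding both sides into a membership condition on an arbitrary ultrafilter $\ult$ and checking that the two conditions coincide.

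Unpacking the right-hand side via \eqref{df:Tcondi} (with the family of closed sets as $\calF_X$), the ultrafilter $\ult$ lies in $\varphi(a)\condi_{T_A}\varphi(b)$ precisely when $T_A(\ult,Z)\subseteq\varphi(b)$ for every closed $Z\subseteq\varphi(a)$. The first key step is to reparametrise this quantifier by filters: every closed set has the form $\varphi(\fil)$, and $\varphi(\fil)\subseteq\varphi(a)$ holds iff $a\in\fil$ (the nontrivial direction extends $\fil\cup\{\neg a\}$ to an ultrafilter avoiding $a$), so ranging over closed $Z\subseteq\varphi(a)$ is the same as ranging over filters $\fil$ with $a\in\fil$. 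Then \eqref{eq:remark-T} rewrites $T_A(\ult,\varphi(\fil))$ as $\varphi(\D{\ult}{\fil})$, and since $\D{\ult}{\fil}$ is a filter by Lemma~\ref{lem:D-filter}(3) the inclusion $\varphi(\D{\ult}{\fil})\subseteq\varphi(b)$ is equivalent to $b\in\D{\ult}{\fil}$. Hence
\[
\ult\in\varphi(a)\condi_{T_A}\varphi(b)\quad\text{iff}\quad (\forall\fil\in\Fi(A))\bigl(a\in\fil\Rightarrow b\in\D{\ult}{\fil}\bigr).
\]

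The second key step is to recognise the right-hand condition as the instance $\filH=\ult$ of Lemma~\ref{lem:existence-for-representation}, whose statement asserts exactly that $a\condi b\in\ult$ is equivalent to $b\in\D{\ult}{\fil}$ holding for every filter $\fil$ containing $a$. Combining the two equivalences gives $\ult\in\varphi(a)\condi_{T_A}\varphi(b)$ iff $a\condi b\in\ult$ iff $\ult\in\varphi(a\condi b)$, which is the sought identity.

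I expect the main obstacle to be the bookkeeping in the first step rather than any genuine difficulty: one must check that the ``$\forall$ closed $Z\subseteq\varphi(a)$'' governing the complex-algebra operation matches, without loss, the ``$\forall$ filter $\fil\ni a$'' of Lemma~\ref{lem:existence-for-representation}, using that $\fil\mapsto\varphi(\fil)$ and $Z\mapsto\fil_Z$ are mutually inverse between filters and closed subsets. Once this translation is in place the equivalence is immediate, so that in effect Lemma~\ref{lem:existence-for-representation} together with \eqref{eq:remark-T} have already done the heavy lifting and the theorem follows as their corollary.
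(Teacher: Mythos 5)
Your proof is correct and takes essentially the same approach as the paper's: both reduce membership of an ultrafilter $\ult$ in $\varphi(a)\condi_{T_A}\varphi(b)$ to the filter condition of Lemma~\ref{lem:existence-for-representation} (with $\filH=\ult$), using \eqref{eq:remark-T} together with the fact from Lemma~\ref{lem:D-filter} that $\D{\ult}{\fil}$ is a filter, and dismissing non-closed $Z$ as contributing vacuously. The only difference is presentational: you run a single chain of equivalences with the closed-set/filter translation made explicit, where the paper argues the two implications separately.
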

\begin{proof}
    ($\Rarrow$) Suppose $\ult\in\varphi(a)\condi_{T_A}\varphi(b)$, i.e., $(\forall Z\subseteq\varphi(a))\,T_A(\ult,Z)\subseteq\varphi(b)$. Let $\fil$ be a filter such that $a\in\fil$. Thus $\varphi(\fil)\subseteq\varphi(a)$ and so $T_A(\ult,\varphi(\fil))\subseteq\varphi(b)$. By \eqref{eq:remark-T} we obtain that $\varphi(\D{\ult}{\fil})\subseteq\varphi(b)$, which entails that $b\in\D{\ult}{\fil}$. But this, according to Lemma~\ref{lem:existence-for-representation} means that $a\condi b\in\ult$. 

    \smallskip

    ($\Larrow$) In this case assuming that $a\condi b\in\ult$ we have to show that
    \[
    (\forall Z\subseteq\varphi(a))\,T_A(\ult,Z)\subseteq\varphi(b)\,.
    \]
    If $Z\subseteq\varphi(a)$ is not closed, then $T_A(\ult,Z)=\emptyset$. So assume $Z=\varphi(\fil)$. Since $a\in\fil$ by assumption, we have by Lemma~\ref{lem:existence-for-representation} that $b\in\D{\ult}{\fil}$. So $\varphi(\D{\ult}{\fil})\subseteq\varphi(b)$, and $T_A(\ult,\varphi(\fil))\subseteq\varphi(b)$ by \eqref{eq:remark-T}.
\end{proof}

\subsection{Canonical extensions of the conditional operator} We will show that the operator $\TAcondi$ of $\Em(\frA)$ is indeed a~$\pi$-extension of the $\condi$ of $\frA\in\CA$ and in consequence $\Em(\frA)$ is the canonical extension of $\frA$. 
We will also prove that the $\pi$ and $\sigma$-extensions of $\condi$ are different. 

Let us recall that given a Boolean algebra $A$ with an $n$-ary operator $f\colon A^n\to A$ isotone at every projection $i\leqslant n$, its $\pi$-extension in the canonical extension $A^\sigma$ of $A$ is constructed along the following way. 
Let $o^n$ be the set of all sequences of length $n$ of open elements of $A^\sigma$ (i.e., those that are suprema of subsets of $A$). We will denote the elements of $A^n$ with the letter $\beta$. Then for every $\gamma\in(A^\sigma)^n$
\[
f^\pi(\gamma)\defeq
\bigwedge_{\alpha\in(\upop\gamma)\cap o^n}\bigvee_{\beta\leq\alpha}f(\beta)\,.\footnotemark
\]
\footnotetext{We refer the reader to \citep{Jonsson-et-al_BAWOI} for details.}
Since $\condi$ is an operator that is antitone in the first coordinate and isotone in the second, we will consider $A^\partial$, the dual algebra of $A$ (i.e., $A$ with the order reversed) and look at $\condi$ as the operator of the type $A^\partial\times A\to A$, and make use of the fact that $\left(A^\partial\right)^\sigma=(A^\sigma)^\partial$.

\begin{lemma}
    Let $\langle A,\condi\rangle\in \CA$ and let $\ult\in \Ul(A)$. If $Y\in \closed(\topos)$ and $O\in\topos$, then\/\textup{:}
    $\ult \in Y \TAcondi O$ iff there exist $a,b\in A$ such that $Y\subseteq \varphi(a)$, $\varphi(b)\subseteq O$ and $\ult\in \varphi(a)\TAcondi\varphi(b)$.
\end{lemma}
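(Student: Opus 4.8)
The plan is to prove the two implications separately: the backward direction is pure monotonicity, while the forward direction rests on a compactness argument in the Stone space together with the Representation Theorem.

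For ($\Leftarrow$), suppose $a,b\in A$ are as in the statement. Since $\Em(\frA)\in\CA$, Lemma~\ref{lem:orden} tells us that $\TAcondi$ is antitone in its first coordinate and isotone in its second. Hence from $Y\subseteq\varphi(a)$ and $\varphi(b)\subseteq O$ we get $\varphi(a)\TAcondi\varphi(b)\subseteq Y\TAcondi\varphi(b)\subseteq Y\TAcondi O$, and the hypothesis $\ult\in\varphi(a)\TAcondi\varphi(b)$ yields $\ult\in Y\TAcondi O$.

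For ($\Rightarrow$), I would first extract the governing containment. As $Y$ is closed, $Y=\varphi(\fil_Y)$, and since $Y\in\power(Y)\cap\closed(\topos)$, instantiating the definition of $\ult\in Y\TAcondi O$ at $Z=Y$ gives $T_A(\ult,Y)\subseteq O$. By \eqref{eq:remark-T} this reads $\varphi(\D{\ult}{\fil_Y})\subseteq O$, and because $\D{\ult}{\fil_Y}$ is a filter (Lemma~\ref{lem:D-filter}), we have $\varphi(\D{\ult}{\fil_Y})=\bigcap\{\varphi(d):d\in\D{\ult}{\fil_Y}\}$. Passing to complements, $O^c\subseteq\bigcup\{\varphi(d)^c:d\in\D{\ult}{\fil_Y}\}$.

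The key move is then compactness: $O^c$ is closed in the Boolean space $\Ul(A)$, hence compact, and each $\varphi(d)^c$ is clopen, so finitely many of them cover $O^c$, say for $d_1,\dots,d_n\in\D{\ult}{\fil_Y}$. Setting $b\defeq d_1\wedge\dots\wedge d_n$, the filter property gives $b\in\D{\ult}{\fil_Y}$, while $\varphi(b)=\bigcap_i\varphi(d_i)\subseteq O$. Unwinding $b\in\D{\ult}{\fil_Y}$ produces some $a\in\fil_Y$ with $a\condi b\in\ult$, and $a\in\fil_Y$ means precisely $Y\subseteq\varphi(a)$. Finally, the Representation Theorem (Theorem~\ref{th:RT-for-CCA}) gives $\varphi(a\condi b)=\varphi(a)\TAcondi\varphi(b)$, so $a\condi b\in\ult$ is equivalent to $\ult\in\varphi(a)\TAcondi\varphi(b)$; thus $a,b$ witness the right-hand side. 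I expect the compactness extraction of a single element $b$ from the possibly infinite filter $\D{\ult}{\fil_Y}$ to be the only real obstacle, the remaining steps being direct applications of the filter property, \eqref{eq:remark-T}, and the Representation Theorem.
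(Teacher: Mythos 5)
Your proof is correct, and its skeleton coincides with the paper's: the backward direction is the same monotonicity observation, and the forward direction reduces, in both cases, to producing a single $b\in\D{\ult}{\fil_Y}$ with $\varphi(b)\subseteq O$, after which one unwinds $b\in\D{\ult}{\fil_Y}$ to obtain $a\in\fil_Y$ with $a\condi b\in\ult$ and concludes via Theorem~\ref{th:RT-for-CCA}. The difference lies in the mechanism that produces $b$. The paper forms the ideal $\ide_O\defeq\{c\in A:\varphi(c)\subseteq O\}$, writes $O=\bigcup_{c\in\ide_O}\varphi(c)$, and shows $\D{\ult}{\fil_Y}\cap\ide_O\neq\emptyset$ by a filter--ideal separation argument: were the two disjoint, some ultrafilter $\ultV$ would extend $\D{\ult}{\fil_Y}$ while avoiding $\ide_O$, contradicting $\ultV\in T_A(\ult,Y)\subseteq O$. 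You instead pass to complements in $\varphi(\D{\ult}{\fil_Y})\subseteq O$ and invoke compactness of the closed set $O^c$: finitely many clopens $\varphi(d_i)^c$ cover it, and $b\defeq d_1\wedge\dots\wedge d_n$ remains in the filter by closure under finite meets. The two mechanisms are equivalent manifestations of the Boolean prime ideal principle (Stone-space compactness is its topological face), so neither is more general; yours is more topologically self-contained and avoids appealing to ultrafilter extensions at this step, while the paper's stays in the filter/ideal vocabulary used throughout and makes explicit the ideal $\ide_O$, which reappears later, e.g.\ in the description of the $\pi$-extension in \eqref{eq:pi-extension-of-condi}. Two harmless details in your write-up: when instantiating the definition at $Z=Y$, the relevant family of middle coordinates in the full complex algebra is all of $\power(\Ul(A))$, so the mention of $\closed(\topos)$ there is superfluous (though true); and in the degenerate case $O=\Ul(A)$ your finite subcover may be empty, which is covered by taking $b=1\in\D{\ult}{\fil_Y}$.
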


\begin{proof}
    Assume that $\ult\in Y \TAcondi O$. Let $\fil_Y\in \Fi(A)$ be the filter such that $Y=\varphi(\fil_Y)$. Consider the ideal $\ide_O\defeq\{a\in A: \varphi(a)\subseteq O\}$. Since $O$ is open, we have that ($\dagger$) $O=\bigcup_{a\in\ide_O}\varphi(a)$. We will prove that $\D{\ult}{\fil_Y}\cap\ide_O\neq \emptyset$. To this end, observe that any ultrafilter $\ultV$ extending $\D{\ult}{\fil_Y}$ intersects $\ide_O$. Indeed, if $\D{\ult}{\fil_Y}\subseteq\ultV$, then by the assumption and by \eqref{df:T_A} we get that $\ultV\in T_A(\ult,Y)\subseteq O$. In consequence from $(\dagger)$ it follows that $\ultV\cap \ide_O\neq\emptyset$. Thus, it cannot be the case that $\D{\ult}{\fil_Y}\cap\ide_O=\emptyset$, and so, there exists $b\in A$ such that $b\in\D{\ult}{\fil_Y}\cap\ide_O$. So we have that $\varphi(b)\subseteq O$ and there is an $a\in \fil_Y$ such that $a\rightarrowtriangle b\in \ult$. By Theorem \ref{th:RT-for-CCA}, it is the case that $\ult\in \varphi(a)\TAcondi \varphi(b)$.

    The converse implication follows immediately from \eqref{eq:cond-order-preserving} and \eqref{eq:cond-order-reversing}. 
\end{proof}

Using the fact that in the case $Y$ is not a closed subset of the Stone space of $A$, for any ultrafilter $\ult$, the set $T_A(\ult,Y)$ is empty, we obtain
\begin{proposition}
    For any subsets $U,V$ of ultrafilters of a conditional algebra $\klam{A,\condi}$\/\textup{:} $\ult\in U\TAcondi V$ iff for all closed subsets $Y$ of $U$, $T_A(\ult,Y)\subseteq V$.
\end{proposition}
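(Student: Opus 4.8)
The plan is to read the claim directly off the defining formula \eqref{df:Tcondi} of the complex-algebra operation, specialized to the ultrafilter frame $\Uf(\frA)$. For this frame the family $\calF_X$ of admissible second coordinates is all of $\power(\Ul(A))$, so since $U\subseteq\Ul(A)$ we have $\power(U)\cap\calF_X=\power(U)$, and unfolding \eqref{df:Tcondi} yields
\[
\ult\in U\TAcondi V\qtiff (\forall Z\subseteq U)\;T_A(\ult,Z)\subseteq V\,.
\]
The only discrepancy between this and the asserted equivalence is that the latter quantifies over \emph{closed} subsets $Y$ of $U$ rather than over all subsets. Hence the entire content of the proposition is that the non-closed subsets contribute nothing to the defining condition.

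First I would record the key fact, already isolated in the paragraph preceding the statement: if $Z\subseteq\Ul(A)$ is not closed, then $T_A(\ult,Z)=\emptyset$ for every $\ult$. This is immediate from \eqref{df:T_A}, since $T_A(\ult,Z,\ultV)$ can hold only when $Z=\varphi(\fil)$ for some $\fil\in\Fi(A)$, and every such $\varphi(\fil)$ is closed; thus for non-closed $Z$ there is no witnessing filter and the whole slice $T_A(\ult,Z)$ is empty.

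The forward implication ($\Rarrow$) is then immediate: if $\ult\in U\TAcondi V$, then $T_A(\ult,Z)\subseteq V$ holds for all $Z\subseteq U$, and in particular for all closed $Y\subseteq U$. For the converse ($\Larrow$), I would assume $T_A(\ult,Y)\subseteq V$ for every closed $Y\subseteq U$ and take an arbitrary $Z\subseteq U$; if $Z$ is closed the inclusion $T_A(\ult,Z)\subseteq V$ holds by hypothesis, while if $Z$ is not closed then $T_A(\ult,Z)=\emptyset\subseteq V$ by the recorded fact. Either way $T_A(\ult,Z)\subseteq V$, so $\ult\in U\TAcondi V$ by the unfolded definition.

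There is no real obstacle here: the argument is a one-line case split on whether $Z$ is closed, and the only point requiring care is the justification that $T_A(\ult,Z)$ collapses to the empty set off the closed sets, which is a direct consequence of the shape of the definition of $T_A$. The proposition is thus best viewed as a convenience reformulation that lets one ignore non-closed second coordinates when computing $\TAcondi$.
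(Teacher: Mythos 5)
Your proposal is correct and takes essentially the same route as the paper: the paper derives this proposition in a single line from precisely the fact you isolate, namely that $T_A(\ult,Z)=\emptyset$ whenever $Z$ is not closed (since by \eqref{df:T_A} the middle coordinate must be of the form $\varphi(\fil)$), so non-closed subsets of $U$ contribute vacuously to the defining condition \eqref{df:Tcondi}. Your write-up just makes explicit the case split and the unfolding of the definition that the paper leaves implicit.
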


\begin{lemma}
    Let $\klam{A,\condi}\in \CA$, $\ult\in \Ul(A)$, and $U,V\in \power(\Ul (A))$. Then, $\ult \in U \TAcondi V$ iff for all $Y\in \closed(\topos)$ and for all $O\in\topos$, if $Y\subseteq U$ and $V\subseteq O$, then $\ult\in Y \TAcondi O$.
\end{lemma}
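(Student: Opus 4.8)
The plan is to reduce both implications to the preceding Proposition, which characterizes membership in $U\TAcondi V$ by saying that $\ult\in U\TAcondi V$ holds exactly when $T_A(\ult,Y)\subseteq V$ for every closed $Y\subseteq U$, and to exploit the fact that $\Em(\frA)$ is itself a conditional algebra, so that the order properties of Lemma~\ref{lem:orden} apply to $\TAcondi$. The only genuinely non-formal ingredient will be a separation property of the Stone space.

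For the left-to-right direction I would argue purely by monotonicity. Since $\Em(\frA)=\Cm(\Uf(\frA))$ is a conditional algebra by Proposition~\ref{prop:power-set-algebra}, the order facts \eqref{eq:cond-order-preserving} and \eqref{eq:cond-order-reversing} hold for $\TAcondi$. Hence, whenever $Y\subseteq U$ and $V\subseteq O$, antitonicity in the first coordinate gives $U\TAcondi V\subseteq Y\TAcondi V$ and isotonicity in the second gives $Y\TAcondi V\subseteq Y\TAcondi O$, so $U\TAcondi V\subseteq Y\TAcondi O$. Thus $\ult\in U\TAcondi V$ immediately yields $\ult\in Y\TAcondi O$ for all such $Y$ and $O$; note that this half uses neither the closedness of $Y$ nor the openness of $O$.

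For the converse I would assume the right-hand side and fix an arbitrary closed $Y\subseteq U$; by the preceding Proposition it suffices to show $T_A(\ult,Y)\subseteq V$. So let $O\in\topos$ satisfy $V\subseteq O$. The hypothesis gives $\ult\in Y\TAcondi O$, and applying the preceding Proposition to $Y\TAcondi O$ with the closed subset $Y$ of itself yields $T_A(\ult,Y)\subseteq O$. Since $O$ was an arbitrary open superset of $V$, this shows $T_A(\ult,Y)\subseteq\bigcap\{O\in\topos:V\subseteq O\}$.

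The crux, and the step I expect to be the only real obstacle, is to identify this intersection with $V$ itself. This is where the topology of the Stone space enters: $\Ul(A)$ is Hausdorff, hence $T_1$, so for any $\ultV\notin V$ the set $\Ul(A)\setminus\{\ultV\}$ is an open superset of $V$ omitting $\ultV$. Consequently $\bigcap\{O\in\topos:V\subseteq O\}=V$ for \emph{every} subset $V\subseteq\Ul(A)$, closed or not, and we conclude $T_A(\ult,Y)\subseteq V$. As $Y$ was an arbitrary closed subset of $U$, the preceding Proposition delivers $\ult\in U\TAcondi V$. The point worth flagging is that one is tempted to require $V$ closed, in order to separate it from $\ultV$ by a clopen set; the $T_1$ separation by complements of singletons makes this assumption unnecessary.
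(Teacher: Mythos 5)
Your proof is correct and follows essentially the same route as the paper's: the forward direction is the same appeal to \eqref{eq:cond-order-preserving} and \eqref{eq:cond-order-reversing} for $\TAcondi$, and your key step in the converse---that $\bigcap\{O\in\topos: V\subseteq O\}=V$ because complements of singletons are open---is exactly the paper's trick, which picks a witness $\ultV\in T_A(\ult,Y)\setminus V$ and applies the hypothesis to the open set $\{\ultV\}^c$ to reach a contradiction. Your packaging of this as a $T_1$-separation fact stated once, rather than a proof by contradiction, is a purely cosmetic difference.
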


\begin{proof}
    ($\Rarrow$) Suppose that $\ult\in U \TAcondi V$. Let $Y\subseteq U$ and $V\subseteq O$. It follows from \eqref{eq:cond-order-preserving} and \eqref{eq:cond-order-reversing} that $\ult\in Y \TAcondi O$. 

    \smallskip
    
    ($\Larrow$) For the reverse implication, suppose that for all $Y\in \closed(\topos)$ and for all $O\in\topos$, if $Y\subseteq U$ and $V\subseteq O$, then $\ult\in Y \TAcondi O$. Assume that $\ult\notin U \TAcondi V$, i.e, there exists $Y\in \closed(\topos)$ such that $Y\subseteq U$ and $T_A(\ult,Y)\nsubseteq V$. Pick a $\ultV\in T_A(\ult,Y)$ such that $\ultV\notin V$. Clearly, $\{\ultV\}^c$ is an open set and $V\subseteq \{\ultV\}^c$. Thus, by the assumption, $\ult\in Y\TAcondi\{\ultV\}^c$ and so $\ultV\in T_A(\ult,Y)\subseteq \{\ultV\}^c$, which is a contradiction. Therefore $\ult\in U \TAcondi V$.
\end{proof}

Let $\downop^{\closed} U$ be the set of all closed subsets of $U$, and $\upop^{\topos} V$ the set of all open supsets of $V$. Let $\Pi\defeq\downop^{\closed} U\times \upop^{\topos} V$.  As a consequence of the previous theorems, we get that 
\begin{equation}\label{eq:pi-extension-of-condi}
   U\TAcondi V=\bigcap_{\klam{Y,O}\in\Pi}\left\{\bigcup_{\klam{a,b}\in \fil_Y\times\ide_O}\varphi(a\condi b)\right\}= U\rightarrowtriangle^{\pi}V\,.
\end{equation}

\medskip

\noindent where for every $Y\in \closed(\topos)$, $\fil_Y\subseteq A$ is the filter that satisfies $\varphi(\fil_Y)=Y$; and for every $O\in\topos$, $\ide_O$ is the ideal such that $O=\bigcup_{a\in\ide_O}\varphi(a)$.

As a corollary we obtain
\begin{theorem}\label{th:pi-extension}
    If $\frA\defeq\klam{A,\condi}$ is a conditional algebra, then the operation $\TAcondi$ of $\Em(\frA)$ is a $\pi$-extension of $\condi$, and so---by Proposition~\ref{prop:power-set-algebra}---the variety $\CA$ is closed under canonical extensions.
\end{theorem}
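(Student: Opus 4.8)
The plan is to obtain the theorem as a direct reading of the displayed identity \eqref{eq:pi-extension-of-condi}: that identity already asserts $U\TAcondi V = U\condi^{\pi}V$, so the substantive task is merely to confirm that its middle expression is a verbatim instance of the general $\pi$-extension formula recalled above, and then to cite Proposition~\ref{prop:power-set-algebra} for the closure statement.

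First I would unwind the definition of $f^{\pi}$ for $f=\condi$, viewed---thanks to \eqref{eq:cond-order-reversing} and \eqref{eq:cond-order-preserving}---as an operator $A^{\partial}\times A\to A$ isotone in both arguments, using $(A^{\partial})^{\sigma}=(A^{\sigma})^{\partial}$. For $\gamma=\klam{U,V}$ the outer meet of $f^{\pi}(\gamma)$ ranges over open elements $\klam{\alpha_1,\alpha_2}$ of $(A^{\sigma})^{\partial}\times A^{\sigma}$ lying above $\gamma$. The one point requiring care is the order reversal in the first coordinate: ``open and above $U$'' in the dual means ``closed and contained in $U$'' in $A^{\sigma}$, so $\alpha_1$ ranges exactly over $\downop^{\closed}U$, while ``open and above $V$'' in the second coordinate gives $\alpha_2\in\upop^{\topos}V$. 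Hence the outer meet is indexed precisely by $\Pi$. Writing $\alpha_1=Y$ and $\alpha_2=O$, the inner join runs over $\klam{\beta_1,\beta_2}\in A^2$ below $\klam{Y,O}$; in the dual first coordinate the condition on $\beta_1$ becomes $Y\subseteq\varphi(\beta_1)$, i.e.\ $\beta_1\in\fil_Y$, and in the second $\varphi(\beta_2)\subseteq O$, i.e.\ $\beta_2\in\ide_O$, so that $f(\beta)=\beta_1\condi\beta_2$ and the join becomes $\bigcup_{\klam{a,b}\in\fil_Y\times\ide_O}\varphi(a\condi b)$. This reproduces the middle term of \eqref{eq:pi-extension-of-condi} exactly, yielding $U\condi^{\pi}V=U\TAcondi V$ and hence the first assertion.

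For the closure of $\CA$ under canonical extensions I would argue as follows. The structure $\Em(\frA)=\Cm(\Uf(\frA))$ is the full complex algebra of the t-frame $\Uf(\frA)$, so by Proposition~\ref{prop:power-set-algebra} it is a conditional algebra. Having just identified $\Em(\frA)$ with the canonical extension $\klam{A^{\sigma},\condi^{\pi}}$ of $\frA$, we conclude that the canonical extension of an arbitrary member of $\CA$ again belongs to $\CA$. The only genuinely delicate step is the bookkeeping in the second paragraph---correctly transporting ``open/above'' through the order reversal of $A^{\partial}$ so that the first index set comes out as $\downop^{\closed}U$ and the $\beta$-indices align with $\fil_Y$ and $\ide_O$; everything else is a direct appeal to material already in place.
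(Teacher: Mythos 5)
Your proposal is correct and takes essentially the same route as the paper: the paper, too, obtains the theorem as an immediate corollary of \eqref{eq:pi-extension-of-condi}---itself distilled from the two preceding approximation lemmas together with Theorem~\ref{th:RT-for-CCA}---and of Proposition~\ref{prop:power-set-algebra} for the closure claim. The only difference is that you spell out the dual-order bookkeeping identifying the middle term of \eqref{eq:pi-extension-of-condi} with the abstract $f^{\pi}$ formula (closed elements of $(A^{\sigma})^{\partial}$ above $U$ becoming $\downop^{\closed}U$, the $\beta$-indices becoming $\fil_Y\times\ide_O$), a step the paper leaves implicit, and that bookkeeping is carried out correctly.
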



Let us now turn to---dual to the $\pi$-extension---the notion of a \emph{$\sigma$-extension} of an operator. In an abstract setting, we define it as
\[
f^\sigma(\gamma)\defeq\bigvee_{\alpha\in(\downop\gamma)\cap c^n}\bigwedge_{\alpha\leq\beta}f(\beta)\,.
\]
where this time $c^n$ is the Cartesian $n$-th power of the closed elements of $A^\sigma$ and $\beta$ is used to denote elements from $A^n$.

\enlargethispage{1cm}

In the particular case of the conditional studied here, in the first stage, we define it for pairs of open and closed subsets of the Stone space as
\[Z^c\condi^\sigma Y=\bigcap_{\klam{a,b}\in \ide_Z\times \fil_Y} \varphi(a\condi b)\]
where $\fil_Y\in \Fi(A)$ and $\ide_Z\in \Id(A)$ are such that $Z=\varphi(\neg \ide)$ and $Y=\varphi(\fil)$. In the second stage, we extend the definition to arbitrary pairs of sets of ultrafilters via
\[U\condi^\sigma V=\bigcup_{\klam{Z^c,Y}\in\Pi} Z^c\condi^\sigma Y\,,\]
where $\Pi\defeq\upop^\sigma U \times \downop^{\closed} V$. 

On the other hand, it is a consequence of the results from \citep{Menchon-PhD,Celani-et-al-MDS} that for  a conditional algebra $\frA\defeq\langle A,\condi\rangle$, the  dual relation $G_\condi$ of the operator $\condi$ that corresponds to its $\sigma$-extension is a set of triples in $\Ul(A)\times \power(\Ul(A))\times \power(\Ul(A))$ such that 
\begin{align*}
G_\condi(\ult,Z,Y)\quad\text{iff}\quad(\exists\ide\in\Id(A))(\exists\fil\in\Fi(A))&\,(Z=\varphi(\neg \ide)\ \text{and}\ \\ 
Y=\varphi(\fil)&\ \text{and}\ (\ide\times\fil)\subseteq (\condi)^{-1}[\ult]).
\end{align*}
Let $G_\condi(\ult)\defeq\{\klam{Z,Y}\mid G_\condi(u,Z,Y)\}$. In \citep{Menchon-PhD,Celani-et-al-MDS} it was proved that 
\[\ult\in U \condi^\sigma V\quad \text{iff}\quad (\exists\klam{Z, Y} \in G_\condi(\ult))\, (Z \cap U) \cup (Y \cap V
^c) = \emptyset\,.\]
Using the equivalence, we are going to show that
\begin{theorem}
If $\frA\defeq\klam{A,\condi}\in\CA$, then $\klam{\power(\Ul(A)),\condi^\sigma}\in\CA$, i.e., the variety of conditional algebras is closed under $\sigma$-extensions.
\end{theorem}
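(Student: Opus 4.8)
The plan is to check that $\condi^\sigma$ satisfies the three identities \eqref{C1}, \eqref{C2}, and \eqref{C3}, arguing throughout from the pointwise description recalled just above, which I read in the simplified form: $\ult\in U\condi^\sigma V$ holds precisely when there is a pair $\klam{Z,Y}=\klam{\varphi(\neg\ide),\varphi(\fil)}$ with $\ide\in\Id(A)$, $\fil\in\Fi(A)$,
\[(\ide\times\fil)\subseteq(\condi)^{-1}[\ult],\qquad U\cap Z=\emptyset,\qquad Y\subseteq V.\]
Two of the three laws, together with one inclusion of \eqref{C2}, are immediate monotonicity observations about this description, and the entire weight of the argument falls on the remaining inclusion of \eqref{C2}.

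For \eqref{C1} I would exhibit, for an arbitrary $\ult$, the witnessing pair coming from $\ide\defeq A$ and $\fil\defeq\{1\}$: here $\varphi(\neg A)=\varphi(A)=\emptyset$ and $\varphi(\{1\})=\Ul(A)$, while $(\ide\times\fil)\subseteq(\condi)^{-1}[\ult]$ reduces to $a\condi 1\in\ult$ for all $a$, which holds by \eqref{C1} in $\frA$. Since $U\cap\emptyset=\emptyset$ and $\Ul(A)\subseteq\Ul(A)$, this gives $\ult\in U\condi^\sigma\Ul(A)$, i.e.\ $U\condi^\sigma 1=1$. For \eqref{C3}, and for the easy inclusion of \eqref{C2}, no new witness is needed: if $\klam{Z,Y}$ witnesses $\ult\in(U\cup V)\condi^\sigma W$, then $(U\cup V)\cap Z=\emptyset$ forces both $U\cap Z=\emptyset$ and $V\cap Z=\emptyset$, so the same pair witnesses membership in $U\condi^\sigma W$ and in $V\condi^\sigma W$; dually, a witness for $U\condi^\sigma(V\cap W)$ has $Y\subseteq V\cap W$, hence $Y\subseteq V$ and $Y\subseteq W$, yielding $U\condi^\sigma(V\cap W)\subseteq(U\condi^\sigma V)\cap(U\condi^\sigma W)$.

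The substantial step is the reverse inclusion $(U\condi^\sigma V)\cap(U\condi^\sigma W)\subseteq U\condi^\sigma(V\cap W)$. Given $\ult$ in the left-hand side, I would take witnesses $\klam{\varphi(\neg\ide_1),\varphi(\fil_1)}$ and $\klam{\varphi(\neg\ide_2),\varphi(\fil_2)}$ for the two conjuncts and merge them into the single pair $\klam{\varphi(\neg\ide),\varphi(\fil)}$ with $\ide\defeq\ide_1\cap\ide_2$ and $\fil$ the filter generated by $\fil_1\cup\fil_2$. The duality bookkeeping is routine: $\ide$ is again an ideal, $\fil$ a filter, and by the standard Stone facts $\varphi(\neg(\ide_1\cap\ide_2))=\varphi(\neg\ide_1)\cup\varphi(\neg\ide_2)$ and $\varphi(\fil)=\varphi(\fil_1)\cap\varphi(\fil_2)$. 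From this, $U\cap\varphi(\neg\ide)=\emptyset$ is immediate, and $\varphi(\fil)\subseteq V\cap W$ follows from $\varphi(\fil_1)\subseteq V$ and $\varphi(\fil_2)\subseteq W$.

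The crux, and the step I expect to be the main obstacle, is verifying that the merged pair still lies in $G_\condi(\ult)$, i.e.\ that $(\ide\times\fil)\subseteq(\condi)^{-1}[\ult]$. For $a\in\ide_1\cap\ide_2$ and $b\in\fil$ I would write $b\geq b_1\wedge b_2$ with $b_i\in\fil_i$; then $a\condi b_1\in\ult$ and $a\condi b_2\in\ult$ by the two hypotheses, so $a\condi(b_1\wedge b_2)=(a\condi b_1)\wedge(a\condi b_2)\in\ult$ by \eqref{C2} in $\frA$, and finally $a\condi b\in\ult$ by \eqref{eq:cond-order-preserving} and upward closure of $\ult$. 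This produces the merged witness and hence $\ult\in U\condi^\sigma(V\cap W)$, completing \eqref{C2}. The whole difficulty is thus concentrated in amalgamating two $G_\condi(\ult)$-witnesses into one: the base-algebra law \eqref{C2} is exactly what merges the two consequents, and isotonicity \eqref{eq:cond-order-preserving} is what carries the argument from the generators $b_1\wedge b_2$ to an arbitrary element of the generated filter.
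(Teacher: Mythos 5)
Your proposal is correct and takes essentially the same route as the paper's own proof: the same witness pair built from $\ide\defeq A$ and $\fil\defeq\{1\}$ for \eqref{C1}, the same merged witness $\klam{\ide_1\cap\ide_2,\ \text{filter generated by }\fil_1\cup\fil_2}$ for the nontrivial inclusion of \eqref{C2} (verified exactly as in the paper, via \eqref{C2} in $\frA$ applied to generators $b_1\wedge b_2$ and then \eqref{eq:cond-order-preserving}), and the same reuse of a single witness for \eqref{C3} and the easy half of \eqref{C2}. The only difference is cosmetic: you spell out the Stone facts $\varphi(\neg(\ide_1\cap\ide_2))=\varphi(\neg\ide_1)\cup\varphi(\neg\ide_2)$ and $\varphi(\fil_3)=\varphi(\fil_1)\cap\varphi(\fil_2)$, which the paper leaves as ``easy to see.''
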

\begin{proof}
\eqref{C1} Let $U\subseteq \Ul(A)$. We will show that $U\condi^\sigma \Ul(A)=\Ul(A)$. Let $\ult\in \Ul(A)$. By \eqref{C1}, $A\times \{1\}\subseteq (\condi)^{-1}[\ult]$. Then, $G_\condi(\ult,\emptyset,\Ul(A))$ and it follows that $\ult\in U\condi^\sigma \Ul(A)$.

\smallskip

\eqref{C2} Let $U, V,W\subseteq \Ul(A)$. Suppose that $\ult\in (U\condi^\sigma V)\cap (U\condi^\sigma W)$. Then, there exists a pair $\klam{Z_1,Y_1}\in G_\condi(\ult)$ such that $(Z_1 \cap U) \cup (Y_1 \cap V^c) = \emptyset$ and there exists a pair $\klam{Z_2,Y_2}\in G_\condi(\ult)$ such that $(Z_2 \cap U) \cup (Y_2 \cap W^c) = \emptyset$. Let $\ide_1,\ide_2\in \Id(A)$ and $\fil_1,\fil_2\in \Fi(A)$ be such that $\varphi(\neg \ide_1)=Z_1$, $\varphi(\neg \ide_2)=Z_2$, $\varphi(\fil_1)=Y_1$ and $\varphi(\fil_2)=Y_2$. Let $\ide_3=\ide_1\cap \ide_2$ and let $\fil_3$ be the filter generated by $\fil_1\cup \fil_2$. First, we will show that $\ide_3\times \fil_3\subseteq (\condi)^{-1}[\ult]$. Let $a\in \ide_3$ and $b\in \fil_3$. So, there exist $c\in \fil_1$ and $d\in \fil_2$ such that $c\wedge d\leq b$. Then, $a\condi (c\wedge d)\leq a\condi b$. By \eqref{C2}, $a\condi (c\wedge d)=(a\condi c) \wedge (a\condi d)$ and by assumption $a\condi c\in \ide_1\times \fil_1\subseteq (\condi)^{-1}[\ult]$, $a\condi d\in \ide_2\times \fil_2\subseteq (\condi)^{-1}[\ult]$. It follows that $(a\condi c)\wedge (a\condi d)\in \ult$ and thus $a\condi b\in \ult$. So, $G_\condi(\ult,\varphi(\neg \ide_3),\varphi(\fil_3))$. It is easy to see that $U\cap \varphi(\neg \ide_3)=\emptyset$ and $\varphi(\fil_3)\subseteq V\cap W$. Therefore, $\ult\in U\condi (V\cap W)$. The converse direction follows immediately.

\smallskip

\eqref{C3} Let $U, V,W\subseteq \Ul(A)$. Suppose that $\ult\in (U\cup V)\condi^\sigma W$. Then, there exists a pair $\klam{Z,Y}\in G_\condi(\ult)$ such that $(Z \cap (U\cup V)) \cup (Y \cap W^c) = \emptyset$. It is easy to see that  $(Z \cap U) \cup (Y \cap W^c) = \emptyset$ and   $(Z \cap  V) \cup (Y \cap W^c) = \emptyset$. Therefore $u\in (U\condi^\sigma W)\cap (V\condi^\sigma W)$.
\end{proof}

It is a consequence of general properties of $\sigma$ and $\pi$-extensions that for any $U,V\in\power(\Ul(A))$
\[
U\condi^\sigma V\subseteq U\condi^\pi\pi V\,.
\]
The operator $\condi$ is \emph{smooth} if the other inclusion holds as well.\footnote{For details on smooth operators, see e.g., \citep{Gehrke-CEESAUM}.}

\begin{example}
    Let us show that there is a subvariety of $\CA$ in which $\condi$ is a smooth operator. To this end consider any Boolean algebra $A$ with a binary operator $\condi$ such that ($\dagger$) $a\condi b\defeq b$. It is clear that $\frA\defeq\klam{A,\condi}\in\CA$. For this algebra, we have $\D{\ult}{\fil}=\ult$, for any ultrafilter $\ult$ and any filter $\fil$. Indeed, if $a\in\D{\ult}{\fil}$, then there is $b\in\fil$ such that $b\condi a\in\ult$. By ($\dagger$), $a\in\ult$. If, on the other hand, $a\in\ult$, then $\one\condi a\in\ult$ and $\one\in\fil$, so $a\in\D{\ult}{\fil}$. So for any $\ult$ and $\fil$: $T_A(\ult,\varphi(\fil),\ult)$.
    
    Thanks to this we can prove that $U\TAcondi V=V$. Indeed, if $\ult\in U\TAcondi V$, then for any $Y\subseteq U$, $T_A(\ult,Y)\subseteq U$. In particular, $T_A(u,\emptyset)\subseteq V$, and $\ult\in T(\ult,\emptyset)$. Thus, $\ult\in V$. For the same reason, if $u\in V$, then $u\in U\TAcondi V$.

    To show $\condi$ is smooth, we have to prove that $U\condi^\pi V\subseteq U\condi^\sigma V$. Thus we assume that $\ult\in U\condi^\pi V$, which by Theorem~\ref{th:pi-extension} is equivalent to $\ult\in U\TAcondi V$. Fix $\ide\defeq A$ and $\fil\defeq\ult$. Thus $\ide\times\fil\subseteq(\condi)^{-1}[u]$. Further
    \[
        \varphi(\neg\ide)^c=\varphi(\neg A)^c=\varphi(A)^c=\emptyset^c=\Ul(A)\supseteq U\,.
    \]
    By the first step of the proof and by the assumption we have that $u\in V$, thus $\varphi(\fil)=\{u\}\subseteq V$. In consequence $u\in U\condi^\sigma V$, as required.
\end{example}

However, in general, we have
\begin{theorem}
    $\condi$ is not smooth in $\CA$.
\end{theorem}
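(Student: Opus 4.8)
The plan is to refute smoothness by a single counterexample. Since the general inclusion $U\condi^\sigma V\subseteq U\condi^\pi V$ holds for all $U,V\in\power(\Ul(A))$, it suffices to produce one $\frA\defeq\klam{A,\condi}\in\CA$, subsets $U,V$, and an ultrafilter $\ult$ with $\ult\in(U\condi^\pi V)\setminus(U\condi^\sigma V)$. Both memberships are already decidable by the two relational descriptions obtained above: on the $\pi$-side, $\ult\in U\condi^\pi V$ iff $T_A(\ult,Y)\subseteq V$ for every closed $Y\subseteq U$, equivalently $\varphi(\D{\ult}{\fil})\subseteq V$ for every $\fil\in\Fi(A)$ with $\varphi(\fil)\subseteq U$ (using \eqref{eq:remark-T} and Theorem~\ref{th:pi-extension}); on the $\sigma$-side, $\ult\in U\condi^\sigma V$ iff there exist $\ide\in\Id(A)$ and $\fil\in\Fi(A)$ with $(\ide\times\fil)\subseteq(\condi)^{-1}[\ult]$, $\varphi(\neg\ide)\cap U=\emptyset$ and $\varphi(\fil)\subseteq V$.

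First I would locate the source of the gap. In the second coordinate $\condi$ is meet-preserving by \eqref{C2}, hence multiplicative: for an operator of the form $a\condi b\defeq f(b)$ with $f$ meet-preserving one checks directly from the displayed $\pi$- and $\sigma$-formulas that $U\condi^\pi V=f^\pi(V)$ and $U\condi^\sigma V=f^\sigma(V)$, independently of $U$, and unary meet-preserving operators are smooth (cf.\ \citealp{Gehrke-CEESAUM}). Thus a non-smooth $\condi$ must exploit the \emph{first} coordinate, where \eqref{C3} only forces antitonicity---an inequality---so that $\condi$ is there merely monotone. This is precisely the regime in which $\sigma$- and $\pi$-extensions are known to diverge.

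This suggests the following construction. For any antitone map $k\colon A\to A$ the operation $a\condi b\defeq 1$ if $b=1$ and $a\condi b\defeq k(a)$ otherwise satisfies \eqref{C1}, \eqref{C2} and \eqref{C3} (the last reducing to $k(a\vee b)\leq k(a)\wedge k(b)$, which antitonicity gives), so $\klam{A,\condi}\in\CA$. Specialising to $k=\chi_K$ for a downward closed $K\subseteq A$ and to $V=\emptyset$, the two criteria collapse, uniformly in $\ult$, to purely order-topological conditions: since $\D{\ult}{\fil}$ equals $A$ when $\fil\cap K\neq\emptyset$ and equals $\{1\}$ otherwise, $\ult\in U\condi^\pi\emptyset$ iff $\fil\cap K\neq\emptyset$ for every $\fil\in\Fi(A)$ with $\varphi(\fil)\subseteq U$ (a finite-intersection condition on the closed subsets of $U$), whereas $\ult\in U\condi^\sigma\emptyset$ iff $U\subseteq\varphi(\ide)$ for some ideal $\ide\subseteq K$. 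As both are independent of $\ult$, it remains to find $K$ and $U$ such that every closed subset of $U$ lies inside a single $\varphi(a)$ with $a\in K$, yet no ideal contained in $K$ covers $U$; then $U\condi^\sigma\emptyset=\emptyset\subsetneq\Ul(A)=U\condi^\pi\emptyset$.

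The crux---and the only step that is not a mechanical translation---is realising this gap. It cannot occur when $K$ is itself an ideal, nor over a dual space with a single limit point: a direct computation rules out the finite--cofinite algebra, whose one point at infinity forces the covering of $U$ by members of $K$ to admit a directed (ideal) refinement, so that every such $\condi$ is smooth. The witness therefore has to be built over an infinite algebra whose Stone dual carries a genuinely \emph{non-directed} covering---one in which every compact part of $U$ is captured by a single $a\in K$ but the family $\{\varphi(a):a\in K\}$ admits no ideal refinement covering all of $U$ (equivalently, a first-coordinate operator whose dual multirelation in the sense of \citep{Menchon-PhD,Celani-et-al-MDS} fails to be closed). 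Producing such $K$ and $U$, and verifying the non-existence of a $\sigma$-witness at the chosen point, is where the real work lies; I expect a dual space with a two-level limit structure---a sequence of limit points accumulating at a further limit point---to be the natural setting.
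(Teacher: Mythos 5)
Your reduction is correct as far as it goes: the operation $a\condi b\defeq 1$ for $b=1$ and $a\condi b\defeq\chi_K(a)$ otherwise (with $K\subseteq A$ downward closed) does satisfy \eqref{C1}--\eqref{C3}, and with $V=\emptyset$ the two membership criteria do collapse, uniformly in $\ult$, to the conditions you state: the $\pi$-side holds at every point iff every closed $Y\subseteq U$ satisfies $\fil_Y\cap K\neq\emptyset$, and the $\sigma$-side holds at some (equivalently every) point iff some ideal $\ide\subseteq K$ satisfies $U\subseteq\bigcup_{a\in\ide}\varphi(a)$. (On the $\pi$-side one must in principle quantify over \emph{all} filters $\fil$ with $\varphi(\fil)\subseteq U$, not only the filters $\fil_Y$; but these coincide, because in a Boolean algebra every filter is the intersection of the ultrafilters extending it, so $\fil=\fil_{\varphi(\fil)}$.) The genuine gap is that the witnessing triple $(A,K,U)$ is never produced. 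The theorem is an existence claim; your argument converts it into another existence claim---find $K$ and $U$ such that (i) every closed subset of $U$ lies inside a single $\varphi(a)$ with $a\in K$, while (ii) no ideal contained in $K$ covers $U$---and then stops, explicitly deferring this step as ``where the real work lies.'' Nothing in the proposal certifies that the reduced problem is solvable, and it is not a routine verification: it is provably unsolvable whenever $U$ consists of principal ultrafilters, for then the closed subsets of $U$ are exactly its finite subsets, so (i) together with downward closure of $K$ puts every finite join of the corresponding atoms into $K$, and these finite joins form an ideal inside $K$ that covers $U$, so (ii) fails. In particular the carrier set used in the paper's own proof ($U=E$, the principal ultrafilters $\upop\{e\}$, $e\in\Even$, over $\power(\Nat)$) can never witness non-smoothness inside your framework.

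That said, the missing step can be carried out, and exactly where you guessed. Take $A$ to be the algebra of clopen subsets of the ordinal space $\omega^{2}+1$, identify $\Ul(A)$ with $\omega^{2}+1$, put $U\defeq\{\omega\cdot n: n\geq 1\}$ (whose closure adds only $\omega^{2}$, so its closed subsets are its finite subsets), and let $K$ consist of those clopen $a$ with $\omega^{2}\notin a$ such that for every $\omega\cdot n\in a$ the tail $\{\omega\cdot(n-1)+j : j\geq k\}\subseteq a$ witnessing clopenness starts at some $k>\max\{m:\omega\cdot m\in a\}$. This $K$ is downward closed; (i) holds because any finite set of limit points can be covered by a clopen whose tails all start beyond the largest index involved; and (ii) holds because an ideal $\ide\subseteq K$ covering $U$ would contain some $a\ni\omega$ with a fixed tail-start $t$, and, by directedness, also elements containing both $a$ and $\{\omega,\ldots,\omega\cdot m\}$ for arbitrarily large $m$, whose block-one tail would have to start simultaneously after $m$ and no later than $t$---impossible once $m\geq t$. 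So your strategy does yield a second, structurally different proof: it isolates the failure of smoothness in the antitone first coordinate, over a dual space with a two-level limit structure, whereas the paper simply exhibits a three-valued operator on $\power(\Nat)$ depending on both coordinates, takes $U=V=E$, and derives the contradiction from the fact that $E$ is not closed in $\Ul(\power(\Nat))$. But as submitted, with the witness absent, your proposal does not yet prove the theorem.
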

\begin{proof}[Proof by example]
Let us consider the Boolean algebra $\power(\Nat)$. We will denote the set of even numbers by $\Even$. Let us consider a binary operator $\condi$ on $\power(\Nat)$ defined by
\[
P\condi Q\defeq\begin{cases}
    \Nat & \text{ if }P\subseteq Q,\\
    \Even& \text{ if }P\nsubseteq Q\text{ and }\Even\nsubseteq P,\\
    \emptyset & \text{ otherwise.}
\end{cases}
\]
It is obvious that for any set $P$, $P\condi P=\Nat$. In consequence, for any $\ult\in\Ul(\power(\Nat))$ and any closed subset $Y$ of $\Ul(\power(\Nat))$, $\fil_Y\subseteq\D{\ult}{\fil_Y}$, and so $T_{\power(\Nat)}(\ult,Y)\subseteq Y$. Therefore $P\condi_{T_{\power(\Nat)}}P=\Ul(\power(\Nat))$, and by Theorem~\ref{th:pi-extension} it is the case that $P\condi^\pi P=\Ul(\power(\Nat))$.
 We will show that for the set $E\defeq\{\upop \{e\}:e\in \Even\}$, $E\condi^\sigma E\neq \Ul(\power(\Nat))$. In particular, we will prove that $\upop \{3\}\notin E\condi^\sigma E$. Suppose, to get a contradiction, that $\upop \{3\}\in E\condi^\sigma E$. Then, there are an ideal $\ide$ and a~filter $\fil$ of $\power(\Nat)$ such that
\begin{equation}\tag{$\dagger$}\label{eq:for-non-smooth}
\ide\times\fil\subseteq(\condi)^{-1}[\upop \{3\}]\tand (\varphi(\neg \ide) \cap E) \cup (\varphi(\fil) \cap E^c) = \emptyset\,.
\end{equation}

From the first conjunct of \eqref{eq:for-non-smooth}, we obtain that for all $P\in\ide$ and all $Q\in\fil$, $P\subseteq Q$. Indeed, pick an arbitrary pair $\klam{P,Q}\in\ide\times\fil$. It follows that $P\condi Q$ is in $\upop\{3\}$, that is $3\in P\condi Q$. By the definition of the operator, $P\condi Q=\Nat$ and it follows that $P\subseteq Q$.

From the second conjunct of \eqref{eq:for-non-smooth}, we get that $\varphi(\fil)\subseteq E\subseteq \varphi(\neg \ide)^c$ and it follows that $\fil\cap \ide\neq \emptyset$. So, from this and the previous paragraph stems the existence of $C\in \fil\cap\ide$ such that $\ide=\downop C$ and $\fil= \upop C$. Thus $\varphi(\upop C)\subseteq E\subseteq \varphi(\neg \downop C)^c=\varphi(\upop C)$, and so $E=\varphi(\upop C)$. But this is impossible because $E$ is not closed in $\Ul(\power(\Nat))$.

To conclude, $E\condi^\sigma E\neq \Ul(\power(\Nat))= E\condi^\pi E$.
\end{proof}

\section{Conditional algebras and multi-modal antitone algebras}

In the introduction, we observed after Chellas that there is a strong connection between the conditional binary operator and the relativized necessity operators. Let us study this connection in the algebraic setting and with the help of the results obtained so far.

\begin{definition}
    Suppose $A$ is a Boolean algebra and $B$ is 
    a~subalgebra of~$A$. By a \emph{multi-modal antitone algebra} we understand an algebra $\frM\defeq\langle A,\{\Box_b\}_{b\in B}\rangle$ such that for every $b\in B$, $\Box_b$ is a unary necessity operator satisfying
    \begin{align}
        \Box_b(\one)&{}=\one\,,\tag{M1}\label{M1}\\
        \Box_b(a\wedge c)&{}=\Box_b(a)\wedge\Box_b(c)\,,\tag{M2}\label{M2}
    \end{align}
    and 
    \begin{equation}
    \Box_{b_1\vee b_2}(a)\leq\Box_{b_1}(a)\wedge\Box_{b_2}(a)\,.\tag{M3}\label{M3}
\end{equation}
A multimodal antitone algebra $\frM\defeq\langle A,\{\Box_b\}_{b\in B}\rangle$ is \emph{full} if $B=A$. Let $\MMA$ be the class of all multi-modal antitone algebras.\footnote{We have not been able to determine if such algebras have been analyzed before. \cite{Duntsch-et-al-BAAFIS} in the context of information systems define somewhat similar algebras, each with a family of operators indexed by all subsets of a fixed set. They call such structures \emph{Boolean algebras with relative operators}. In such a case, the index set is always a complete BA, and \emph{a priori} does not have to be related to the domain of the algebra.}
\end{definition}
It is not hard to see that \eqref{M3} is equivalent to
    \begin{equation*}
        b_1\leq b_2\rarrow \Box_{b_2}(a)\leq\Box_{b_1}(a)\,.\tag{M3$^\ast$}\label{M3ast}
    \end{equation*}
Thus, the name `antitone' comes from the third axiom that postulates an order on the set of operators that is reversed with respect to the Boolean order on~$B$.

\begin{theorem}\label{th:CA-MMA-terme-equivalent}
    The classes of conditional algebras and full multi-modal antitone algebras are definitionally equivalent.
\end{theorem}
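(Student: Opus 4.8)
The plan is to exhibit the obvious Chellas translation as the witness of definitional equivalence. Given a conditional algebra $\frA\defeq\langle A,\condi\rangle$, I would define a family of unary operators by $\Box_b(a)\defeq b\condi a$ for each $b\in A$; conversely, given a full multi-modal antitone algebra $\frM\defeq\langle A,\{\Box_b\}_{b\in A}\rangle$, I would define a binary operation by $a\condi b\defeq\Box_a(b)$. Because we are in the \emph{full} case, where the operators are indexed by all of $A$, both assignments range over the entire algebra and so are well-formed; this is precisely where fullness is used.

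First I would check that the first translation carries $\CA$ into $\MMA$. Under $\Box_b(a)=b\condi a$, axiom \eqref{M1} reads $b\condi\one=\one$, which is exactly \eqref{C1}; axiom \eqref{M2} reads $b\condi(a\wedge c)=(b\condi a)\wedge(b\condi c)$, which is \eqref{C2}; and \eqref{M3} reads $(b_1\vee b_2)\condi a\leq(b_1\condi a)\wedge(b_2\condi a)$, which is \eqref{C3}. Each axiom is a literal transcription. Symmetrically, under $a\condi b=\Box_a(b)$ the axiom \eqref{C1} becomes $\Box_a(\one)=\one$, i.e.\ \eqref{M1}; \eqref{C2} becomes $\Box_a(b)\wedge\Box_a(c)=\Box_a(b\wedge c)$, i.e.\ \eqref{M2}; and \eqref{C3} becomes $\Box_{a\vee b}(c)\leq\Box_a(c)\wedge\Box_b(c)$, i.e.\ \eqref{M3}. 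Hence the reverse translation carries full $\MMA$ into $\CA$.

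Finally I would verify that the two assignments are mutually inverse. Starting from $\condi$ and forming $\Box_b(a)=b\condi a$, the induced conditional satisfies $a\condi' b=\Box_a(b)=a\condi b$, recovering $\condi$; starting from $\{\Box_b\}_{b\in A}$ and forming $a\condi b=\Box_a(b)$, the induced operators satisfy $\Box'_b(a)=b\condi a=\Box_b(a)$, recovering the whole family. Thus the operations of each variety are term-definable from those of the other, and the two definitions compose to the identity, which is exactly what definitional equivalence requires.

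There is no genuine obstacle here: all the content lies in choosing the correct translation dictionary, after which every axiom on one side is a verbatim rewriting of the corresponding axiom on the other and the inversion is immediate. The only point deserving explicit comment is the role of \emph{fullness}: the requirement $B=A$ is what guarantees that $\Box_a$ is available for \emph{every} $a\in A$, matching the fact that the first argument of $\condi$ ranges over all of $A$. Without fullness one would obtain only a correspondence between $\condi$ and a reduct, not a term equivalence, which is why the theorem is stated for full multi-modal antitone algebras.
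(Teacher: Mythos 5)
Your proposal is correct and follows essentially the same route as the paper: the same translation dictionary $\Box_a(b)\defeq a\condi b$ and $a\condi b\defeq\Box_a(b)$, with \eqref{C1}--\eqref{C3} corresponding verbatim to \eqref{M1}--\eqref{M3}. In fact you spell out more detail than the paper does (the axiom-by-axiom check, the mutual inversion, and the role of fullness), where the paper simply appeals to ``the results above'' and calls the verification routine.
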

\begin{proof}
    Every conditional algebra $\langle A,\condi\rangle$ can be expanded to  $\langle A,\condi,\{\Box_a\}_{a\in A}\rangle$ by means of the following definition
    \[
    \Box_a(b)\defeq a\condi b\,.
    \]
    From the results above it follows that every $\Box_a$ is a~necessity operator and that the expansion satisfies \eqref{M3}.

    \smallskip

    If we start with the class of full MMAs, then every element of it can be expanded to the algebra $\langle A,\condi,\{\Box_a\}_{a\in A}\rangle$ via
    \[
        a\condi b\defeq\Box_a(b)\,.
    \]
    It is routine to check that the conditional operator satisfies the axioms \eqref{C1}--\eqref{C3}.
\end{proof}

    \pagebreak

We will show now that 
\begin{theorem}\label{th:MMA-closed-for-CE}
The class of multi-modal antitone algebras is closed under canonical extensions. 
\end{theorem}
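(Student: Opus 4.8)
The plan is to reduce the closure of $\MMA$ under canonical extensions to the already-established closure of $\CA$ (Theorem~\ref{th:pi-extension}), exploiting the definitional equivalence of Theorem~\ref{th:CA-MMA-terme-equivalent}. The subtlety is that a general $\frM\defeq\langle A,\{\Box_b\}_{b\in B}\rangle$ is \emph{full} only when $B=A$, whereas Theorem~\ref{th:CA-MMA-terme-equivalent} speaks exclusively about \emph{full} MMAs. So the argument has two layers: first handle the full case by transport along the equivalence, then descend to the general case where the index algebra $B$ is a proper subalgebra.

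For the full case, let $\frM=\langle A,\{\Box_a\}_{a\in A}\rangle$ be a full MMA and let $\langle A,\condi\rangle$ be the corresponding conditional algebra given by $a\condi b\defeq\Box_a(b)$. By Theorem~\ref{th:pi-extension}, the $\pi$-extension $\TAcondi$ on $\power(\Ul(A))$ again satisfies \eqref{C1}--\eqref{C3}, i.e.\ $\langle\power(\Ul(A)),\TAcondi\rangle\in\CA$. Translating back via the equivalence, the operators $\Box_U(V)\defeq U\TAcondi V$ (for $U\in\power(\Ul(A))=A^\sigma$) satisfy \eqref{M1}--\eqref{M3}, so $\langle A^\sigma,\{\Box_U\}_{U\in A^\sigma}\rangle$ is a full MMA. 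What must still be checked is that this is genuinely the canonical extension \emph{of} $\frM$: one verifies that for each fixed $a\in A$ the unary operator $\Box_a^\sigma$ obtained by extending the single modality $\Box_a\colon A\to A$ in the ordinary (unary) sense coincides with $V\mapsto\varphi(a)\TAcondi V$. This is essentially the observation that the $\pi$-extension of a binary operator, restricted to a fixed first coordinate ranging over the \emph{closed} element $\varphi(a)$, reduces to the unary $\pi$-extension of $\Box_a$; the second coordinate of $\condi$ is isotone, so $\Box_a^\pi=\Box_a^\sigma$ is the standard canonical extension of a normal modal operator, and it matches $\varphi(a)\TAcondi(-)$ by the displayed formula~\eqref{eq:pi-extension-of-condi} with $Y=\varphi(a)$ held fixed.

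For the general case $B\subsetneq A$, I would form the canonical extension $A^\sigma=\power(\Ul(A))$ and let $B^\sigma$ be the canonical extension of the subalgebra $B$; concretely $B^\sigma$ is the closure of $\{\varphi_B(b):b\in B\}$ under arbitrary joins and meets inside $A^\sigma$, and one uses the standard fact that the canonical extension of a subalgebra embeds into the canonical extension of the algebra. The index set of the extended structure is $B^\sigma$, and for $\beta\in B^\sigma$ the operator $\Box_\beta^\sigma$ is the extension of the family $\{\Box_b\}_{b\in B}$ viewed as a single binary operator $B\times A\to A$ that is antitone in the first and isotone in the second coordinate—exactly the signature handled by the $\pi$-extension machinery. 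The axioms \eqref{M1} and \eqref{M2} are preserved because each $\Box_\beta^\sigma$ is, for fixed $\beta$, a unary $\pi$-extended operator and hence normal; \eqref{M3} (equivalently \eqref{M3ast}) transfers because antitonicity in the first coordinate is an inequality between operators and such inequalities are preserved under the $\pi$-extension, just as \eqref{C3} was shown to be preserved in Theorem~\ref{th:pi-extension}.

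The main obstacle I anticipate is bookkeeping around the index algebra rather than any single hard estimate: one must be careful that the extended modalities are indexed by $B^\sigma$ (not by all of $A^\sigma$) and that the preservation of \eqref{M3} is argued at the level of the \emph{whole} family simultaneously, since \eqref{M3} relates operators with different indices $b_1,b_2,b_1\vee b_2$. The clean way to sidestep this is to package $\{\Box_b\}_{b\in B}$ as the single binary operator on $B\times A$ noted above, invoke the $\pi$-extension results for binary antitone/isotone operators verbatim, and only afterwards re-index by $B^\sigma$; this makes \eqref{M3} a direct instance of the preserved inequality \eqref{C3}. With that packaging in place, the full case is the special instance $B=A$, and the verification that the construction yields the canonical extension of $\frM$ (and not merely \emph{some} MMA on $A^\sigma$) is the only genuinely technical point, handled by matching $\Box_\beta^\sigma$ with the unary canonical extension of each fixed modality as described above.
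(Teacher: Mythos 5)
Your reduction does not go through as stated, and the gap sits precisely where this theorem differs from Theorem~\ref{th:pi-extension}. First, the object you build is not the canonical extension of $\frM$. By the paper's definition, $\Em(\frM)=\klam{\power(\Ul(A)),\{[Q_b]\}_{b\in B}}$ keeps the \emph{same} index family $B$ (each $\Box_b$ is extended separately via its dual relation $Q_b$); the paper even stresses that canonical extensions of MMAs need not be full. Your full-case construction produces the full MMA $\klam{\power(\Ul(A)),\{\Box_U\}_{U\in\power(\Ul(A))}}$, and your general-case construction one indexed by $B^\sigma$, so in both cases you must pass to the subfamily indexed by $\varphi[B]$ and prove that, for each $b\in B$, the operator your construction attaches to $\varphi(b)$ coincides with the unary canonical extension $[Q_b]$ of $\Box_b$. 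That identification is not bookkeeping: it is exactly part (1) of the theorem the paper proves \emph{after} this one (namely $[Q_a]=\Box_{\varphi(a)}$), whose proof requires Lemma~\ref{lem:Box-and-D}, Lemma~\ref{lem:props-for-Box} and \eqref{eq:for-th-CA-MMA}. So your reduction front-loads the hardest content of the whole section into a step you only sketch.

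Second, and more seriously, in the non-full case the step ``invoke the $\pi$-extension results for binary antitone/isotone operators verbatim'' fails: Theorem~\ref{th:pi-extension}, and the entire apparatus behind it ($\D{X}{Y}$, the relation $T_A$, ultrafilter frames), is developed for a conditional operator $\condi\colon A\times A\to A$ satisfying \eqref{C1}--\eqref{C3} on a \emph{single} algebra. Your packaged map has type $B\times A\to A$ with $B$ a proper subalgebra, which is outside the scope of every result in the paper; you would have to rebuild the theory for such two-sorted maps (or import a general theory of canonical extensions of maps between different algebras), which amounts to a new proof, not a reduction. The irony is that the packaging is unnecessary: the paper's actual proof extends each $\Box_b$ individually to $[Q_b]$, for which \eqref{M1} and \eqref{M2} are standard, and \eqref{M3}---despite relating different indices $b_1$, $b_2$, $b_1\vee b_2$---follows pairwise from Lemma~\ref{lem:order-on-Qb} (equivalently, from the fact that the pointwise inequality $\Box_{b_1\vee b_2}\leq\Box_{b_1}\wedge\Box_{b_2}$ between monotone unary maps is preserved by their extensions). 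No detour through $\CA$, and no two-sorted machinery, is needed; your worry that \eqref{M3} must be handled ``at the level of the whole family simultaneously'' is what led you into the gap.
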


For a fixed algebra $\frM\in\MMA$, its ultrafilter frame is the structure
\[
\Uf(\frM)\defeq\langle\Ul(A),\{Q_b\}_{b\in B}\rangle 
\]
such that for every $b\in B$
        \[
            Q_b(u,v)\iffdef\Box_{b}^{-1}[u]\subseteq v\,.
        \]
Let
\[
Q_b(u)\defeq\{v\in\Ul(A)\mid Q_b(u,v)\}\,.
\]
\begin{lemma}\label{lem:order-on-Qb}
    For any $b,c\in B$ and $u\in\Ul(A)$ we have
    \[
        b\leq c\rarrow Q_b(u)\subseteq Q_c(u)\,.
    \]
\end{lemma}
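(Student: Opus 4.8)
The plan is to unwind the definitions of $Q_b$ and $Q_c$ and reduce the statement to the antitonicity of the family $\{\Box_b\}_{b\in B}$ recorded in \eqref{M3ast}. First I would feed the hypothesis $b\leq c$ directly into \eqref{M3ast} to obtain $\Box_c(a)\leq\Box_b(a)$ for every $a\in A$. The key consequence I want to extract is the \emph{reversed} inclusion of preimages $\Box_c^{-1}[u]\subseteq\Box_b^{-1}[u]$: if $a\in\Box_c^{-1}[u]$, that is $\Box_c(a)\in u$, then since $\Box_c(a)\leq\Box_b(a)$ and $u$ is upward closed (being a filter), also $\Box_b(a)\in u$, so $a\in\Box_b^{-1}[u]$.

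With this inclusion in hand the lemma is immediate. Let $v\in Q_b(u)$; by the definition of $Q_b$ this means $\Box_b^{-1}[u]\subseteq v$. Composing with the preimage inclusion above yields
\[
\Box_c^{-1}[u]\subseteq\Box_b^{-1}[u]\subseteq v\,,
\]
which is exactly the defining condition $\Box_c^{-1}[u]\subseteq v$ for $Q_c(u,v)$, so $v\in Q_c(u)$, and hence $Q_b(u)\subseteq Q_c(u)$. There is no genuine obstacle here: the single substantive ingredient is the order-reversal \eqref{M3ast}, which converts $b\leq c$ into $\Box_c(a)\leq\Box_b(a)$; everything else is bookkeeping with the upward closure of the ultrafilter $u$ and the transitivity of set inclusion.
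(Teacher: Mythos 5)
Your proposal is correct and follows essentially the same route as the paper's own proof: both arguments use the antitonicity \eqref{M3ast} to convert $b\leq c$ into the reversed preimage inclusion $\Box_c^{-1}[u]\subseteq\Box_b^{-1}[u]$ (via upward closure of the ultrafilter $u$), and then chain this with $\Box_b^{-1}[u]\subseteq v$ to conclude $v\in Q_c(u)$. The only cosmetic difference is that you isolate the preimage inclusion as an explicit intermediate step, whereas the paper folds it into a single pass over an arbitrary element $d\in\Box_c^{-1}[u]$.
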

\begin{proof}
    Let $b\leq c$. If $v\in Q_b(u)$, then $\Box_b^{-1}[u]\subseteq v$. So, if $d\in \Box_c^{-1}[u]$, then $\Box_c(d)\in u$, and so $\Box_b(d)\in u$, i.e., $d\in\Box_b^{-1}[u]$, and thus $d\in v$, as required.
\end{proof}

The canonical extension of $\frM$ is the complex algebra of $\Uf(\frM)$ 
\[\Em(\frM)\defeq\Cm(\Uf(\frM))=\langle\power(\Ul(A)),\{[Q_b]\}_{b\in B}\rangle\,\] 
where
\[
[Q_b]\colon\power(\Ul(A))\to \power(\Ul(A)) 
\]
is standardly defined as
\[
[Q_b](X)\defeq\{u\in\Ul(A)\mid Q_b(u)\subseteq X\}\,.
\]

\begin{lemma}\label{lem:Box-to-Q-injection}
    For any $b,c\in B$, if $\Box_b\neq\Box_c$, then $[Q_b]\neq [Q_c]$.
\end{lemma}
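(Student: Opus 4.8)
The plan is to prove the contrapositive: assuming $[Q_b]=[Q_c]$, I will show that $\Box_b=\Box_c$. The natural bridge between the operators $\Box_b$ on $A$ and their complex-algebra counterparts $[Q_b]$ is the Stone embedding $\varphi$, so my first step is to establish a representation identity relating $\Box_b(a)$ to $[Q_b](\varphi(a))$. Concretely, I expect that for every $a\in A$ and every $b\in B$,
\[
\varphi(\Box_b(a))=[Q_b](\varphi(a))\,,
\]
which is the single-modality analogue of the Representation Theorem~\ref{th:RT-for-CCA}. The forward inclusion is routine: if $\Box_b(a)\in u$ then for any $v\in Q_b(u)$ we have $\Box_b^{-1}[u]\subseteq v$, and since $a\in\Box_b^{-1}[u]$ we get $a\in v$, so $v\in\varphi(a)$; hence $Q_b(u)\subseteq\varphi(a)$, i.e.\ $u\in[Q_b](\varphi(a))$.

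For the reverse inclusion, the key step is the standard ultrafilter existence argument: if $\Box_b(a)\notin u$, I must produce an ultrafilter $v$ with $\Box_b^{-1}[u]\subseteq v$ yet $a\notin v$. This reduces to checking that $\Box_b^{-1}[u]$ is a filter not containing $a$, so that $\Box_b^{-1}[u]\cup\{\neg a\}$ extends to an ultrafilter $v$ by the ultrafilter theorem. That $\Box_b^{-1}[u]$ is a filter is immediate from \eqref{M1} and \eqref{M2} together with the isotonicity of $\Box_b$ (which follows from \eqref{M2}), and $a\notin\Box_b^{-1}[u]$ is exactly the hypothesis $\Box_b(a)\notin u$. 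Then $v\in Q_b(u)$ but $v\notin\varphi(a)$, witnessing $u\notin[Q_b](\varphi(a))$. This establishes the representation identity.

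With the identity in hand, the conclusion is a short deduction. Suppose $[Q_b]=[Q_c]$. Then for every $a\in A$ we have $\varphi(\Box_b(a))=[Q_b](\varphi(a))=[Q_c](\varphi(a))=\varphi(\Box_c(a))$. Since $\varphi$ is injective (the Stone map is an embedding of Boolean algebras), this forces $\Box_b(a)=\Box_c(a)$ for all $a$, i.e.\ $\Box_b=\Box_c$. Taking the contrapositive yields: if $\Box_b\neq\Box_c$, then $[Q_b]\neq[Q_c]$, as claimed.

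I expect the main obstacle to be the reverse inclusion of the representation identity, specifically the verification that $\Box_b^{-1}[u]$ is a proper filter disjoint from a suitable witness, since this is where the modal axioms \eqref{M1} and \eqref{M2} and the ultrafilter theorem all come into play; the rest is formal. One should also note that this lemma only concerns a single fixed modality at a time, so the interaction axiom \eqref{M3} plays no role here—it is \eqref{M1} and \eqref{M2} alone that drive the argument.
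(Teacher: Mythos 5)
Your proof is correct, but it takes a different route from the paper's. The paper argues directly: it picks $x$ with (WLOG) $\Box_b(x)\nleq\Box_c(x)$, chooses an ultrafilter $u$ containing $\Box_b(x)\wedge\neg\Box_c(x)$, extends $\Box_c^{-1}[u]\cup\{\neg x\}$ to an ultrafilter $w$, and then uses the \emph{closed} set $\varphi(\Box_b^{-1}[u])$ (the set of ultrafilters extending the filter $\Box_b^{-1}[u]$) as the separating test set, on which $u\in[Q_b](\varphi(\Box_b^{-1}[u]))$ but $u\notin[Q_c](\varphi(\Box_b^{-1}[u]))$. You instead first establish the representation identity $\varphi(\Box_b(a))=[Q_b](\varphi(a))$ --- which is precisely the paper's Lemma~\ref{lem:MMA-embedding}, proved immediately \emph{after} this one --- and then obtain the claim from injectivity of the Stone map; your ultrafilter-extension step (extending $\Box_b^{-1}[u]\cup\{\neg a\}$ using \eqref{M1}, \eqref{M2} and the observation that $\Box_b^{-1}[u]$ is a filter) is the same core move as the paper's, just packaged inside the embedding lemma. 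There is no circularity in your arrangement, since you prove the identity from scratch and the paper's proof of Lemma~\ref{lem:MMA-embedding} does not use Lemma~\ref{lem:Box-to-Q-injection}. What your route buys: the two operators are separated on a \emph{clopen} set $\varphi(x)$ rather than merely a closed one, and the lemma becomes a one-line corollary of the embedding result, which is arguably a tidier organization. What the paper's route buys: it is self-contained at that point in the text and exhibits an explicit separating set without needing the full embedding statement. Your closing remark that \eqref{M3} plays no role is also accurate for both arguments.
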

\begin{proof}
Suppose there is an $x\in A$ such that $\Box_b(x)\neq\Box_c(x)$. Without the loss of generality assume that $\Box_b(x)\nleq\Box_c(x)$, i.e., $\Box_b(x)\cdot\neg\Box_c(x)\neq\zero$. Let $u$ be an ultrafilter that contains $\Box_b(x)\cdot\neg\Box_c(x)$. In consequence,
\[
x\in\Box_b^{-1}[u]\quad\text{and}\quad x\notin\Box_c^{-1}[u]\,.
\]
Since $\Box_c^{-1}[u]$ is a filter that does not have $x$ among its elements, there is an ultrafilter $w$ such that $\Box_c^{-1}[u]\cup\{\neg x\}\subseteq w$. Therefore, $w\notin\varphi(\Box_b^{-1}[u])$, and so $Q_c(u)\nsubseteq\varphi(\Box_b^{-1}[u])$. So $u\notin[Q_c](\varphi(\Box_b^{-1}[u]))$, but $u\in [Q_b](\varphi(\Box_b^{-1}[u]))$, and therefore $[Q_b]\neq[Q_c]$, as required.
\end{proof}

\begin{lemma}\label{lem:MMA-embedding}
    The Stone mapping $\varphi\colon A\to\power(\Ul(A))$ is an embedding of $\frM$ into $\Em(\frM)$, i.e.,
        \[\varphi(\Box_b(a))=[Q_b](\varphi(a))\,.\]    
\end{lemma}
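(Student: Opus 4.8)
The plan is to reduce the claim to verifying the single displayed identity $\varphi(\Box_b(a))=[Q_b](\varphi(a))$ for all $b\in B$ and $a\in A$, since the Stone mapping $\varphi$ is already a Boolean embedding of $A$ into $\power(\Ul(A))$; only its compatibility with the operators $\Box_b$ and $[Q_b]$ remains. Unwinding the definitions, the left-hand side is $\{u\in\Ul(A):\Box_b(a)\in u\}$, while $[Q_b](\varphi(a))=\{u\in\Ul(A):Q_b(u)\subseteq\varphi(a)\}=\{u:\forall v\,(\Box_b^{-1}[u]\subseteq v\rarrow a\in v)\}$. The preliminary observation I would record is that for every $u\in\Ul(A)$ the set $\Box_b^{-1}[u]$ is a filter: it contains $\one$ by \eqref{M1}, is closed under meets by \eqref{M2}, and is upward closed because \eqref{M2} forces $\Box_b$ to be monotone (if $x\leq y$ then $x=x\wedge y$, so $\Box_b(x)=\Box_b(x)\wedge\Box_b(y)\leq\Box_b(y)$). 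This is precisely the object from which $Q_b$ is built.

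For the inclusion $\varphi(\Box_b(a))\subseteq[Q_b](\varphi(a))$ I would argue directly. If $\Box_b(a)\in u$, then $a\in\Box_b^{-1}[u]$ by definition, so any $v$ with $Q_b(u,v)$—that is, any $v\supseteq\Box_b^{-1}[u]$—must satisfy $a\in v$. Hence $Q_b(u)\subseteq\varphi(a)$ and $u\in[Q_b](\varphi(a))$.

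The reverse inclusion is where the work lies, and I would prove its contrapositive. Assume $\Box_b(a)\notin u$, so $a\notin\Box_b^{-1}[u]$. Because $\Box_b^{-1}[u]$ is a filter, for every $c\in\Box_b^{-1}[u]$ we cannot have $c\leq a$ (otherwise upward closure would force $a\in\Box_b^{-1}[u]$), and therefore $c\wedge\neg a\neq\zero$. Thus $\Box_b^{-1}[u]\cup\{\neg a\}$ has the finite meet property and generates a proper filter, which by the ultrafilter theorem extends to some $v\in\Ul(A)$ with $\Box_b^{-1}[u]\subseteq v$ and $\neg a\in v$. Then $Q_b(u,v)$ holds while $a\notin v$, so $Q_b(u)\nsubseteq\varphi(a)$ and $u\notin[Q_b](\varphi(a))$, as required.

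The main obstacle is exactly this extension step: one must guarantee that adjoining $\neg a$ to $\Box_b^{-1}[u]$ does not collapse the filter. The decisive point making it work is that $\Box_b^{-1}[u]$ is genuinely a filter—so the failure $a\notin\Box_b^{-1}[u]$ propagates, via upward closure, to $c\wedge\neg a\neq\zero$ for all of its elements. I note that this is the same filter-extension device already used in the proof of Lemma~\ref{lem:Box-to-Q-injection} (where an ultrafilter $w\supseteq\Box_c^{-1}[u]\cup\{\neg x\}$ was constructed), so the argument here is essentially a reuse of that construction.
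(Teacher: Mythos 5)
Your proof is correct and follows essentially the same route as the paper: both directions hinge on the fact that $\Box_b^{-1}[\ult]$ is a filter, with the inclusion $\varphi(\Box_b(a))\subseteq[Q_b](\varphi(a))$ argued directly and the reverse inclusion resting on the fact that an element lying in every ultrafilter extension of a filter must lie in the filter itself. The only difference is one of detail: the paper asserts this last entailment without proof, whereas you make it explicit via the contrapositive and the filter-extension argument (adjoining $\neg a$ and invoking the ultrafilter theorem), which is exactly the standard justification the paper leaves implicit.
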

\begin{proof}
        ($\subseteq$) If $u\in\varphi(\Box_b(a))$, then $a\in\Box_b^{-1}[u]$, so every ultrafilter $v$ that extends $\Box_b^{-1}[u]$ contains $a$. In other words, for every ultrafilter $v$, if $Q_b(u,v)$, then $v\in\varphi(a)$. Thus $u\in[Q_b](\varphi(a))$.

        \smallskip

        ($\supseteq$) If $u\in[Q_b](\varphi(a))$, then for every $v$ such that $\Box_b^{-1}[u]\subseteq v$, $v$ contains $a$, which entails that $a$ must be in $\Box_b^{-1}[u]$. Thus $u\in\varphi(\Box_b(a))$ as required.
\end{proof}

\begin{proof}[Proof of Theorem~\ref{th:MMA-closed-for-CE}]
    Fix $\frM\defeq\langle A,\{\Box_b\}_{b\in B}\rangle\in\MMA$ and take its complex algebra $\Em(\frM)$. From the lemma above it follows that both algebras are of the same type.  By Lemma~\ref{lem:MMA-embedding}, 
    $B$~is isomorphic to $\varphi[B]$, the image of $B$ in $\power(\Ul(A))$ via the standard Stone embedding. That each $[Q_b]$ is a unary necessity operator on $\power(\Ul(A))$ follows from the standard definition of $[Q_b]$ above. We also have that
    \[
    b_1\leq b_2\rarrow [Q_{b_2}](X)\subseteq [Q_{b_1}](X)\,.
    \]
    Indeed, assuming that $b_1\leq b_2$ and taking $u\in [Q_{b_2}](X)$, we have that $Q_{b_2}(u)\subseteq X$, and from Lemma~\ref{lem:order-on-Qb} we get that $Q_{b_1}(u)\subseteq Q_{b_2}(u)$. Thus $u\in[Q_{b_1}](X)$. So $\Em(\frM)$ is a multi-modal antitone algebra.
\end{proof}

By Theorem~\ref{th:CA-MMA-terme-equivalent} the classes of conditional algebras and full multi-modal antitone algebras are term equivalent. In Theorem \ref{th:MMA-closed-for-CE} we have proven that the canonical extension of any $\frM\in\MMA$ is also in the class. However, canonical extensions of multi-modal algebras do not have to be full. On the other hand, for the canonical extension $\klam{\power(\Ul(A)),\condi_{T_A}}$ of $\frA\in\CA$,  there exists a full multi-modal term-equivalent algebra $\klam{\power(\Ul(A)),\{\Box_U\}_{U\in\power(\Ul(A))}}$. So, if $\klam{A,\{\Box_a\}_{a\in A}}$ is obtained from $\klam{A,\condi}$ which is infinite, its canonical extension
\[
\klam{\power(\Ul(A)),\{[Q_a]\}_{a\in A}}
\]
is a multi-modal algebra with $\{[Q_a]\}_{a\in A}\subseteq\{\Box_U\}_{U\in\power(\Ul(A))}$ (but not necessarily vice versa). However, we will show that for each $a\in A$, $[Q_a]=\Box_{\varphi(a)}$, and every operator $\Box_U$ can be characterized by means of a family of $[Q_a]$'s.

To begin, let us observe that the operation $D_{\filH}^{\condi}(\fil)$ can be characterized utilizing the relation between the conditional operator and unary necessity operators of full MMAs.  Since $\Box_a$ is a necessity operator, $\Box_a^{-1}[\fil]=\{b\in A: a\condi b\in \fil\}$ is a filter, provided $\fil$ is a filter too. Also, as $a$ is in $\upop a$, it must be the case that
\[
\Box_a^{-1}[\fil]\subseteq D_{\fil}^{\condi}(\upop a)\,.
\]
Further, we have that
\begin{lemma}\label{lem:Box-and-D}
    If $\frA\defeq\langle A,\condi\rangle$ is a Boolean algebra with a binary operator $\condi$, then the algebra satisfies \eqref{C3} iff for every $a\in A$ and every $\fil\in\Fi(A)$, $D_{\fil}^{\condi}(\upop a)\subseteq \Box_a^{-1}[\fil]$. Thus, if $\frA\in\CA$, then 
    \[
        \Box_a^{-1}[\fil]=D_{\fil}^{\condi}(\upop a)\,.
    \]
\end{lemma}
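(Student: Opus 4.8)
The plan is to prove the biconditional directly and then read off the displayed equality by combining it with the inclusion $\Box_a^{-1}[\fil]\subseteq D_{\fil}^{\condi}(\upop a)$ already noted just before the lemma (which holds unconditionally because $a\in\upop a$).

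For the forward direction I would assume \eqref{C3} and take any $b\in D_{\fil}^{\condi}(\upop a)$, so that $c\condi b\in\fil$ for some $c\geq a$. Since \eqref{C3} yields \eqref{eq:cond-order-reversing} via Lemma~\ref{lem:orden}, the inequality $a\leq c$ gives $c\condi b\leq a\condi b$; upward closure of the filter then forces $a\condi b\in\fil$, that is, $b\in\Box_a^{-1}[\fil]$.

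The converse is the crux. Assuming $D_{\fil}^{\condi}(\upop a)\subseteq\Box_a^{-1}[\fil]$ for all $a$ and all $\fil$, I would recover \eqref{C3} in its equivalent guise as antitonicity of $\condi$ in the first coordinate. Given $x\leq y$ and an arbitrary $z$, the decisive move is to instantiate the hypothesis at the \emph{principal} filter $\fil\defeq\upop(y\condi z)$ and at the element $x$. Because $y\in\upop x$ and $y\condi z\in\fil$, the element $z$ lies in $D_{\fil}^{\condi}(\upop x)$; the assumed inclusion then places $z$ in $\Box_x^{-1}[\fil]$, i.e.\ $x\condi z\in\upop(y\condi z)$, which is precisely $y\condi z\leq x\condi z$. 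Antitonicity in the first coordinate is equivalent to \eqref{C3}: the implication from \eqref{C3} is \eqref{eq:cond-order-reversing}, while conversely, applying antitonicity to $a,b\leq a\vee b$ gives $(a\vee b)\condi c\leq a\condi c$ and $(a\vee b)\condi c\leq b\condi c$, whence $(a\vee b)\condi c\leq(a\condi c)\wedge(b\condi c)$. Thus \eqref{C3} follows.

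The main obstacle is the choice of filter in the converse: the hypothesis is a purely set-theoretic statement about filters, and to extract the algebraic inequality $y\condi z\leq x\condi z$ one must select the principal filter generated by $y\condi z$, so that membership in $\Box_x^{-1}[\fil]$ translates back into the desired order relation. Once the equivalence is in place, the final equality for $\frA\in\CA$ is immediate: $\frA$ satisfies \eqref{C3}, hence $D_{\fil}^{\condi}(\upop a)\subseteq\Box_a^{-1}[\fil]$, and the reverse inclusion holds for free since $a\in\upop a$.
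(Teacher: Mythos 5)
Your proposal is correct. The forward direction and the final equality coincide with the paper's argument: \eqref{C3} gives \eqref{eq:cond-order-reversing}, which pushes any witness $c\condi b\in\fil$ with $c\geq a$ down to $a\condi b\in\fil$, and the reverse inclusion $\Box_a^{-1}[\fil]\subseteq D_{\fil}^{\condi}(\upop a)$ holds for free since $a\in\upop a$. Where you diverge is the converse. The paper argues by contraposition: if \eqref{C3} fails at some $a,b,c$, it picks a filter $\fil$ containing $(a\vee b)\condi c$ but not $(a\condi c)\wedge(b\condi c)$, assumes without loss of generality $a\condi c\notin\fil$, and observes that $c\in D_{\fil}^{\condi}(\upop a)\setminus\Box_a^{-1}[\fil]$ because $a\leq a\vee b$ --- a direct refutation of the inclusion. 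You instead give a direct proof through an intermediate property: instantiating the hypothesis at the principal filter $\upop(y\condi z)$ yields full antitonicity of $\condi$ in the first coordinate ($x\leq y$ implies $y\condi z\leq x\condi z$), from which \eqref{C3} follows by applying antitonicity to $a,b\leq a\vee b$ and taking the meet. Both proofs exploit the same mechanism --- membership in a (principal) filter encodes the Boolean order --- but your decomposition has the merit of making precise the paper's remark in the introduction that \eqref{C3} is equivalent to antitonicity in the first coordinate, while the paper's contrapositive is marginally shorter since it never needs to state that equivalence. One pedantic point common to both: Lemma~\ref{lem:orden} is stated for conditional algebras, whereas here the algebra need not satisfy \eqref{C1} and \eqref{C2}; this is harmless because the derivation of \eqref{eq:cond-order-reversing} in that lemma uses only \eqref{C3}.
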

\begin{proof}
The implication from left to right stems from \eqref{eq:cond-order-reversing}, which is a consequence of \eqref{C3}. 

For the other direction, suppose that there exist $a,b,c\in A$ such that $(a\vee b)\condi c\nleq (a\condi c)\wedge (b\condi c)$. Then, there exists $\fil\in \Fi(A)$ such that $(a\vee b)\condi c\in \fil$ and $(a\condi c)\wedge (b\condi c)\notin \fil$. Assume without the loss of generality that $a\condi c\notin \fil$, i.e., $c\notin\Box_a^{-1}[\fil]$. But $c\in D_{\fil}^{\condi}(\upop a)$, as $a\leq a\vee b$.
\end{proof}

\begin{lemma}\label{lem:props-for-Box}
    If $\frA\defeq\langle A,\condi\rangle\in \CA$, $a\in A$ and $\filH,\fil\in\Fi(A)$, then
    \[
        D^{\condi}_{\fil}(\filH)=\bigcup_{a\in \filH} D^{\condi}_{\fil}(\upop a)=\bigcup_{a\in \filH}\Box^{-1}_a[\fil]\,.
    \]
\end{lemma}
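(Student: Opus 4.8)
The plan is to dispatch the two equalities separately, each by unwinding the definition of $\D{\fil}{\cdot}$. The rightmost equality is essentially free: Lemma~\ref{lem:Box-and-D} already furnishes, for each fixed $a\in A$ in a conditional algebra, the identity $\Box_a^{-1}[\fil]=\D{\fil}{\upop a}$. Taking the union over all $a\in\filH$ on both sides immediately yields $\bigcup_{a\in\filH}\D{\fil}{\upop a}=\bigcup_{a\in\filH}\Box_a^{-1}[\fil]$, with no further work.

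For the leftmost equality, the key observation I would record first is that the operator $\D{\fil}{\cdot}$, viewed as a function of its second argument, turns unions into unions: directly from the definition $\D{\fil}{Y}=\{b:(\exists a\in Y)\,a\condi b\in\fil\}$, membership of $b$ asks only for a single witness $a\in Y$, so $\D{\fil}{\bigcup_i Y_i}=\bigcup_i\D{\fil}{Y_i}$ for any family $\{Y_i\}$. I would then use that a filter is upward closed, whence $\filH=\bigcup_{a\in\filH}\upop a$ (each $a\in\filH$ lies in $\upop a$, and conversely $\upop a\subseteq\filH$ by upward-closedness). Combining the two facts gives $\D{\fil}{\filH}=\D{\fil}{\bigcup_{a\in\filH}\upop a}=\bigcup_{a\in\filH}\D{\fil}{\upop a}$, which is exactly the desired identity.

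If one prefers to avoid the union-preservation remark, the same equality follows by two inclusions. For $\subseteq$, if $b\in\D{\fil}{\filH}$ pick a witness $a\in\filH$ with $a\condi b\in\fil$; since $a\in\upop a$ we get $b\in\D{\fil}{\upop a}$ with $a\in\filH$. For $\supseteq$, if $b\in\D{\fil}{\upop a}$ for some $a\in\filH$, there is $c\geq a$ with $c\condi b\in\fil$; upward-closedness of $\filH$ gives $c\in\filH$, hence $b\in\D{\fil}{\filH}$. This second inclusion is also immediate from the monotonicity statement of Lemma~\ref{lem:D-filter}(1) applied to the inclusion $\upop a\subseteq\filH$.

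There is no genuine obstacle: the statement is a bookkeeping identity, and every ingredient — the definition of $\D{\fil}{\cdot}$, its distribution over unions in the second coordinate, upward-closedness of filters, and the already-proved Lemma~\ref{lem:Box-and-D} — is in place. The only point requiring a moment's care is remembering to invoke that $\filH$ is a \emph{filter}, hence upward closed, so that $\filH=\bigcup_{a\in\filH}\upop a$; without upward-closedness the leftmost equality would fail.
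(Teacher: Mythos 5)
Your proof is correct and essentially coincides with the paper's: the paper establishes the first equality by exactly your two-inclusion argument (monotonicity of $\D{\fil}{\cdot}$ from Lemma~\ref{lem:D-filter}(1) for one inclusion, extraction of a witness $a\in\filH$ for the other), and the second equality is left to Lemma~\ref{lem:Box-and-D} just as you invoke it. One small inaccuracy in your closing remark: in a conditional algebra the leftmost equality would \emph{not} fail for a non-upward-closed $\filH$, since \eqref{eq:cond-order-reversing} together with the fact that $\fil$ is a filter gives $\D{\fil}{\upop a}=\D{\fil}{\{a\}}$, so the identity actually holds for an arbitrary subset $\filH\subseteq A$; what upward-closedness buys is only the decomposition $\filH=\bigcup_{a\in\filH}\upop a$ used in your preferred phrasing.
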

\begin{proof}
A for all $a\in \filH$, $\upop a\subseteq\filH$, so by monotonicity of $D^{\condi}_{\fil}$ we obtain that $D^{\condi}_{\fil}(\upop a)\subseteq D^{\condi}_{\fil}(\filH)$. On the other hand, if $b\in D^{\condi}_{\fil}(\filH)$, then there exists $a\in \filH$ such that $a\condi b\in \fil$. It follows that $b\in D^{\condi}_{\fil}(\upop a)\subseteq \bigcup_{a\in \filH } D^{\condi}_{\fil}(\upop a)$.
\end{proof}

Since
\[
D^{\condi}_{\ult}(\fil)\subseteq \ultV\quad\text{iff}\quad \text{for all $a\in \fil$},\ \Box_a^{-1}[\ult]=D^{\condi}_{\ult}(\upop a) \subseteq \ultV  
\]
we get that 
\begin{equation}\label{eq:for-th-CA-MMA}
D^{\condi}_{\ult}(\fil)\subseteq \ultV\quad\text{iff}\quad\ultV\in \bigcap_{a\in \fil} Q_a(\ult)\quad\text{iff}\quad(\forall a\in\fil)\,Q_a(\ult,\ultV)\,.
\end{equation}

\begin{theorem}
Suppose $\frA\defeq\klam{A,\condi}\in\CA$ and let $\frM_{\frA}\defeq\klam{A,\{\Box_a\}_{a\in A}}$ be its definitionally equivalent multi-modal algebra. Let
\[\frM_{\Em(\frA)}\defeq\klam{\power(\Ul(A)),\{\Box_U\}_{U\in\power(\Ul(A))}}\] be the full multi-modal algebra definitionally equivalent to the canonical extension of $\frA$. Let $\Em(\frM)=\klam{\power(\Ul(A)),\{[Q_a]\}_{a\in A}}$ be the canonical extension of $\frM$. Then\/\textup{:}
    \begin{enumerate}\itemsep+2pt
        \item for every $a\in A$, $[Q_a]=\Box_{\varphi(a)}$,
        \item for all $U,V\in\power(\Ul(A))$
        \begin{equation*}
   \Box_U(V)=\bigcap_{\klam{Y,O}\in\Pi}\left\{\bigcup_{\klam{a,b}\in \fil_Y\times\ide_O}\Box_{\varphi(a)}(\varphi(b))\right\}=\bigcap_{\klam{Y,O}\in\Pi}\left\{\bigcup_{a\in \fil_Y}[Q_a](O)\right\}\,,
\end{equation*}
        where $\Pi\defeq\downop^{\closed} U\times \upop^{\topos} V$.     
\end{enumerate}
\end{theorem}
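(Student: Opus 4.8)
The plan is to prove the two items in turn, using item (1) as the engine for item (2).

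For item (1), I would first compute the successor set $Q_a(\ult)$ explicitly. Since $\Box_a(b)=a\condi b$, Lemma~\ref{lem:Box-and-D} gives $\Box_a^{-1}[\ult]=\D{\ult}{\upop a}$, and because $\varphi(\upop a)=\varphi(a)$, equation~\eqref{eq:remark-T} yields
$Q_a(\ult)=\varphi(\Box_a^{-1}[\ult])=\varphi(\D{\ult}{\upop a})=T_A(\ult,\varphi(a))$.
Consequently $[Q_a](X)=\{\ult:T_A(\ult,\varphi(a))\subseteq X\}$, whereas by the definition of the complex conditional $\Box_{\varphi(a)}(X)=\varphi(a)\TAcondi X=\{\ult:T_A(\ult,Z)\subseteq X\text{ for every closed }Z\subseteq\varphi(a)\}$. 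I would then argue that these two sets coincide: $\varphi(a)$ is clopen, hence it is itself the largest closed subset of $\varphi(a)$, so taking $Z=\varphi(a)$ shows one inclusion, while monotonicity of $\D{\ult}{\cdot}$ in its filter argument (Lemma~\ref{lem:D-filter}(1)) gives $T_A(\ult,Z)\subseteq T_A(\ult,\varphi(a))$ for every closed $Z\subseteq\varphi(a)$, so the universal condition collapses to the single instance $Z=\varphi(a)$. This establishes $[Q_a]=\Box_{\varphi(a)}$.

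For the first equality in item (2), I would simply feed item (1) into the already-established $\pi$-extension formula. By Theorem~\ref{th:pi-extension}, $\Box_U(V)=U\TAcondi V=U\condi^\pi V$, so equation~\eqref{eq:pi-extension-of-condi} presents $\Box_U(V)$ as $\bigcap_{\klam{Y,O}\in\Pi}\bigcup_{\klam{a,b}\in\fil_Y\times\ide_O}\varphi(a\condi b)$. Rewriting $\varphi(a\condi b)=\varphi(\Box_a(b))$ as $[Q_a](\varphi(b))$ via Lemma~\ref{lem:MMA-embedding}, and then as $\Box_{\varphi(a)}(\varphi(b))$ via item (1), reproduces the first displayed identity verbatim.

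The second equality is where the real work lies, and I expect it to be the main obstacle. It suffices to show, for a fixed pair $\klam{Y,O}\in\Pi$ and a fixed $a\in\fil_Y$, that $\bigcup_{b\in\ide_O}\Box_{\varphi(a)}(\varphi(b))=[Q_a](O)$; the claim then follows by distributing the unions over $\fil_Y\times\ide_O$ and intersecting over $\Pi$. The inclusion $\subseteq$ is immediate from monotonicity of $[Q_a]=\Box_{\varphi(a)}$ together with $\varphi(b)\subseteq O$. For the reverse inclusion, I would invoke compactness of the Stone space: if $\ult\in[Q_a](O)$ then $Q_a(\ult)=\varphi(\Box_a^{-1}[\ult])$ is the $\varphi$-image of a filter, hence closed and therefore compact, and it is contained in the open set $O=\bigcup_{b\in\ide_O}\varphi(b)$; the clopens $\{\varphi(b):b\in\ide_O\}$ then form an open cover admitting a finite subcover, and since $\ide_O$ is an ideal (closed under finite joins) this collapses to a single $b_0\in\ide_O$ with $Q_a(\ult)\subseteq\varphi(b_0)$, i.e.\ $\ult\in\Box_{\varphi(a)}(\varphi(b_0))$. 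The delicate point worth isolating is precisely that a necessity operator need not commute with arbitrary unions; it commutes with the directed union defining $O$ only because $\ide_O$ is directed and $Q_a(\ult)$ is compact, which is exactly what compactness of the dual space supplies.
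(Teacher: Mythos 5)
Your proof is correct, and for item (1) and the first identity of item (2) it travels the paper's own road: item (1) rests on Lemma~\ref{lem:Box-and-D} and \eqref{eq:remark-T} (so that $Q_a(\ult)=\varphi(\Box_a^{-1}[\ult])=T_A(\ult,\varphi(a))$) together with the monotonicity from Lemma~\ref{lem:D-filter}, which collapses the universal quantification over closed $Z\subseteq\varphi(a)$ to the single instance $Z=\varphi(a)$ --- the paper packages these same two observations as two inclusions via \eqref{eq:for-th-CA-MMA}, but the content is identical; and the first identity of (2) is read off from Theorem~\ref{th:pi-extension}, \eqref{eq:pi-extension-of-condi} and Lemma~\ref{lem:MMA-embedding} in both treatments. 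Where you genuinely add something is the second identity of (2): the paper's entire proof of item (2) is the sentence ``this is a consequence of the first point and \eqref{eq:pi-extension-of-condi}'', which leaves unproved precisely the step you isolate, namely that $\bigcup_{b\in\ide_O}[Q_a](\varphi(b))=[Q_a](O)$ for each fixed $a\in\fil_Y$. You are right that this is not automatic --- a necessity operator does not commute with arbitrary unions --- and right about how to obtain it: $Q_a(\ult)=\varphi(\Box_a^{-1}[\ult])$ is closed, hence compact, the clopens $\varphi(b)$ with $b\in\ide_O$ cover it, and directedness of the ideal collapses a finite subcover to a single $\varphi(b_0)$ with $b_0\in\ide_O$ (the degenerate case $Q_a(\ult)=\emptyset$ is also covered, since $0\in\ide_O$). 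So your proposal is not only correct but, on this last point, more complete than the paper's own argument: the paper's brevity costs it the one equality that actually requires the compactness of the dual space, and your version supplies exactly that missing justification.
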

\begin{proof}
(1) We will show that for any $V$, $[Q_a](V)=\Box_{\varphi(a)}(V)$. 

For the left-to-right inclusion, let $\ult\in[Q_a](V)$, i.e., $Q_a(\ult)\subseteq V$. Let $Y$ be an arbitrary closed subset of $\varphi(a)$, let $\ultV\in T_A(\ult,Y)$. In consequence, $\D{\ult}{\fil_Y}\subseteq\ultV$. But $\upop a\subseteq\fil_Y$, and $\D{u}{\upop a}\subseteq\D{u}{\fil_Y}$, therefore $\D{u}{\upop a}\subseteq\ultV$. So from \eqref{eq:for-th-CA-MMA} we obtain that $Q_a(\ult,\ultV)$, i.e., $\ultV\in Q_a(\ult)$. Thus $\ultV\in\varphi(a)\TAcondi V=\Box_{\varphi(a)}(V)$. 

For the right-to-left inclusion, assume that $\ult\in\Box_{\varphi(a)}(V)=\varphi(a)\TAcondi V$. To prove that $\ult\in[Q_a](V)$ we need to show that $Q_a(\ult)\subseteq V$. Thus, pick $\ultV\in Q_a(\ult)$, i.e., $Q_a(\ult,\ultV)$. So $\Box_a^{-1}[\ult]\subseteq\ultV$ and by Lemma~\ref{lem:Box-and-D} we get that $\D{\ult}{\uparrow a}\subseteq v$. In consequence $\ultV\in T_A(\ult,\varphi(a))$, and from the assumption it follows that $v\in V$, as required.

\smallskip

(2) This is a consequence of the first point and \eqref{eq:pi-extension-of-condi}.
\end{proof}

\section{A representation theorem for t-frames}

In general, it is not possible to show that every t-frame $\frX\defeq\langle X,T\rangle$ can be embedded into the ultrafilter frame of its full complex algebra via the standard embedding  $e\colon X\to\Ul(\power(X))$ defined by $e(x)\defeq\{M\in\power(X)\mid x\in M\}$ (that works in the case of frames with elementary relations). The following example demonstrates this fact.

\begin{example}
    Take $\frN\defeq\langle\Nat,T\rangle$ to be a t-frame with the domain of natural numbers and $T$ such that
    \[
    T(n,Z,m)\iffdef n+m\in Z\,.
    \]
    Clearly, any triple $\klam{n,\Nat,m}$ is in~$T$. However, for   $\langle\Ul(\power(\Nat)),T_{\power(\Nat)}\rangle$ it cannot be the case that $T_{\power(\Nat)}(e(n),e[\Nat],e(m))$, precisely because $e[\Nat]$ is the family of all principal ultrafilters of $\power(\Nat)$, and for this family there is no filter $\fil$ such that $e[\Nat]=\varphi(\fil)$. Indeed, such a filter would have to be a subset of $\bigcap e[\Nat]$, which in this case is just $\{\Nat\}$, and so $\varphi(\fil)=\Ul(\power(X))$. Observe that $e[\Nat]$---the set of all isolated points of the space $\Ult(\power(X))$---is not closed in this space.\QED

\end{example}

In light of the example, $T\subseteq X\times\power(X)\times X$ may be, in a sense, <<too large>>. The middle projection will usually contain sets $Z$ whose images $e[Z]$ are not closed in the Stone space $\Ul(\power(X))$, and in consequence are not elements of the second projection of $T_{\power(X)}$. Therefore, to guarantee that $e$ embeds $T$ into a frame based on the set of ultrafilters, we should find a relation that extends $T_{\power(X)}$ in such a way so as to accommodate all the required triples from~$T$. Below, such a relation is defined and the set-theoretical representation theorem for this relation is proved.\footnote{The idea for the relation and most of the proofs below are due to one of the referees. We express our utmost gratitude for pointing to us a way towards the set-theoretical representation for t-frames.}

\begin{definition}
Let $\frX\defeq\langle X,T\rangle$ be a t-frame and let $\calF_X$ be a family of subsets of $X$. $T$~is \emph{upward closed in $\calF_x$ in the second coordinate} if $T(x,Z,y)$ and $Z\subseteq U\in\calF_X$ entail that $T(x,U,y)$.
\end{definition}

Obviously, any relation $T\subseteq X\times\power(X)\times X$ can be extended to the relation $\barT$ which is closed in the second coordinate via
\[
\barT(x,U,y)\iffdef(\exists Z\in\power(U))\,T(x,Z,y)\,.
\]
Let us call $\barT$ the \emph{upward closure} of $T$.

\begin{lemma}\label{lem:simpler-Tcondi}
If $\frX\defeq\langle X,T\rangle$ is a t-frame and  $\calF_X$ is a family of subsets of $X$ such that $T$ is upward closed in $\calF_X$ in the second coordinate, then for the conditional algebra $\langle \calF_X,\Tcondi \rangle$ we have 
\begin{equation*}
U\Tcondi V=\{x\in X:T(x,U)\subseteq V\}\,.  
\end{equation*}
\end{lemma}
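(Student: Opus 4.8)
The plan is to prove the set equality by unwinding the definition of $\Tcondi$ given in \eqref{df:Tcondi} and exploiting the upward-closure hypothesis to eliminate the quantification over all small sets $Z$. Recall that by definition
\[
U\Tcondi V=\{x\in X:(\forall Z\in\power(U)\cap\calF_X)\,T(x,Z)\subseteq V\}\,,
\]
so the target is to show this coincides with $\{x\in X:T(x,U)\subseteq V\}$. First I would establish the forward inclusion $U\Tcondi V\subseteq\{x:T(x,U)\subseteq V\}$. This is where the upward-closure assumption does the work: fix $x\in U\Tcondi V$ and suppose $y\in T(x,U)$, i.e., $T(x,U,y)$ holds. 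Since $U\in\calF_X$, taking $Z\defeq U$ itself as the witness in $\power(U)\cap\calF_X$ yields $T(x,U)\subseteq V$, hence $y\in V$. In fact this direction does not even require upward closure, merely that $U$ belongs to $\calF_X$ so that $U$ is an admissible value of $Z$.

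For the reverse inclusion $\{x:T(x,U)\subseteq V\}\subseteq U\Tcondi V$, I would fix $x$ with $T(x,U)\subseteq V$ and verify the universal condition. Take any $Z\in\power(U)\cap\calF_X$ and any $y$ with $T(x,Z,y)$. Because $Z\subseteq U$ and $U\in\calF_X$, the hypothesis that $T$ is upward closed in $\calF_X$ in the second coordinate gives $T(x,U,y)$, that is $y\in T(x,U)$. By the standing assumption $T(x,U)\subseteq V$, so $y\in V$, as needed. This establishes $x\in U\Tcondi V$ and completes the equality.

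I expect this to be a routine verification rather than one with a genuine obstacle; the only subtlety is remembering to invoke the upward-closure hypothesis in precisely one of the two inclusions (the reverse one), and to observe that $U$ itself is a legitimate instance of $Z$ in the defining quantifier since $U\in\calF_X$. One should be slightly careful that the hypothesis is stated for $Z\subseteq U\in\calF_X$, so it applies verbatim with the chosen $U$; no additional assumption on $T(x,\cdot)$ being a well-defined set is required beyond the notation $T(x,Z)\defeq\{y:T(x,Z,y)\}$ already introduced.
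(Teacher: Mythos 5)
Your proof is correct and takes essentially the same route as the paper's: the inclusion $U\Tcondi V\subseteq\{x:T(x,U)\subseteq V\}$ is read off directly from \eqref{df:Tcondi} by instantiating $Z\defeq U$ (legitimate since $U\in\calF_X$), and the reverse inclusion invokes the upward-closure hypothesis to promote $T(x,Z,y)$ with $Z\subseteq U$ to $T(x,U,y)$. The paper's proof is simply a terser statement of exactly this argument, and your added observations (which inclusion genuinely needs upward closure, and why $U$ is an admissible witness) are accurate.
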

\begin{proof}
    Clearly, if $T(x,U)\subseteq V$ and $Z\subseteq U$ is such that $T(x,Z,y)$, then the closure condition entails that $T(x,U,y)$ and so $y\in V$. The other inclusion is a direct consequence of \eqref{df:Tcondi}.
\end{proof}

\begin{lemma}\label{lem:the-same-complex-algebras}
    If $\frX\defeq\langle X,T\rangle$ is a t-frame, then $\frX$ and $\overline{\frX}\defeq\langle X,\barT\rangle$ have the same full complex algebras.
\end{lemma}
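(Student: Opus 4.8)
The plan is to observe that both full complex algebras are built on the very same Boolean algebra $\power(X)$, so that $\Cm(\frX)$ and $\Cm(\overline{\frX})$ can differ only in their conditional operation. Hence it suffices to show that the two operators coincide, i.e.\ that $U\Tcondi V=U\barTcondi V$ for all $U,V\subseteq X$. Since in the full complex algebra we take $\calF_X=\power(X)$, the defining clause \eqref{df:Tcondi} reads $U\Tcondi V=\{x:(\forall Z\subseteq U)\,T(x,Z)\subseteq V\}$, and likewise for $\barTcondi$ with $\barT$ in place of $T$; so everything reduces to comparing, for each $x$, the families $\{T(x,Z):Z\subseteq U\}$ and $\{\barT(x,Z):Z\subseteq U\}$ against $V$.

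For the inclusion $U\barTcondi V\subseteq U\Tcondi V$ I would use that $T\subseteq\barT$, which is immediate from the definition of the upward closure by taking the witness $Z$ for itself. Consequently $T(x,Z)\subseteq\barT(x,Z)$ for every $x$ and every $Z$, so the condition defining membership in $U\barTcondi V$ is at least as demanding as the one defining $U\Tcondi V$; the inclusion follows at once.

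The reverse inclusion is the only step with any content. I would assume $x\in U\Tcondi V$, fix an arbitrary $Z\subseteq U$, and pick $y\in\barT(x,Z)$. Unfolding $\barT(x,Z,y)$ produces a set $W\subseteq Z$ with $T(x,W,y)$. The crucial remark is that $W\subseteq Z\subseteq U$, so $W$ is itself one of the subsets of $U$ quantified over in the definition of $U\Tcondi V$; hence $y\in T(x,W)\subseteq V$. As $Z\subseteq U$ and $y\in\barT(x,Z)$ were arbitrary, this yields $\barT(x,Z)\subseteq V$ for all $Z\subseteq U$, i.e.\ $x\in U\barTcondi V$.

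I do not expect a genuine obstacle here: the whole argument rests on the single observation that the universal quantifier over \emph{all} subsets of $U$ in \eqref{df:Tcondi} absorbs the upward closure, because any witness $W$ for $\barT$ is automatically among the sets already being tested for $T$. It is worth noting that this reasoning never uses any closure property of $T$ itself, which is precisely why passing from $T$ to its upward closure $\barT$ leaves the full complex algebra unchanged.
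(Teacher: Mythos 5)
Your proof is correct and follows essentially the same route as the paper's: one inclusion comes from unfolding the witness in the definition of $\barT$ (observing that the witness $W\subseteq Z\subseteq U$ is already among the sets quantified over), and the other from the pointwise containment $T\subseteq\barT$. The only difference is organizational: the paper first invokes Lemma~\ref{lem:simpler-Tcondi} to replace the quantifier over all subsets by the single condition $\barT(x,U)\subseteq V$ (which then forces an appeal to the upward-closedness of $\barT$ in one direction), whereas you work with the raw definition \eqref{df:Tcondi} for both operators throughout --- a marginally more self-contained packaging of the same absorption argument.
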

\begin{proof}
    We have to show that $\mathord{\Tcondi}=\mathord{\barTcondi}$.

    ($\subseteq$) If $x\in U\Tcondi V$, then for all $Z\subseteq U$, $T(x,Z)\subseteq V$. Pick $y\in\barT(x,U)$. By the definition of $\barT$ we have that there is $Z_0\subseteq U$ such that $y\in T(x,Z_0)$. So $y\in V$, and so $\barT(x,U)\subseteq V$. By Lemma~\ref{lem:simpler-Tcondi} we obtain that $x\in U\barTcondi V$.

    \smallskip

    ($\supseteq$) If $x\in U\barTcondi V$, then $\barT(x,U)\subseteq V$. Let $Z$ be such that $Z\subseteq U$ and $y\in T(x,Z)$. Then $y\in\barT(x,Z)$, and as $\barT$ is upward closed we obtain that $y\in\barT(x,U)$. So $y\in V$, and $T(x,Z)\subseteq V$. By arbitrariness of $Z$ we have that $x\in U\Tcondi V$.
\end{proof}

For any conditional algebra $\frA\defeq\langle A,\condi\rangle$ we can define the following tenary hybrid relation in $\Ul(A)\times\power(\Ul(A))\times\Ul(A)$
\[
\textstyle T'_{A}(u,Z,v)\iffdef(\forall a\in\bigcap Z)(\forall b\in A)\,(\text{if $a\condi b\in u$, then $b\in v$})\,.
\]
Let us observe that the restriction of $T'_{A}$ to closed subsets of the space of ultrafilters in the middle component is precisely the relation $T_A$ from~\eqref{df:T_A}. Indeed, for $u,v\in\Ul(A)$, $Z\in\closed(\Ul(A))$ and $\fil\in\Fil(A)$ such that $Z=\varphi(\fil)$ we obtain
\begin{align*}
T_A(\ult,\varphi(\fil),v)&{}\qtiff D^{\condi}_{\ult}(\fil)\subseteq v\\&{}\qtiff(\forall b\in A)(\forall a\in\fil)\,(\text{if $a\condi b\in\ult$, then $b\in v$})\\
&{}\qtiff(\forall a\in\fil)(\forall b\in A)\,(\text{if $a\condi b\in\ult$, then $b\in v$})\\
&\textstyle {}\qtiff(\forall a\in\bigcap\varphi(\fil))(\forall b\in A)\,(\text{if $a\condi b\in\ult$, then $b\in v$})\\
&{}\qtiff T'_{A}(\ult,\varphi(\fil),v)\,.
\end{align*}
Further, given a t-frame $\frX\defeq\langle X,T\rangle$ with $T$ upward closed, and the frame $\langle\Ul(\power(X)),\TprimePX\rangle$ we can show that the standard mapping $e\colon X\to\ult(\power(X))$ is en embedding. Indeed, for any $x,y\in X$ and $Z\subseteq X$ we obtain (using Lemma~\ref{lem:simpler-Tcondi})
\begin{align*}
\TprimePX(e(x),e[Z],e(y))&{}\qtiff(\forall U\in\textstyle\bigcap e[Z])(\forall V\in\power(X))\,(\text{if $U\Tcondi V\in e(x)$, then $V\in e(y)$})\\
&\qtiff(\forall U\in\upop Z)(\forall V\in\power(X))\,(\text{if $x\in U\Tcondi V$, then $y\in V$})\\
&\qtiff(\forall U\in\upop Z)(\forall V\in\power(X))\,(\text{if $T(x,U)\subseteq V$, then $y\in V$})\\
&\qtiff(\forall V\in\power(X))\,(\text{if $T(x,Z)\subseteq V$, then $y\in V$})\\
&\qtiff y\in T(x,Z)\\
&\qtiff T(x,Z,y)\,.
\end{align*} 
\begin{theorem}
Every t-frame $\frX\defeq\langle X,T\rangle$ can be embedded (via the standard mapping $e\colon X\to\Ul(\power(X))$) into the frame 
\[
\mathfrak{Y}\defeq\left\langle\Ul(\power(X)),\barTprimePX\right\rangle 
\]
whose relation $\barTprimePX$ restricted to $\Ul(\power(X))\times\closed(\Ul(\power(X)))\times\Ul(X)$ is precisely the relation $T_{\power(X)}$ of the full complex algebra of $\frX$.
\end{theorem}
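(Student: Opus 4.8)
The plan is to prove the two claims of the theorem separately: first that $e$ embeds $\frX$ into $\mathfrak{Y}$, and second that $\barTprimePX$ restricted to the closed subsets in the middle coordinate coincides with $T_{\power(X)}$. For the first claim, the essential observation is that the embedding computation carried out just before the theorem statement was performed for $\TprimePX$ under the assumption that $T$ is upward closed. Since an arbitrary $T$ need not be upward closed, I would instead pass to its upward closure $\barT$. By Lemma~\ref{lem:the-same-complex-algebras}, $\frX$ and $\overline{\frX}$ have the same full complex algebra, so in particular $U\Tcondi V = U\barTcondi V$ for all $U,V\in\power(X)$. Applying the displayed chain of equivalences to $\barT$ in place of $T$ (which is legitimate, as $\barT$ is upward closed by construction), I obtain
\[
\barTprimePX(e(x),e[Z],e(y))\qtiff \barT(x,Z,y)\,.
\]
Thus $e$ is a relational embedding with respect to $\barT$, and the remaining point is that $e$ is already a Boolean-algebra embedding of $\power(X)$ into $\Em(\frX)$ via the standard Stone mapping; the $\Tcondi=\barTcondi$ identity guarantees this embedding is compatible with the conditional operations of both $\frX$ and $\overline{\frX}$.

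For the second claim, I would show the set equality
\[
\barTprimePX \cap \bigl(\Ul(\power(X))\times\closed(\Ul(\power(X)))\times\Ul(\power(X))\bigr) \;=\; T_{\power(X)}\,.
\]
The already-verified fact preceding the theorem is that the restriction of $T'_{A}$ to closed middle components equals $T_A$, proved through the chain of biconditionals terminating in $T'_{A}(\ult,\varphi(\fil),v)$. I would apply this with $A=\power(X)$. The work is then to reconcile $T'_{\power(X)}$ with its upward closure $\barTprimePX$ on closed sets: I claim that for a \emph{closed} middle argument $Y=\varphi(\fil)$ the passage to the upward closure changes nothing, i.e.\ $\barTprimePX(u,Y,v)$ iff $\TprimePX(u,Y,v)$. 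The nontrivial direction uses that if some $Z\subseteq Y$ witnesses $\TprimePX(u,Z,v)$, then since $\bigcap Z\supseteq\bigcap Y$, the universal quantifier over $a\in\bigcap Z$ in the definition of $T'_{\power(X)}$ is weaker, so $\TprimePX(u,Z,v)$ yields $\TprimePX(u,Y,v)$; here I am using the antitone dependence of the defining condition on the middle set through the intersection $\bigcap Z$. Chaining these two identifications gives exactly $T_{\power(X)}$ on the closed part.

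I expect the main obstacle to lie in handling the interaction between the intersection operator $\bigcap$ appearing in the definition of $T'_{\power(X)}$ and the upward-closure quantifier $(\exists Z\in\power(U))$ in $\barTprimePX$, because these pull the monotonicity in opposite directions: enlarging the middle set $U$ shrinks $\bigcap U$, which weakens the defining condition, while the existential closure quantifier ranges over \emph{subsets} of $U$. I would therefore be careful to verify that for the closed argument $Y=\varphi(\fil_Y)$ one indeed has $\bigcap\varphi(\fil_Y)=\fil_Y$ (so that the identification of $T'_{\power(X)}$ with $T_{\power(X)}$ on closed sets goes through exactly as in the pre-theorem computation), and then to confirm that replacing $T'_{\power(X)}$ by its upward closure leaves the closed-component restriction untouched. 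Once these monotonicity bookkeeping points are pinned down, the remaining verifications are routine unwindings of the definitions of $e$, $\Tcondi$, $\barT$, and the Stone mapping $\varphi$, all of which have been set up in the preceding lemmas.
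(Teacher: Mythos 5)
Your proposal is correct and, in substance, follows the same route as the paper: pass to the upward closure $\barT$, use Lemma~\ref{lem:the-same-complex-algebras} to identify $\Cm(\frX)$ with $\Cm(\overline{\frX})$, and re-run the two computations displayed just before the theorem (the identification of $T'_A$ with $T_A$ on closed middle components, and the chain of equivalences for the embedding, now legitimately applied to the upward-closed $\barT$). Where you genuinely diverge is in how you parse $\barTprimePX$: you read it as the upward closure of $\TprimePX$, while the paper reads it as the $T'$-relation of the algebra $\Cm(\overline{\frX})$, and your reading forces the extra reconciliation step showing that the closure adds nothing. That step is right, for the reason you give: $Z\mapsto\bigcap Z$ is antitone, so $\TprimePX(u,Z,v)$ together with $Z\subseteq Y$ gives $\bigcap Y\subseteq\bigcap Z$ and hence $\TprimePX(u,Y,v)$ (your wording that the quantifier over $\bigcap Z$ is ``weaker'' is backwards — the condition over the larger set is the stronger one — but the implication you draw is the correct one). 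Note this upward closedness holds for arbitrary middle sets, not only closed ones, and you in fact need the unrestricted version in your first claim, where $e[Z]$ is typically not closed; it is also exactly why your reading and the paper's coincide, since $\Tcondi=\barTcondi$ makes $(\barT)'_{\power(X)}$ literally equal to $\TprimePX$, which is its own upward closure. Two loose ends to tighten: (i) your first claim ends with ``$e$ is a relational embedding with respect to $\barT$'', but the theorem asserts an embedding of $\frX$ itself, so you still need the paper's closing inference that $T\subseteq\barT$, whence $T(x,Z,y)$ implies $\barTprimePX(e(x),e[Z],e(y))$ (only this forward preservation is available, as $T$ need not equal $\barT$); (ii) injectivity of $e$ just comes from points of $X$ being separated by subsets of $X$ — the point map $e\colon X\to\Ul(\power(X))$ should not be conflated with the Stone embedding $\varphi$ of the algebra $\power(X)$, which is a different map.
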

\begin{proof}
Fix $\frX\defeq\langle X,T\rangle$ and take $\overline{\frX}\defeq\langle X,\barT\rangle$. The relation 
\[
\barTprimePX\subseteq\Ul(X)\times\closed(\Ul(X))\times\Ul(X)
\]
as defined above coincides with $\barT_{\power(X)}$ on triples from $\Ul(X)\times\closed(\Ul(X))\times\Ul(X)$, and by Lemma~\ref{lem:the-same-complex-algebras} coincides in the same way with $T_{\power(X)}$. Since $\barT$ is upward closed by Lemma~\ref{lem:simpler-Tcondi} we have that 
\[
U\barTcondi V=\{x\in X:\barT(x,U)\subseteq V\}\,.
\]
Thanks to this, we already know that the following equivalence obtains
\begin{align*}
\barTprimePX(e(x),e[Z],e(y))\qtiff\barT(x,Z,y)\,. 
\end{align*}
As $T\subseteq\barT$, we have that $\frX$ embeds into $\mathfrak{Y}$ via the standard mapping~$e$. 
\end{proof}

\section{Topological duality for conditional algebras}

In this section, we turn to the development of topological duality for conditional algebras and Boolean spaces expanded with a ternary relation.

A topological space $\langle X,\tau\rangle$ is \emph{Boolean} if it is zero-dimensional, compact, and Hausdorff. In the sequel, we consider triples $\langle X,\tau,T\rangle$ such that $\langle X,\tau\rangle$ is a Boolean space and $T\subseteq X\times \closed(\tau)\times X$ is a hybrid ternary relation that simultaneously satisfies the following three conditions:
\begin{enumerate}[label=(T\arabic*),itemsep=0pt]
    \item\label{T1} For every $x \in X$ and for every $Y\in\closed(\tau)$
    \[
    T(x,Y)=\{y\in X:T(x,Y,y)\}\in \closed(\tau).
    \]
    \item\label{T2} For all clopen sets $U, V$, the set $U\Tcondi V$ is clopen.
    \item\label{T3} For $Y\in\closed(\tau)$: $T(x,Y,y)$ if and only if $T(x,U,y)$ for all $U\in\clopen(\tau)$ such that $Y\subseteq U$.
\end{enumerate}
Any triple $\frX\defeq\langle X,\tau,T\rangle$ such that $\langle X,\tau\rangle$ is a Boolean space and $T$ meets \ref{T1}--\ref{T3} will be called a \emph{conditional space}. 

\begin{example}
    Let us show that the notion of the conditional space is not vacuous. To this end, let $A=\{0,a,b,1\}$ be the four-element Boolean algebra and let $\condi\colon A\times A\to A$ be the operation defined by:
    $x\condi y\defeq y$. It is easy to check that $\langle A,\condi\rangle$ is a conditional algebra. Then,  
    \begin{align*}
    T_A=\{&\klam{\upop a,\emptyset,\upop a},\klam{\upop a,\varphi(a),\upop a},\klam{\upop a,\varphi(b),\upop a},\klam{\upop a,\mathrm{Ul}(A),\upop a},\\&\klam{\upop b,\emptyset,\upop b},\klam{\upop b,\varphi(a),\upop b},\klam{\upop b,\varphi(b),\upop b},\klam{\upop b,\mathrm{Ul}(A),\upop b}\}.
    \end{align*}
In consequence,  $\langle \Ul(A),\topos,T_A\rangle$ is a conditional space.\QED
\end{example}

The proof of the following proposition is straightforward and we leave it to the reader.
\begin{proposition}\label{prop:monotonicity-for-T}
    If $\langle X,\tau,T\rangle$ is a conditional space, then $T$ is upward closed in $\closed(\tau)$, and so it is also upward closed in $\clopen(\tau)$. In consequence, by Lemma~\ref{lem:simpler-Tcondi}, for any $U,V\in\clopen(\tau)$, $U\Tcondi V=\{x\in X\mid T(x,U)\subseteq V\}$.
\end{proposition}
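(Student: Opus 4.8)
The plan is to establish the two assertions of Proposition~\ref{prop:monotonicity-for-T} in turn, where the second is an immediate consequence of the first combined with Lemma~\ref{lem:simpler-Tcondi}. The only genuine content lies in proving that $T$ is upward closed in $\closed(\tau)$; once this is in hand, the passage to $\clopen(\tau)$ is trivial because clopen sets are in particular closed, and the final formula for $U\Tcondi V$ follows by instantiating Lemma~\ref{lem:simpler-Tcondi} with $\calF_X=\clopen(\tau)$.

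First I would unwind the definition of upward closure in the second coordinate: I must show that whenever $T(x,Y,y)$ holds with $Y\in\closed(\tau)$, and $Y\subseteq U$ with $U\in\closed(\tau)$, then $T(x,U,y)$ also holds. The natural tool is condition \ref{T3}, which characterizes membership in $T$ (for closed middle coordinate) in terms of the clopen supersets. Concretely, to verify $T(x,U,y)$ via \ref{T3}, I need to check that $T(x,V,y)$ holds for every clopen $V\supseteq U$. So fix such a clopen $V$; since $Y\subseteq U\subseteq V$, the set $V$ is also a clopen superset of $Y$, and therefore the forward direction of \ref{T3} applied to the hypothesis $T(x,Y,y)$ yields $T(x,V,y)$. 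As $V$ was an arbitrary clopen superset of $U$, the backward direction of \ref{T3} delivers $T(x,U,y)$, as desired.

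Once upward closure in $\closed(\tau)$ is established, upward closure in $\clopen(\tau)$ is immediate: if $T(x,Z,y)$ with $Z$ clopen and $Z\subseteq U$ with $U$ clopen, then since every clopen set is closed, the statement just proved applies directly. For the concluding formula, I would invoke Lemma~\ref{lem:simpler-Tcondi} with $\calF_X=\clopen(\tau)$; the upward closure of $T$ in $\clopen(\tau)$ is exactly the hypothesis that lemma requires, and it yields $U\Tcondi V=\{x\in X\mid T(x,U)\subseteq V\}$ for all clopen $U,V$.

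I do not expect any serious obstacle here, since the proposition is essentially a bookkeeping exercise converting the clopen-based condition \ref{T3} into upward closure. The one point deserving care is the logical direction in which \ref{T3} is applied: the hypothesis $T(x,Y,y)$ is pushed \emph{up} to each clopen superset via the easy forward implication of \ref{T3}, and then the less obvious backward implication of \ref{T3} is used to \emph{descend} from all those clopen supersets to the closed set $U$. Keeping the quantifier over clopen supersets correctly scoped---and noting that any clopen superset of $U$ is automatically a clopen superset of the smaller $Y$---is the whole crux, and it is exactly why the authors deem the proof straightforward enough to omit.
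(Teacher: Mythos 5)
The paper leaves this proposition's proof to the reader, and your argument is correct and surely the intended one: you push the hypothesis $T(x,Y,y)$ up to every clopen superset of $U$ (each of which is automatically a clopen superset of $Y$) via the forward direction of \ref{T3}, and then descend to the closed set $U$ via the backward direction of \ref{T3}. The remaining claims follow, as you say, because clopen sets are closed and because Lemma~\ref{lem:simpler-Tcondi} applies with $\calF_X=\clopen(\tau)$ once upward closure is in hand.
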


\begin{definition}
    If $\frA\defeq\langle A,\condi\rangle$ is a conditional algebra, then its \emph{expanded Stone space} is the triple $\Es(\frA)\defeq\langle\Ul(A),\topos,T_A\rangle$ such that $\langle\Ul(A),\topos\rangle$ is the Stone space of~$A$ and $\klam{\Ul(A),T_A}$ is the ultrafilter frame of $\frA$.
\end{definition}

\begin{theorem}\label{th:Es-is-CS}
    If $\frA\defeq\langle A,\condi\rangle\in\CA$, then $\Es(\frA)$ is a conditional space.
\end{theorem}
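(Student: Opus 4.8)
The plan is to verify that the underlying Stone space is Boolean and then to check the three clauses \ref{T1}--\ref{T3} for $T_A$ one at a time, leaning on the filter-theoretic machinery already set up for $\D{\ult}{\fil}$ and on the Representation Theorem. That $\langle\Ul(A),\topos\rangle$ is zero-dimensional, compact and Hausdorff is classical Stone duality, so no new work is needed there; all the content sits in the three defining conditions on the relation.

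For \ref{T1} I would first recall that, by the definition \eqref{df:T_A}, $T_A(\ult,Z,\ultV)$ can hold only when $Z=\varphi(\fil)$ for some $\fil\in\Fi(A)$, so for a non-closed $Z$ the slice $T_A(\ult,Z)$ is empty and there is nothing to check. For closed $Y$ write $Y=\varphi(\fil_Y)$; then \eqref{eq:remark-T} gives $T_A(\ult,Y)=\varphi(\D{\ult}{\fil_Y})$, and since $\D{\ult}{\fil_Y}$ is a filter by Lemma~\ref{lem:D-filter}, its $\varphi$-image is an intersection of clopen sets, hence closed. This is precisely the proposition already recorded after \eqref{eq:remark-T}. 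For \ref{T2} I would use that the clopen subsets of a Stone space are exactly the sets $\varphi(a)$, $a\in A$: given clopen $U=\varphi(a)$ and $V=\varphi(b)$, the Representation Theorem~\ref{th:RT-for-CCA} yields $U\condi_{T_A}V=\varphi(a)\condi_{T_A}\varphi(b)=\varphi(a\condi b)$, which is clopen. Thus \ref{T2} is immediate once the representation is in hand.

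The crux is \ref{T3}, namely that for closed $Y$ one has $T_A(\ult,Y,\ultV)$ iff $T_A(\ult,U,\ultV)$ for every clopen $U\supseteq Y$. Writing $Y=\varphi(\fil)$, a clopen $U=\varphi(a)$ satisfies $Y\subseteq U$ exactly when $a\in\fil$, and $T_A(\ult,\varphi(a),\ultV)$ unwinds (using that $\upop a$ is the filter with $\varphi(\upop a)=\varphi(a)$) to $\D{\ult}{\upop a}\subseteq\ultV$. The forward direction is pure monotonicity: from $a\in\fil$ we get $\upop a\subseteq\fil$, whence $\D{\ult}{\upop a}\subseteq\D{\ult}{\fil}\subseteq\ultV$ by Lemma~\ref{lem:D-filter}(1). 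For the reverse direction I would invoke the decomposition $\D{\ult}{\fil}=\bigcup_{a\in\fil}\D{\ult}{\upop a}$ from Lemma~\ref{lem:props-for-Box}: it shows that $\D{\ult}{\fil}\subseteq\ultV$ is equivalent to $\D{\ult}{\upop a}\subseteq\ultV$ holding for every $a\in\fil$, which is exactly the hypothesis quantified over all clopen $U\supseteq Y$.

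I expect \ref{T3} to be the only clause requiring genuine thought, and the main thing to get right is the bookkeeping: identifying clopen supersets of $\varphi(\fil)$ with the elements of $\fil$, and translating $T_A$ restricted to clopens back into the operation $\D{\ult}{\upop a}$, so that the decomposition lemma can close the gap. The remaining two conditions reduce, as above, to facts already proved.
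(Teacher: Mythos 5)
Your proposal is correct and follows essentially the same route as the paper's own proof: \ref{T1} via Lemma~\ref{lem:D-filter} and \eqref{eq:remark-T}, \ref{T2} via Theorem~\ref{th:RT-for-CCA}, and \ref{T3} via monotonicity of $D^{\condi}_{\ult}$ in one direction and the decomposition $\D{\ult}{\fil}=\bigcup_{a\in\fil}\D{\ult}{\upop a}$ from Lemma~\ref{lem:props-for-Box} in the other. The only difference is that you spell out the bookkeeping (clopen supersets of $\varphi(\fil)$ correspond to elements of $\fil$, and uniqueness of the filter representing a closed set) which the paper leaves implicit.
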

\begin{proof}
By \eqref{df:T_A} we have that
\[
T_A\subseteq \Ul(A)\times\closed(\topos)\times\Ul(A)\,.
\]

\ref{T1} holds by Lemma~\ref{lem:D-filter} and \eqref{eq:remark-T}. 

\smallskip

For \ref{T2} recall that $\varphi[A]$ is the algebra of clopens of the Stone space of $A$. Therefore---by Theorem \ref{th:RT-for-CCA}---for any clopen subsets $U$ and $V$ of $\Ul(A)$, $U\trarrow_{T_A} V$ is clopen. 

\smallskip

For left-to-right direction of \ref{T3}, assume that $T_A(\ult,\varphi(\fil),\ultV)$ and let $a\in A$ be such that $\varphi(\fil)\subseteq \varphi(a)$. So, $a\in\fil$ and $D_{\ult}^{\condi}(\fil)\subseteq \ultV$. By the monotonicity of $D_{\ult}^{\condi}$ we have that $D_{\ult}^{\condi}(\upop a)\subseteq \ultV$ and therefore $T_A(\ult,\varphi(a),\ultV)$. 

For the other direction, suppose that $T_A(\ult,\varphi(a),\ultV)$ for all $a\in\fil$. Then, $D_{\ult}^{\condi}(\upop a)\subseteq \ultV$ for all $a\in\fil$. It follows from Lemma~\ref{lem:props-for-Box} that $D_{\ult}^{\condi}(\fil)\subseteq \ultV$, thus $T_A(\ult,\varphi(\fil),\ultV)$.
\end{proof}

By Proposition \ref{prop:power-set-algebra} and \ref{T2}, we get that 
\begin{proposition}
For every conditional space $\frX\defeq\langle X,\tau,T\rangle$, the algebra 
\[
\Co(\frX)\defeq\langle\clopen(\tau),\Tcondi\rangle
\] 
is a conditional algebra.     
\end{proposition}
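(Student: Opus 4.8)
The plan is to verify the two defining requirements for $\Co(\frX)$ to be a conditional algebra: that its carrier $\clopen(\tau)$ is a Boolean algebra, and that $\Tcondi$ is a binary operation on this carrier satisfying \eqref{C1}, \eqref{C2}, and \eqref{C3}. First I would record that, since $\langle X,\tau\rangle$ is a Boolean space, the family $\clopen(\tau)$ is closed under finite unions, finite intersections, and complements, and contains $\emptyset$ and $X$; hence it is a Boolean subalgebra of $\power(X)$ with bottom $\emptyset$ and top $X$, namely the usual Stone-dual algebra of clopens. This part is entirely standard and requires no input from the relation $T$.

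Next I would make explicit that the operation $\Tcondi$ naming $\Co(\frX)$ is the operation from \eqref{df:Tcondi} associated with the t-frame $\langle X,T\rangle$, instantiated with $\calF_X\defeq\closed(\tau)$ (the relation $T$ of a conditional space lives on $X\times\closed(\tau)\times X$). With this matching in place, condition \ref{T2} says precisely that $U\Tcondi V\in\clopen(\tau)$ whenever $U,V\in\clopen(\tau)$, so $\Tcondi$ is a genuine binary operation on $\clopen(\tau)$ rather than one escaping the carrier. One may optionally simplify its description using Proposition~\ref{prop:monotonicity-for-T}, which (via Lemma~\ref{lem:simpler-Tcondi}) gives $U\Tcondi V=\{x\in X : T(x,U)\subseteq V\}$ for clopen $U,V$, but this is not strictly needed.

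The conditional axioms then come for free from Proposition~\ref{prop:power-set-algebra}. The verification of \eqref{C1}, \eqref{C2}, and \eqref{C3} given there is a purely set-theoretic computation about $\Tcondi$ that is valid for arbitrary subsets $U,V,W\subseteq X$; in particular it is valid when $U,V,W$ are clopen. Combining the three steps---$\clopen(\tau)$ is a Boolean algebra, $\Tcondi$ restricts to a binary operation on it by \ref{T2}, and the three axioms hold on it---yields that $\Co(\frX)$ is a conditional algebra.

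I do not expect a genuine obstacle here: the substantive content is carried entirely by \ref{T2}, which guarantees closure of $\clopen(\tau)$ under $\Tcondi$, while the axioms are inherited verbatim from the already-proved Proposition~\ref{prop:power-set-algebra}. The only point demanding a little care is the bookkeeping in the second step, namely confirming that the operation written $\Tcondi$ in $\Co(\frX)$ is literally the one for which that proposition was established, i.e.\ that $\calF_X$ in \eqref{df:Tcondi} is read as $\closed(\tau)$; once this identification is noted, no further calculation is required.
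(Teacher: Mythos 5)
Your proposal is correct and takes essentially the same route as the paper, whose entire proof is precisely the combination you describe: Proposition~\ref{prop:power-set-algebra} supplies \eqref{C1}--\eqref{C3} as set-theoretic identities (valid in particular on clopens), and condition \ref{T2} guarantees that $\Tcondi$ restricts to a binary operation on $\clopen(\tau)$. Your version merely spells out the bookkeeping the paper leaves implicit, namely that $\clopen(\tau)$ is a Boolean algebra and that the operation of $\Co(\frX)$ is \eqref{df:Tcondi} read with $\calF_X=\closed(\tau)$.
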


\pagebreak

From theorems~\ref{th:RT-for-CCA} and~\ref{th:Es-is-CS} we obtain
\begin{theorem}
    Every conditional algebra $\frA$ is isomorphic to the conditional algebra of a conditional space, i.e., to $\Co(\Es(\frA))$.
\end{theorem}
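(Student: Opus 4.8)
The plan is to show that the Stone mapping $\varphi\colon A\to\clopen(\topos)$ is an isomorphism of conditional algebras between $\frA$ and $\Co(\Es(\frA))$. First I would note that $\Es(\frA)$ is a conditional space by Theorem~\ref{th:Es-is-CS}, so $\Co(\Es(\frA))=\langle\clopen(\topos),\condi_{T_A}\rangle$ is a well-defined conditional algebra. The underlying Boolean-algebra part of the claim is the classical Stone duality: $\varphi$ is a Boolean isomorphism of $A$ onto the algebra $\clopen(\topos)=\varphi[A]$ of clopen subsets of the Stone space. This is standard and may be invoked without proof; what remains is the single equation that says $\varphi$ respects the conditional operators, namely
\[
\varphi(a\condi b)=\varphi(a)\condi_{T_A}\varphi(b)\qquad\text{for all }a,b\in A.
\]

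The key observation is that this is essentially already done. By Theorem~\ref{th:RT-for-CCA} the map $\varphi$ is an embedding of $\frA$ into $\Em(\frA)=\langle\power(\Ul(A)),\condi_{T_A}\rangle$, which precisely means that the displayed equation holds with $\condi_{T_A}$ computed in the full complex algebra $\Cm(\Uf(\frA))$. Since $\varphi(a),\varphi(b)\in\clopen(\topos)$, and the operation of $\Co(\Es(\frA))$ is nothing but the restriction of the operation $\condi_{T_A}$ of $\Cm(\Uf(\frA))$ to the subalgebra $\clopen(\topos)$, the value $\varphi(a)\condi_{T_A}\varphi(b)$ is the same whether read in $\Co(\Es(\frA))$ or in $\Em(\frA)$. (Condition~\ref{T2} guarantees that this common value is itself clopen, so the restriction is genuinely closed on $\clopen(\topos)$ and $\Co(\Es(\frA))$ is a bona fide subalgebra of $\Em(\frA)$.) Hence the conditional-respecting equation is inherited directly from Theorem~\ref{th:RT-for-CCA}.

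So the steps, in order, are: (i) invoke Theorem~\ref{th:Es-is-CS} to know $\Es(\frA)$ is a conditional space and hence $\Co(\Es(\frA))$ is a conditional algebra with operator the restriction of $\condi_{T_A}$; (ii) invoke classical Stone duality to get that $\varphi$ is a Boolean isomorphism of $A$ onto $\clopen(\topos)$, in particular surjective onto the clopens; (iii) invoke Theorem~\ref{th:RT-for-CCA} to get that $\varphi$ is an embedding into $\Em(\frA)$, which supplies $\varphi(a\condi b)=\varphi(a)\condi_{T_A}\varphi(b)$; and (iv) combine these to conclude that $\varphi$ is a bijective homomorphism onto $\Co(\Es(\frA))$, i.e.\ an isomorphism of conditional algebras.

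The only point that requires a moment of care -- and the step I expect to be the main (though minor) obstacle -- is matching the operations correctly: one must check that computing $\varphi(a)\condi_{T_A}\varphi(b)$ inside the clopen subalgebra $\Co(\Es(\frA))$ yields the same set as computing it inside the full power-set algebra $\Em(\frA)$ of Theorem~\ref{th:RT-for-CCA}. This is where \ref{T2} is used: it ensures the restricted operation lands in $\clopen(\topos)$, so that $\Co(\Es(\frA))$ really is a subalgebra of $\Em(\frA)$ and the embedding of Theorem~\ref{th:RT-for-CCA} corestricts to an isomorphism onto it. Everything else is a direct appeal to the two cited theorems and Stone duality, with no genuinely new computation needed.
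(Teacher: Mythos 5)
Your proposal is correct and follows exactly the paper's route: the paper obtains this theorem directly by combining Theorem~\ref{th:RT-for-CCA} (the Stone map $\varphi$ is an embedding respecting $\condi$) with Theorem~\ref{th:Es-is-CS} ($\Es(\frA)$ is a conditional space), which is precisely your steps (i)--(iv). Your extra care in checking that the operation of $\Co(\Es(\frA))$ is the restriction of $\condi_{T_A}$ from $\Em(\frA)$ to $\clopen(\topos)$, with \ref{T2} ensuring closure under the operation, simply makes explicit what the paper leaves implicit.
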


\begin{definition}
    Two conditional spaces $\frX_1\defeq\langle X_1,\tau_1,T_1\rangle$ and $\frX_2\defeq\langle X_2,\tau_2,T_2\rangle$ are isomorphic iff there exists a homeomorphism $\varepsilon\colon X_1\to X_2$ such that
    \[
        T_1(x,Z,y)\quad\text{iff}\quad T_2\left(\varepsilon(x),\varepsilon[Z],\varepsilon(y)\right)\,.
    \]\QED
\end{definition}

\begin{theorem}\label{th:CS-iso-Es}
    Every conditional space $\frX\defeq\langle X,\tau,T\rangle$ is isomorphic to the expanded Stone space $\Es(\Co(\frX))$ of the conditional algebra $\Co(\frX)=\langle\clopen(\tau),\Tcondi\rangle$ via the usual mapping $\varepsilon\colon X\to \Ul(\clopen(\tau))$, i.e.,   such that
 \[
 \varepsilon(x)\defeq\{U\in \clopen(\tau):x\in U\}\,.
 \]
\end{theorem}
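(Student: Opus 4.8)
The goal is to establish the two clauses in the definition of isomorphism of conditional spaces: that $\varepsilon$ is a homeomorphism, and that $T(x,Z,y)$ holds iff $T_{\frB}(\varepsilon(x),\varepsilon[Z],\varepsilon(y))$ for all $x,y\in X$ and all $Z\in\closed(\tau)$, where I abbreviate $\frB\defeq\Co(\frX)=\langle B,\Tcondi\rangle$ with $B\defeq\clopen(\tau)$, write $T_{\frB}$ for the ternary relation of $\Es(\frB)$, and let $\varphi\colon B\to\power(\Ul(B))$ be the corresponding Stone map. The first clause is the classical Stone duality for Boolean spaces: zero-dimensionality, compactness and the Hausdorff property make $\varepsilon$ a bijection of $X$ onto $\Ul(B)$ carrying the clopen basis to the clopen basis, hence a homeomorphism onto $\langle\Ul(B),\topos\rangle$. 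Consequently both $\varepsilon$ and the induced map $Z\mapsto\varepsilon[Z]$ are bijections, between points and between closed subsets respectively; thus the biconditional above, quantified over all $x,y$ and all closed $Z$, captures the whole of both relations and its middle coordinates are correctly typed.

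Next I would reduce the relational clause to a statement about clopens. For a closed $Z\subseteq X$ the image $\varepsilon[Z]$ is closed, its canonical filter is $\fil_{\varepsilon[Z]}=\{U\in B:\varepsilon[Z]\subseteq\varphi(U)\}=\{U\in B:Z\subseteq U\}$, and one has $\varphi(\fil_{\varepsilon[Z]})=\varepsilon[Z]$ together with $\bigcap\varepsilon[Z]=\fil_{\varepsilon[Z]}$. Invoking the identity established for every conditional algebra in the preceding section, namely that the primed relation $T'_{\frB}$ agrees with $T_{\frB}$ on triples with closed middle coordinate, and unfolding its definition at $\bigcap\varepsilon[Z]=\fil_{\varepsilon[Z]}$, I obtain
\[
T_{\frB}(\varepsilon(x),\varepsilon[Z],\varepsilon(y))\qtiff(\forall a\in\fil_{\varepsilon[Z]})(\forall b\in B)\,(\text{if }a\Tcondi b\in\varepsilon(x)\text{, then }b\in\varepsilon(y))\,.
\]
Since $a,b$ are clopen, Proposition~\ref{prop:monotonicity-for-T} gives $x\in a\Tcondi b$ iff $T(x,a)\subseteq b$, while $a\in\fil_{\varepsilon[Z]}$ means $Z\subseteq a$ and $b\in\varepsilon(y)$ means $y\in b$. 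Hence the right-hand side reads: for every clopen $a\supseteq Z$ and every clopen $b$, if $T(x,a)\subseteq b$ then $y\in b$.

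It then remains to match this clopen condition with $T(x,Z,y)$. For the implication from $T(x,Z,y)$, take a clopen $a\supseteq Z$ and a clopen $b$ with $T(x,a)\subseteq b$; since $T$ is upward closed in the second coordinate (Proposition~\ref{prop:monotonicity-for-T}) and $Z\subseteq a$, we get $T(x,a,y)$, so $y\in T(x,a)\subseteq b$. For the converse, assume the clopen condition and appeal to \ref{T3}: it suffices to show $T(x,a,y)$ for every clopen $a\supseteq Z$. Fixing such an $a$ and supposing toward a contradiction that $y\notin T(x,a)$, the set $T(x,a)$ is closed by \ref{T1}, so zero-dimensionality produces a clopen $b$ with $T(x,a)\subseteq b$ and $y\notin b$; the hypothesis then forces $y\in b$, a contradiction. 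Thus $y\in T(x,a)$ for all clopen $a\supseteq Z$, and \ref{T3} delivers $T(x,Z,y)$.

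I expect the converse direction to be the crux, together with the need to handle the existentially quantified filter concealed in \eqref{df:T_A}. Leaning on the identity $T_{\frB}=T'_{\frB}$ on closed sets sidesteps this entirely, as $T'_{\frB}$ is phrased directly through $\bigcap Z=\fil_{\varepsilon[Z]}$. If instead one unfolds \eqref{df:T_A}, the witnessing filter $\fil$ with $\varphi(\fil)=\varepsilon[Z]$ need not equal $\fil_{\varepsilon[Z]}$, and a compactness step is required: from $Z\subseteq a$ one has $\varphi(\fil)=\varepsilon[Z]\subseteq\varphi(a)$, whence some $a'\in\fil$ satisfies $a'\subseteq a$; upward closure then yields $T(x,a')\subseteq T(x,a)\subseteq b$, placing $b$ in $D^{\Tcondi}_{\varepsilon(x)}(\fil)$ and producing the same contradiction. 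Either way, the only properties used are \ref{T1}, \ref{T3}, the monotonicity of Proposition~\ref{prop:monotonicity-for-T}, and zero-dimensionality of the space --- precisely the defining data of a conditional space --- which is what makes the round trip close.
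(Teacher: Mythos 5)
Your proposal is correct and takes essentially the same route as the paper: the forward direction rests on upward closure of $T$ in the middle coordinate, and the converse on \ref{T1}, \ref{T3} and separation of the point $y$ from the closed set $T(x,a)$ by a clopen set, which is exactly the paper's argument (run there by contraposition, with the non-closed-$Z$ case handled explicitly rather than by your typing remark). The only presentational difference is that you unfold $T_{\clopen(\tau)}$ on closed middle coordinates via the identity with the primed relation $T'$ from the t-frame section, whereas the paper works directly with the filter $\upop Z=\fil_{\varepsilon[Z]}$ and the condition $D^{\Tcondi}_{\varepsilon(x)}(\upop Z)\subseteq\varepsilon(y)$; once the definition of $D$ is unpacked, these are the same statement.
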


\begin{proof}
We only need to show that
\[
T(x,Z,y)\quad\text{iff}\quad T_{\clopen(\tau)}(\varepsilon(x),\varepsilon[Z],\varepsilon(y))\,.
\]
Let $\upop Z=\{U\in\clopen(\tau):Z\subseteq U\}$. $\upop Z$ is a filter of the algebra $\langle \clopen(\tau),\Tcondi\rangle$, and if $Z$ is closed, $\varphi(\upop Z)=\varepsilon[Z]$.
\smallskip

 ($\Rarrow$)   Suppose $T(x,Z,y)$.  We will show that $D_{\varepsilon(x)}^{\Tcondi}(\upop Z)\subseteq \varepsilon(y)$. Let $U\in D_{\varepsilon(x)}^{\Tcondi}(\upop Z)$. Then, there exists $V\in \upop Z$ such that $V\Tcondi U\in \varepsilon(x)$, i.e., $Z\subseteq V$ and $x\in V\Tcondi U$. Since $T(x,Z,y)$, we get---by \eqref{df:Tcondi}---that $y\in U$ and therefore $U\in \varepsilon(y)$, as required. By \eqref{df:T_A}, $T_{\clopen(\tau)}(\varepsilon(x),\varepsilon[Z],\varepsilon(y))$.

\smallskip

 ($\Larrow$)   For the proof by contraposition, let $T^c(x,Z,y)$. We have two possibilities. In the first one, the relation fails due to $Z$ not being closed. Then,  since $\varepsilon$ is a homeomorphism, $\varepsilon[Z]$ is not closed either, and thus $T_{\clopen(\tau)}^c(\varepsilon(x),\varepsilon[Z],\varepsilon(y))$.
 
 In the second one, $Z$ is closed and by \ref{T3}, there exists $U\in\upop Z$ such that $T^c(x,U,y)$, i.e, $y\notin T(x,U)$. As $T(x,U)\in\closed(\tau)$, there exists $V\in \clopen(\tau)$ such that (a) $T(x,U)\subseteq V$ but (b) $V\notin \varepsilon(y)$. By (a) and Proposition~\ref{prop:monotonicity-for-T} we obtain that $U\Tcondi V\in\varepsilon(x)$ and thus $V\in D_{\varepsilon(x)}^{\Tcondi}(\upop U)$. The more so---by Lemma~\ref{lem:D-filter}---$V\in D_{\varepsilon(x)}^{\Tcondi}(\upop Z)$. So $D_{\varepsilon(x)}^{\Tcondi}(\upop Z)\nsubseteq\varepsilon(y)$, i.e., $T_{\clopen(\tau)}^c(\varepsilon(x),\varphi(\upop Z),\varepsilon(y))$, which ends the proof.
\end{proof}

\section{Categorical duality for conditional algebras}\label{sec:categorical-duality}

As is well known, Boolean homomorphisms correspond in a one-to-one manner to continuous functions of the dual spaces. If $h\colon A\to B$ is a homomorphism of Boolean algebras, then the function $f_h\colon \Ul(B)\to\Ul(A)$ given by
\[
f_h(\ult)\defeq h^{-1}[\ult]
\] 
is a continuous function. On the other hand, if $f\colon X_1\to X_2$ is continuous function between Boolean spaces $\langle X_1,\tau_1\rangle$ and $\langle X_2,\tau_2\rangle$, then the map $h_f\colon \clopen(\tau_2)\to \clopen(\tau_1)$ such that 
\[
h_f(U)\defeq f^{-1}[U]
\]
is a homomorphism of the Boolean algebras. We are going to make use of this correspondence to extend it to maps between conditional algebras and conditional spaces.

\begin{definition}
    A homomorphism $h\colon A \to B$ of conditional algebras is a homomorphism of Boolean algebras that also satisfies 
    \begin{equation}\label{homomor}
      h(a\condi b)=h(a)\condi h(b).
     \end{equation}
     We will call such an $h$ a \emph{conditional} homomorphism.\QED
\end{definition}

\begin{lemma}\label{lem:c-homo-via-T}
    Let $\frA\defeq\klam{A,\condi},\frB\defeq\klam{B,\condi}\in\CA$ and $h\colon A \to B$ be a Boolean homomorphism. $h$ is a conditional homomorphism if and only if for all $\ult\in \Ul(B)$, $a,b\in A$ the following equivalence holds
    \[
    T_B(\ult,f_h^{-1}[\varphi(a)])\subseteq f_h^{-1}[\varphi(b)]\quad\text{iff}\quad T_A(f_h(\ult),\varphi(a))\subseteq \varphi(b).
   \]
\end{lemma}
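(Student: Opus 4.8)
The plan is to fix $\ult \in \Ul(B)$ and $a, b \in A$ and reduce the displayed biconditional to a plain statement about ultrafilter membership, then invoke the Stone fact that an element of a Boolean algebra is determined by the set of ultrafilters containing it. The crucial preliminary observation, coming from the standard Boolean duality recalled at the start of this section, is that for every $a \in A$,
\[
f_h^{-1}[\varphi(a)] = \varphi(h(a))\,,
\]
since $f_h(\ult) \in \varphi(a)$ iff $a \in h^{-1}[\ult]$ iff $h(a) \in \ult$. Thus the two sets appearing in the left-hand condition are the \emph{clopen} sets $\varphi(h(a))$ and $\varphi(h(b))$ of the Stone space of $\frB$.

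Next I would collapse each side of the internal equivalence. For the right-hand side, $\Es(\frA)$ is a conditional space by Theorem~\ref{th:Es-is-CS}, so Proposition~\ref{prop:monotonicity-for-T} applies to the clopens $\varphi(a), \varphi(b)$ and gives
\[
T_A(f_h(\ult), \varphi(a)) \subseteq \varphi(b) \qtiff f_h(\ult) \in \varphi(a) \TAcondi \varphi(b)\,.
\]
By the Representation Theorem~\ref{th:RT-for-CCA}, $\varphi(a) \TAcondi \varphi(b) = \varphi(a \condi b)$, so the right-hand side is equivalent to $a \condi b \in h^{-1}[\ult]$, i.e.\ to $h(a \condi b) \in \ult$. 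Arguing identically in $\frB$, and using $f_h^{-1}[\varphi(a)] = \varphi(h(a))$ from the first step, the left-hand side becomes $T_B(\ult, \varphi(h(a))) \subseteq \varphi(h(b))$, which Proposition~\ref{prop:monotonicity-for-T} and Theorem~\ref{th:RT-for-CCA} turn into $\ult \in \varphi(h(a) \condi h(b))$, i.e.\ $h(a) \condi h(b) \in \ult$. Hence the whole internal biconditional is equivalent to
\[
h(a) \condi h(b) \in \ult \qtiff h(a \condi b) \in \ult\,.
\]

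Finally I would assemble the two implications of the lemma from this collapse, which holds for each fixed $\ult, a, b$ irrespective of whether $h$ is a homomorphism. If $h$ is a conditional homomorphism, then $h(a \condi b) = h(a) \condi h(b)$ for all $a, b$, so the last biconditional is trivially true for every $\ult$, yielding the stated equivalence. Conversely, if the stated equivalence holds for all $\ult, a, b$, then for fixed $a, b$ the elements $h(a \condi b)$ and $h(a) \condi h(b)$ of $B$ lie in exactly the same ultrafilters, whence they are equal; since $a, b$ were arbitrary, $h$ is a conditional homomorphism. The argument is essentially bookkeeping: the only point requiring care is checking that the hypotheses of Proposition~\ref{prop:monotonicity-for-T} (clopen arguments inside a genuine conditional space) are in force on both sides, so that each set-inclusion condition really does collapse to membership of the intended element $h(a\condi b)$ or $h(a)\condi h(b)$ in $\ult$.
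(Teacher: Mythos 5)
Your proof is correct, and it is not circular: Theorem~\ref{th:Es-is-CS}, Proposition~\ref{prop:monotonicity-for-T} and Theorem~\ref{th:RT-for-CCA} all precede this lemma in the paper, so you may invoke them. Your route is, however, genuinely different from the paper's. You collapse each inclusion to an ultrafilter-membership statement---$T_A(f_h(\ult),\varphi(a))\subseteq\varphi(b)$ iff $h(a\condi b)\in\ult$, and $T_B(\ult,\varphi(h(a)))\subseteq\varphi(h(b))$ iff $h(a)\condi h(b)\in\ult$---by citing the representation theorem together with the reduction of $\Tcondi$ on clopens to the inclusion $T(x,U)\subseteq V$, and you then finish by injectivity of the Stone map, with the conditional-homomorphism hypothesis entering only in the last bookkeeping step. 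The paper instead proves these same two membership characterizations by hand: it unfolds $T_A$ and $T_B$ through the filters $\D{\ult}{\upop a}$ and $\D{\ult}{\upop h(a)}$, uses \eqref{eq:cond-order-reversing}, and extends a filter to a witnessing ultrafilter $\ultW$ in a contrapositive argument for the left-to-right direction, while the right-to-left direction is obtained as two inequalities verified in every ultrafilter. Your version buys brevity and modularity---the dual condition is exhibited as literally two instances of Theorem~\ref{th:RT-for-CCA}, and the collapse is visibly independent of $h$ being conditional---whereas the paper's version stays self-contained at the level of filters and does not lean on the topological-duality apparatus of the preceding section. One point you should make explicit: Proposition~\ref{prop:monotonicity-for-T}, read through Lemma~\ref{lem:simpler-Tcondi}, computes $U\Tcondi V$ with the middle sets $Z$ restricted to clopens, whereas Theorem~\ref{th:RT-for-CCA} concerns the operation of $\Em(\frA)$, in which $Z$ ranges over \emph{all} subsets of $\varphi(a)$. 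The two operations agree on clopen arguments because $T_A$ is upward closed in $\closed(\topos)$ (the first assertion of that same proposition) and is empty on non-closed middle sets; this one-line reconciliation is needed for your appeal to the representation theorem to be airtight.
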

\begin{proof}

($\Rarrow$)  Assume that $h$ is a homomorphism of conditional algebras. For the proof by contraposition, assume that $\ultV\in \Ul(A)$ is an element of $T_A(f_h(\ult),\varphi(a))$ but $b\notin \ultV$. Then, $\D{f_h(\ult)}{\upop a}\subseteq \ultV$, and $b\notin\D{f_h(\ult)}{\upop a}$, i.e., $a\condi b\notin h^{-1}[\ult]$. So, $h(a)\condi h(b)=h(a\condi b)\notin \ult$, and applying \eqref{eq:cond-order-reversing} we obtain that $h(b)\notin\D{\ult}{\upop h(a)}$. It follows that there exists $\ultW\in \Ul(B)$ such that
\[
\ultW\in T_B(\ult,\varphi(h(a)))=T_B(\ult,f_h^{-1}[\varphi(a)]) 
\]
but $\ultW\notin \varphi (h(b))=f_h^{-1}[\varphi(b)]$, as required.

Analogously, let $\ultV$ be an ultrafilter of $B$ such that $\ultV\in T_B(\ult,f_h^{-1}[\varphi(a)])\setminus f_h^{-1}[\varphi(b)]$. Thus $\D{\ult}{\upop h(a)}\subseteq \ultV$ and $h(b)\notin\ultV$, since $f_h^{-1}[\varphi(b)]=\varphi(h(b))$. Then, $h(a\condi b)=h(a)\condi h(b)\notin\ult$ and we get $a\condi b\notin h^{-1}[\ult]=f_h(\ult)$. From \eqref{eq:cond-order-reversing} it follows again that $b\notin\D{f_h(\ult)}{\upop a}$, so there is an ultrafilter $\ultW\notin\varphi(b)$ that extends $\D{f_h(\ult)}{\upop a}$. In consequence $T_A(f_h(\ult),\varphi(a))\nsubseteq \varphi(b)$.

    \smallskip
    
($\Larrow$) Suppose that the equivalence holds. Let $a,b\in A$ and $\ult\in \Ul(B)$ be such that $h(a \condi b)\in \ult$. Then, $a\condi b\in h^{-1}[\ult]=f_h(\ult)$ and we get $b\in D_{f_h(\ult)}(\upop a)$. Thus, $T_A(f_h(\ult),\varphi(a))\subseteq \varphi(b)$. By assumption, $  T_B(\ult,f_h^{-1}[\varphi(a)])\subseteq f_h^{-1}[\varphi(b)]$ and it follows that $h(b)\in D_\ult(\upop h(a))$, i.e., $h(a)\condi h(b)\in \ult$. Therefore, we get that $h(a\condi b)\leq h(a)\condi h(b)$. The other inequality is proven in an analogous way. 
\end{proof}

Making use of Lemma \ref{lem:c-homo-via-T} we introduce the following
\begin{definition}
Let $\frX_1\defeq\langle X_1,\tau_1,T_1\rangle$ and $\frX_2\defeq\langle X_2,\tau_2,T_2\rangle$ be conditional spaces. A continuous function $f\colon X_1\to X_2$ between Boolean spaces $\langle X_1,\tau_1\rangle$ and $\langle X_2,\tau_2\rangle$ is a \emph{conditional} function if for all $x\in X_1$ and $U,V\in \clopen(\tau_2)$
\begin{align*}\label{CF}\tag{$\mathrm{CF}$}
 T_1(x,f^{-1}[U])\subseteq f^{-1}[V]\quad\text{iff}\quad T_2(f(x),U)\subseteq V.
\end{align*}\QED
\end{definition}
Observe that the identity function trivially satisfies \eqref{CF}.

\begin{lemma}\label{conhom}
    Condition \eqref{CF} is equivalent to
\[
h_f(U\condi_{T_2}V)=h_f(U)\condi_{T_1}h_f(V)
\]
for all $U,V \in \clopen(\tau_2)$, i.e., the function $h_f\colon \clopen(\tau_2) \to \clopen(\tau_1)$ is a conditional homomorphism. 
\end{lemma}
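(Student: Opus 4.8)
The plan is to unfold both sides of the equation
\[
h_f(U\condi_{T_2}V)=h_f(U)\condi_{T_1}h_f(V)
\]
into explicit subsets of $X_1$ and to observe that the resulting set identity, read pointwise, is verbatim condition~\eqref{CF}. The central simplification is Proposition~\ref{prop:monotonicity-for-T}: since $\frX_1$ and $\frX_2$ are conditional spaces, $T_1$ and $T_2$ are upward closed in the respective families of clopens, so for clopen arguments we may write $U\condi_{T_i}V=\{x:T_i(x,U)\subseteq V\}$. I would also note at the outset that, $f$ being continuous between Boolean spaces, $h_f(U)=f^{-1}[U]$ is clopen whenever $U$ is; this legitimizes applying the simplified formula to the right-hand side, whose first argument is $f^{-1}[U]$.

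Next I would carry out the two computations. For the left-hand side, pulling back along $f$ and then using Proposition~\ref{prop:monotonicity-for-T} in $\frX_2$,
\[
h_f(U\condi_{T_2}V)=f^{-1}[U\condi_{T_2}V]=\{x\in X_1:T_2(f(x),U)\subseteq V\}\,.
\]
For the right-hand side, using Proposition~\ref{prop:monotonicity-for-T} in $\frX_1$ with the clopen arguments $f^{-1}[U],f^{-1}[V]$,
\[
h_f(U)\condi_{T_1}h_f(V)=f^{-1}[U]\condi_{T_1}f^{-1}[V]=\{x\in X_1:T_1(x,f^{-1}[U])\subseteq f^{-1}[V]\}\,.
\]
Hence, for fixed clopen $U,V$, these two sets coincide exactly when, for every $x\in X_1$, one has $T_2(f(x),U)\subseteq V$ iff $T_1(x,f^{-1}[U])\subseteq f^{-1}[V]$.

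Finally I would observe that demanding this set identity for \emph{all} clopen $U,V$ is, after quantifying over $U$, $V$, and $x$, precisely the statement of \eqref{CF}. This delivers both directions of the claimed equivalence simultaneously: \eqref{CF} yields the displayed pointwise biconditional for every $U,V$, hence the set identity, hence that $h_f$ is a conditional homomorphism (recall $h_f$ is already a Boolean homomorphism, so \eqref{homomor} for $\condi_{T_2},\condi_{T_1}$ is exactly the displayed equation); conversely, the homomorphism property forces the set identity for all clopen $U,V$, which unpacks back to \eqref{CF}. There is no genuine obstacle here—the lemma is essentially the translation furnished by Proposition~\ref{prop:monotonicity-for-T}—so the only point requiring care is the appeal to continuity ensuring $f^{-1}[U],f^{-1}[V]\in\clopen(\tau_1)$, which is what permits the simplified conditional on the right-hand side.
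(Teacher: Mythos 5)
Your proposal is correct and follows essentially the same route as the paper's proof: both unfold the two sides via Proposition~\ref{prop:monotonicity-for-T} (using continuity to ensure $f^{-1}[U]$, $f^{-1}[V]$ are clopen) and observe that the resulting pointwise equivalence, quantified over all clopen $U,V$ and all $x$, is exactly condition~\eqref{CF}. The only cosmetic difference is that the paper phrases this as two chains of ``iff''s for the two directions, while you compute both sides as explicit sets once and read off both directions simultaneously.
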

\begin{proof}
    ($\Rarrow$) Suppose that $f$ satisfies \eqref{CF}. Then we have $x\in h_f(U\condi_{T_2}V)$ iff $f(x)\in U\condi_{T_2}V$ iff $T_2(f(x),U)\subseteq V$ iff $T_1(x,f^{-1}[U])\subseteq f^{-1}[V]$ iff $x\in h_f(U)\condi_{T_1}h_f(V)$. Thus, the equality follows.

    \smallskip

    ($\Larrow$) Suppose that $h_f(U\condi_{T_2}V)=h_f(U)\condi_{T_1}h_f(V)$. Then, $T_1(x,f^{-1}[U])\subseteq f^{-1}[V]$ iff $x\in h_f(U)\condi_{T_1}h_f(V)$ iff $x\in h_f(U\condi_{T_2}V)$  iff $x\in f^{-1}[U\condi_{T_2}V]$ iff $T_2(f(x),U)\subseteq V$.
\end{proof}

Conditional algebras with conditional homomorphisms form a category in which the identity arrow is the identity homomorphism and the composition is the usual composition of functions. Also, conditional spaces with conditional functions form a category in which the identity arrow is given by the identity function and the composition is the usual composition of functions. We will denote these categories by $\mathbf{CA}$ and $\mathbf{CS}$, respectively.

Notice that Theorem \ref{th:Es-is-CS} and Lemma \ref{lem:c-homo-via-T} allow to define a contravariant functor $G\colon \mathbf{CA} \to \mathbf{CS}$ as follows:
\begin{align*}
    \langle A,\condi\rangle & \mapsto  \langle \Ul(A),\topos,T_A\rangle\,,\\
    h & \mapsto  f_h.
\end{align*}
Thanks to Proposition \ref{prop:power-set-algebra}, condition \ref{T2} and Lemma \ref{conhom} we can define a contravariant functor $H\colon \mathbf{CS} \to \mathbf{CA} $ in the following way:
\begin{align*}
    \langle X,\tau, T\rangle & \mapsto  \langle \clopen(\tau),\Tcondi\rangle\,,\\
    f & \mapsto  h_f.
\end{align*}

\begin{theorem}
    The categories $\mathbf{CA}$ and $\mathbf{CS}$ are dually equivalent.
\end{theorem}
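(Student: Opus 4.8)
The plan is to upgrade the two object-level isomorphism theorems into natural isomorphisms between the composite functors. Concretely, I would show that the families of Stone maps $\{\varphi_{\frA}\}_{\frA}$ and of evaluation maps $\{\varepsilon_{\frX}\}_{\frX}$ assemble into natural isomorphisms $1_{\mathbf{CA}}\cong H\circ G$ and $1_{\mathbf{CS}}\cong G\circ H$. Since $G$ and $H$ are contravariant functors (well defined on arrows by Lemmas \ref{lem:c-homo-via-T} and \ref{conhom}), the existence of such natural isomorphisms is by definition exactly what it means for $\mathbf{CA}$ and $\mathbf{CS}$ to be dually equivalent, so this reduces the whole theorem to recording the object components and checking naturality.

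First I would record the object components. For each $\frA\in\mathbf{CA}$, the theorem asserting $\frA\cong\Co(\Es(\frA))$ — built on the Representation Theorem \ref{th:RT-for-CCA} — supplies the conditional isomorphism $\varphi_{\frA}\colon\frA\to H(G(\frA))$. For each conditional space $\frX$, Theorem \ref{th:CS-iso-Es} supplies the isomorphism of conditional spaces $\varepsilon_{\frX}\colon\frX\to G(H(\frX))$. These are already isomorphisms in the respective categories — not merely Boolean isomorphisms or homeomorphisms — so no further work is required for the components themselves.

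The remaining, and only substantive, task is naturality. For a conditional homomorphism $h\colon\frA\to\frB$ I would verify $H(G(h))\circ\varphi_{\frA}=\varphi_{\frB}\circ h$. Unwinding the functors, $H(G(h))=h_{f_h}$ sends a clopen $\varphi_{\frA}(a)$ to $f_h^{-1}[\varphi_{\frA}(a)]$, and using $f_h(\ult)=h^{-1}[\ult]$ one computes $f_h^{-1}[\varphi_{\frA}(a)]=\{\ult\in\Ul(B):h(a)\in\ult\}=\varphi_{\frB}(h(a))$, which is the other composite. Dually, for a conditional function $f\colon\frX_1\to\frX_2$ I would check $G(H(f))\circ\varepsilon_{\frX_1}=\varepsilon_{\frX_2}\circ f$; unwinding $G(H(f))=f_{h_f}$ and using $h_f(U)=f^{-1}[U]$ gives $f_{h_f}(\varepsilon_{\frX_1}(x))=\{V\in\clopen(\tau_2):f(x)\in V\}=\varepsilon_{\frX_2}(f(x))$.

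The point to stress, and the reason there is essentially no obstacle beyond bookkeeping, is that both naturality squares are precisely the classical naturality squares of Stone duality for the underlying Boolean algebras and Boolean spaces; since every morphism appearing in them is in particular a Boolean homomorphism (respectively a continuous map), their commutativity is inherited from that setting. All the genuinely new content — that $G$ and $H$ act correctly on arrows and that the components $\varphi_{\frA},\varepsilon_{\frX}$ respect the conditional structure — has already been secured by the cited lemmas and isomorphism theorems, so the naturality check is purely computational. The only mild care needed is to confirm that the domains and codomains of $H(G(h))$ and $f_{h_f}$ are indeed the expected composite objects, which is immediate from the definitions of the two functors.
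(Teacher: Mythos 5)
Your proposal is correct and takes essentially the same approach as the paper: the paper's proof likewise uses the Stone map $\varphi$ (via Theorem \ref{th:RT-for-CCA}) and the evaluation map $\varepsilon$ (via Theorem \ref{th:CS-iso-Es}) as the components of natural isomorphisms between $H\circ G$ and $\mathrm{Id}_{\mathbf{CA}}$, and between $G\circ H$ and $\mathrm{Id}_{\mathbf{CS}}$. The only difference is one of detail: you explicitly verify the naturality squares (correctly reducing them to the classical Stone-duality computations $f_h^{-1}[\varphi(a)]=\varphi(h(a))$ and $f_{h_f}(\varepsilon(x))=\varepsilon(f(x))$), whereas the paper asserts naturality without writing out the computation.
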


\begin{proof}
    Let $\langle A,\condi\rangle\in \CA$. By Theorem \ref{th:RT-for-CCA} the map $\varphi\colon A\to \clopen(\topos)$ defines a natural isomorphism between $H \circ G$ and $\mathrm{Id}_{\mathbf{CA}}$. On the other hand, if $\langle X,\tau,T\rangle$ is a conditional space, then from Theorem \ref{th:CS-iso-Es} we get that the map $\varepsilon\colon X \to \Ul(\clopen(\tau))$ leads to a natural isomorphism between $G \circ H$ and $\mathrm{Id}_{\mathbf{CS}}$. This concludes the proof.
\end{proof}

\section{Characterization of conditional subalgebras}

In this section, we present a characterization of the subalgebras of
conditional algebras.

\begin{definition}
 Let $E$ be an equivalence relation on a Boolean space
$X$, end let $E(x)\defeq\{y\in X\mid E(x,y)\}$. A subset $U$ of $X$ is \emph{closed under} $E$ if
$x\in U$ implies $E(x)\subseteq U$. An equivalence relation $E$ on $X$ is said to be a \emph{Boolean
equivalence} if for each pair $\klam{x,y}\notin E$ there
exists a~clopen subset $U$ of $X$ that is closed under $E$ and such that $x\in U$
and $y\notin U$. 
\end{definition}

For $U\subseteq X$ let
\[
E(U)\defeq\{y\in X\mid(\exists x\in U)\,E(x,y)\}=\bigcup_{x\in U}E(x)\,.
\]

If $A$ is a Boolean algebra and $B$ is a subalgebra of $A$,
then it is easy to show that the relation
\[
E_{B}\defeq\left\{\klam{u,v}\in\Ul(A)\times\Ul(A):u\cap B=v\cap B\right\} 
\]
 is a Boolean equivalence. Conversely, if $E$ is a Boolean equivalence
of a Boolean space $X$ then 
\[
B_{E}\defeq\left\{U\in\clopen(\tau)\colon E(U)=U\right\} 
\]
 is the domain of a subalgebra of the Boolean algebra $\clopen(\tau)$.
Moreover, the correspondence $B\mapsto E_{B}$ is a dual isomorphism
between the lattice of domains of the subalgebras of $A$ and the
lattice of the Boolean equivalence of the Stone space of $A$ (see \citealp{Koppelberg-TD}).

Our next aim is to prove that there is a bijective correspondence
between subalgebras of conditional algebras and certain Boolean
equivalences with an additional condition.

\begin{definition}
  Let $E$ be a Boolean equivalence on a space $X$. We define the binary relation $\preceq_{E}$ on $\closed(\tau)$
as follows: $Y\preceq_{E} C$ if and only if for every $y\in Y$ there exists $x\in X$ such that $x\in C$
and $E(x,y)$.   
\end{definition}

\begin{lemma} \label{lemma_B} Let $A$ be a Boolean algebra and
let $B$ be a subalgebra of $A$. Let $\fil,\filH\in\Fil(A)$. Then
\[\varphi(\fil)\preceq_{E_B}\varphi(\filH)\qquad\text{iff}\qquad \filH\cap B\subseteq\fil\,.\]
\end{lemma}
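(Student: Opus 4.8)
The plan is to unfold both sides of the biconditional into statements about ultrafilters and then treat each implication directly. By the definition of $\preceq_{E_B}$ together with the description of $\varphi(\fil)$ as the set of ultrafilters extending $\fil$, the relation $\varphi(\fil)\preceq_{E_B}\varphi(\filH)$ says precisely that for every ultrafilter $\ult$ with $\fil\subseteq\ult$ there is an ultrafilter $\ultV$ with $\filH\subseteq\ultV$ and $\ult\cap B=\ultV\cap B$. The observation that drives the whole argument is that, because $B$ is a subalgebra (hence closed under $\neg$), \emph{any} ultrafilter $\ultV$ containing $\filH\cup(\ult\cap B)$ automatically satisfies $\ultV\cap B=\ult\cap B$: the inclusion $\ult\cap B\subseteq\ultV\cap B$ is immediate, while for $b\in B$ with $b\notin\ult$ we have $\neg b\in\ult\cap B\subseteq\ultV$, which forces $b\notin\ultV$.

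For the implication from right to left I would assume $\filH\cap B\subseteq\fil$ and fix an arbitrary ultrafilter $\ult\supseteq\fil$. By the observation above it suffices to show that the filter generated by $\filH\cup(\ult\cap B)$ is proper, for then any ultrafilter $\ultV$ extending it witnesses $\varphi(\fil)\preceq_{E_B}\varphi(\filH)$. If this filter were improper, there would be $h\in\filH$ and $b\in\ult\cap B$ with $h\wedge b=0$, i.e.\ $h\leq\neg b$; since $\filH$ is upward closed and $\neg b\in B$, this gives $\neg b\in\filH\cap B\subseteq\fil\subseteq\ult$, contradicting $b\in\ult$. This properness check is the heart of the direction and the only place the hypothesis is used.

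For the converse I would argue contrapositively. Suppose some $b\in\filH\cap B$ fails to lie in $\fil$. Since $b\notin\fil$ and $\fil$ is upward closed, no element of $\fil$ is below $b$, so $\fil\cup\{\neg b\}$ generates a proper filter; extending it yields an ultrafilter $\ult\supseteq\fil$ with $\neg b\in\ult$, whence $b\notin\ult$. By the assumed relation there is an ultrafilter $\ultV\supseteq\filH$ with $\ult\cap B=\ultV\cap B$; but $b\in\filH\subseteq\ultV$ and $b\in B$, so $b\in\ultV\cap B=\ult\cap B\subseteq\ult$, a contradiction. Hence $\filH\cap B\subseteq\fil$.

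The only genuine obstacle is recognising the characterisation of $\ultV\cap B$ in the first paragraph and then arranging the two filter-extension arguments so that the hypothesis enters exactly at the properness step in each direction; everything else reduces to the standard ultrafilter extension lemma and the closure of $B$ under complements.
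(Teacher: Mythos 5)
Your proposal is correct and follows essentially the same route as the paper's proof: the right-to-left direction hinges on the finite intersection property of $\filH\cup(\ult\cap B)$, with the hypothesis $\filH\cap B\subseteq\fil$ entering exactly at that properness step, and the left-to-right direction is the same contradiction argument via an ultrafilter extending $\fil$ that omits the offending element $b$. The only cosmetic difference is that you derive $\ultV\cap B=\ult\cap B$ directly from closure of $B$ under complementation, whereas the paper appeals to maximality of $\ult\cap B$ and $\ultV\cap B$ as ultrafilters of $B$ --- these are the same fact.
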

\begin{proof}
($\Rarrow$) Suppose that $\varphi(\fil)\preceq_{E_B}\varphi(\filH)$. Let $a\in \filH\cap B$ and $a\notin \fil$.
Then there is $u\in\Ul(A)$ such that $a\notin u$ and $\fil\subseteq u$.
Since $\varphi(\fil)\preceq_{E_B}\varphi(\filH)$, there exists $v\in\Ul(A)$ such that $\filH\subseteq v$
and $v\cap B=u\cap B$. Thus, $a\in \filH\cap B\subseteq v\cap B=u\cap B\subseteq u$,
so $a\in u$, which is a contradiction. Hence, $\filH\cap B\subseteq \fil$.

\smallskip

($\Larrow$) Conversely, suppose that $\filH\cap B\subseteq\fil$. Let $u\in\Ul(A)$ be such that $\fil\subseteq u$.
Then $\fil\cap B\subseteq u$, so it follows from hypothesis that $\filH\cap B\subseteq u$.
Observe that the set $\filH\cup\left(u\cap B\right)$ has the finite intersection property. Indeed, if there are $a\in \filH$
and $b\in u\cap B$ such that $a\wedge b=0$, then $a\leq\neg b$ and
$\neg b\in \filH$. Since $B$ is a Boolean subalgebra and $b\in B$,
we get that $\neg b\in \filH\cap B\subseteq \fil\subseteq u$, which is a contradiction.
In consequence, there exists an ultrafilter $v$ such that
$\filH\subseteq v$ and $u\cap B\subseteq v$. As $u\cap B\subseteq v\cap B$,
and $u\cap B$, $v\cap B\in\Ul(B)$, we obtain that $u\cap B=v\cap B$, i.e., $E_B(u,v)$. Thus,
$\varphi(\fil)\preceq_{E_B}\varphi(\filH)$, as required.
\end{proof}
\begin{definition}
     Let $\frX=\langle X,\tau, T\rangle$ be a conditional space and let $E$ be a Boolean equivalence on the space $X$. We say that $E$ is a \emph{C-equivalence} if it satisfies the following condition:
     for each $x,x',y\in X$ and each $Y\in \closed(\tau)$, if $E(x,y)$
and $T(x,Y,x')$, then there exist $y'\in X$ such that $E(x',y')$ and
$C\in \closed(\tau)$ such that $T(y,C,y')$ and $C\preceq_{E}Y$.
\end{definition}

\begin{theorem}\label{th:chara_subalgebra}
Let $\frA\defeq\langle A,\condi\rangle,\frB\defeq\langle B,\condi\rangle\in\CA$. Suppose
that $\frB$ is a Boolean subalgebra of $\frA$. Then, the following conditions
are equivalent\/\textup{:}
\begin{enumerate}
\item $E_B$ is a C-equivalence.
\item $\frB$ is a subalgebra of $\frA$.
\end{enumerate}
\end{theorem}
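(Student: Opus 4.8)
The plan is to run everything through the dual correspondence between subalgebras of $A$ and Boolean equivalences of $\Ul(A)$ recalled before the statement, together with the translation of $\preceq_{E_B}$ furnished by Lemma~\ref{lemma_B} and the defining equivalence $T_A(\ult,\varphi(\fil),\ultV)$ iff $\D{\ult}{\fil}\subseteq\ultV$ from~\eqref{df:T_A}. First I would record a reformulation of (2): since $\frB$ is already a Boolean subalgebra, it is a conditional subalgebra exactly when $B$ is closed under $\condi$; and because $a\in B$ iff $\varphi(a)$ is closed under $E_B$ (the content of the dual correspondence recalled above) and $\varphi(a\condi b)=\varphi(a)\TAcondi\varphi(b)$ by the Representation Theorem~\ref{th:RT-for-CCA}, condition (2) is equivalent to the assertion that $\varphi(a)\TAcondi\varphi(b)$ is closed under $E_B$ whenever $a,b\in B$. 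This is the form in which I would establish the equivalence with (1).

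For the implication (2)$\Rightarrow$(1), suppose $B$ is closed under $\condi$ and take $x,x',y$ and a closed $Y=\varphi(\fil)$ with $E_B(x,y)$ and $T_A(x,\varphi(\fil),x')$, i.e.\ $\D{x}{\fil}\subseteq x'$. I would set $\filG\defeq\upop(\fil\cap B)$, the filter of $A$ generated by $\fil\cap B$, and $C\defeq\varphi(\filG)$; since $\fil\cap B\subseteq\filG$, Lemma~\ref{lemma_B} delivers $C\preceq_{E_B}Y$ immediately. The substantive step is to produce the point $y'$. I would show that $\D{y}{\filG}\cup(x'\cap B)$ has the finite intersection property: if $d\wedge b=0$ with $d\in\D{y}{\filG}$ and $b\in x'\cap B$, pick $c\in\filG$ with $c\condi d\in y$ and $c_0\in\fil\cap B$ with $c_0\leq c$; then \eqref{eq:cond-order-preserving} and \eqref{eq:cond-order-reversing} give $c_0\condi\neg b\in y$, and because $c_0,\neg b\in B$ and $B$ is closed under $\condi$, $c_0\condi\neg b\in y\cap B=x\cap B\subseteq x$, so $\neg b\in\D{x}{\fil}\subseteq x'$, contradicting $b\in x'$. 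Any ultrafilter $y'$ extending $\D{y}{\filG}\cup(x'\cap B)$ then satisfies $T_A(y,C,y')$, and since $x'\cap B\subseteq y'$ forces $x'\cap B=y'\cap B$ (both being ultrafilters of $B$), also $E_B(x',y')$; this is precisely what the C-equivalence condition demands.

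For (1)$\Rightarrow$(2), I would fix $a,b\in B$, write $U=\varphi(a)$, $V=\varphi(b)$, and show $U\TAcondi V$ is $E_B$-closed. Take $x\in U\TAcondi V$ with $E_B(x,y)$ and an arbitrary $y'\in T_A(y,U)$; using symmetry of $E_B$ and applying the C-equivalence condition to $E_B(y,x)$ and $T_A(y,U,y')$ yields a point $x'$ and a closed $C$ with $E_B(y',x')$, $T_A(x,C,x')$ and $C\preceq_{E_B}U$. Since $a\in B\cap\upop a$, Lemma~\ref{lemma_B} turns $C\preceq_{E_B}U=\varphi(a)$ into $C\subseteq\varphi(a)=U$; then upward closure of $T_A$ (Proposition~\ref{prop:monotonicity-for-T}) upgrades $T_A(x,C,x')$ to $T_A(x,U,x')$, whence $x'\in V$ because $x\in U\TAcondi V$. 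Finally $V=\varphi(b)$ is $E_B$-closed (as $b\in B$) and $E_B(y',x')$, so $y'\in V$; as $y'$ was arbitrary, $T_A(y,U)\subseteq V$, that is, $y\in U\TAcondi V$. By the reformulation above this gives $a\condi b\in B$, so $\frB$ is a subalgebra.

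The main obstacle I anticipate is the finite-intersection-property argument in (2)$\Rightarrow$(1): it is the only place where closure of $B$ under $\condi$ is genuinely used, and it requires chaining the two monotonicity laws in the right order to pull the witness $c_0\condi\neg b$ back into $B$ and then across $E_B$. A secondary point to handle with care is the bookkeeping around the asymmetry of the C-equivalence condition in (1)$\Rightarrow$(2), where one must feed it $E_B(y,x)$ rather than $E_B(x,y)$, together with the verification that $\preceq_{E_B}$ collapses to plain set inclusion once the antecedent is the clopen $\varphi(a)$ with $a\in B$.
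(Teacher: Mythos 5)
Your proof is correct, and it splits naturally into one half that matches the paper and one half that genuinely diverges. Your direction (2)$\Rightarrow$(1) is the paper's own argument in different clothing: the paper also takes $\filG\defeq\upop(\fil\cap B)$ and proves that $\D{w}{\filG}$ is disjoint from the ideal $\downop(v^{c}\cap B)$, which is exactly your finite-intersection-property claim for $\D{y}{\filG}\cup(x'\cap B)$ stated contrapositively; the key computation is identical in both (use \eqref{eq:cond-order-reversing} to replace the antecedent by an element of $\fil\cap B$, use \eqref{eq:cond-order-preserving} to pass to a consequent lying in $B$, use closure of $B$ under $\condi$ to land in $y\cap B=x\cap B$, then apply $\D{x}{\fil}\subseteq x'$), and your reduction of general finite meets to the form $d\wedge b$ is licensed by Lemma~\ref{lem:D-filter}, which makes $\D{y}{\filG}$ a filter. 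The direction (1)$\Rightarrow$(2) is where the routes differ. The paper argues by contradiction: assuming $a\condi b\notin B$, it separates the filter generated by $\upop(a\condi b)\cap B$ from the ideal $\downop(a\condi b)$ to produce an ultrafilter $u$ with $a\condi b\notin u$, unfolds this via Lemma~\ref{lem:existence-for-representation} into a triple $T_A(u,\varphi(\filG),v)$ with $a\in\filG$ and $b\notin v$, then builds a second ultrafilter $w$ with $E_B(u,w)$ and $a\condi b\in w$, and only at that point invokes the C-equivalence to reach a contradiction. You instead translate ``$B$ is closed under $\condi$'' into ``$\varphi(a)\TAcondi\varphi(b)=\varphi(a\condi b)$ is $E_B$-closed for all $a,b\in B$'' using Theorem~\ref{th:RT-for-CCA} together with the Koppelberg correspondence, and then verify this closedness directly by one application of the C-equivalence (correctly fed with $E_B(y,x)$ rather than $E_B(x,y)$), Lemma~\ref{lemma_B} to collapse $C\preceq_{E_B}\varphi(a)$ to $C\subseteq\varphi(a)$, and upward closedness of $T_A$ (Proposition~\ref{prop:monotonicity-for-T}). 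The two applications of the C-equivalence are structurally the same; the real difference is that the paper's construction of the pair $u,w$ is in effect an inline proof of the one nontrivial fact you import from the recalled dual isomorphism, namely that a clopen set closed under $E_B$ must equal $\varphi(c)$ for some $c\in B$ (this does follow from injectivity of $B\mapsto E_B$, so your citation is legitimate). What each approach buys: yours is shorter and more conceptual, avoids Lemma~\ref{lem:existence-for-representation} entirely, and isolates clearly where the duality is used; the paper's is self-contained on that point, re-deriving the needed fragment of the correspondence by explicit filter--ideal separations instead of citing it.
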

\begin{proof}
$(1)\Rarrow(2)$ Let (1) hold and assume towards a contradiction that there are $a,b\in B$ such that $a\condi b\notin B$.
Consider the filter $\fil$ generated by $\upop (a\condi b)\cap B$ and the principal ideal $\ide\defeq\downop(a\condi b)$. Suppose there is $c\in\ide\cap\fil$. Thus, $c\leq a\condi b$ and there is a $d\in\upop(a\condi b)\cap B$ such that $d\leq c$.
Then $d=a\condi b$, and in consequence $a\condi b\in B$, which is a contradiction. Thus,
$\ide\cap \fil=\emptyset$. Let $u\in\Ul(A)$ be such that $\ide\cap u=\emptyset$
and $\fil\subseteq u$. So $a\condi b\notin u$ and $\upop(a\condi b)\cap B\subseteq u$.
The former condition together with Lemma~\ref{lem:existence-for-representation} entail existence of $v\in\Ul(A)$ and $\filG\in\Fi(A)$ such that ($\dagger$) $T_A(u,\varphi(\filG),v)$,
$a\in \filG$ and $b\notin v$. On the other hand, $B\cap u^c$ is closed under finite joins, so the ideal (in $A$) generated by the intersection is just $\downop (B\cap u^c)$. Therefore, the latter condition entails that $a\condi b$ is not an element of the ideal.
Then, there exists $w\in\Ul(A)$ such that $B\cap u^{c}\cap w=\emptyset$
and $a\condi b\in w$. Thus, ($\ddagger$) $u\cap B=w\cap B$.
By ($\dagger$) and ($\ddagger$) there exist $z\in\Ul(A)$
and $\filH\in\Fil(A)$ such that $T_A(w,\varphi(\filH),z)$ and
$\filG\cap B\subseteq \filH$. As $a\in \filG\cap B$, we get $a\in \filH$, and as
$a\condi b\in w$, we have $b\in z$. So, $b\in z\cap B=v\cap B$,
i.e., $b\in v$, which is a contradiction. Thus, $a\condi b\in B$,
and consequently $\langle B,\condi\rangle$ is a subalgebra of $\langle A,\condi\rangle$.

\smallskip

$(2)\Rarrow(1)$ Suppose $\langle B,\condi\rangle$ is a subalgebra of $\langle A,\condi\rangle$. Let $u,v$ and $w$ be ultrafilters of $A$. Take an arbitrary $\fil\in\Fil(A)$, and assume that
$u\cap B=w\cap B$ and $T_{A}(u,\varphi(\fil),v)$, i.e., $\D{u}{\fil}\subseteq v$.
Since $\fil\cap B$ is closed under finite meets, $\filG\defeq\upop(\fil\cap B)$ is a filter. We will prove that $\D{w}{\filG}\cap\downop(v^{c}\cap B)$ is empty. Suppose otherwise, and take an element $c$ from the intersection.
Then there exist $a$ and $d$ such that: $a\in \fil\cap B$ (by \ref{eq:cond-order-reversing}), $d\notin v$ and $d\in B$, $a\condi c\in w$ and $c\leq d$. By \eqref{eq:cond-order-preserving}, $a\condi d\in w$.
As $a,d\in B$, $a\condi d\in B\cap w=u\cap B$, i.e.,
$a\condi d\in u$. Since $a\in \fil$ and $\D{u}{\fil}\subseteq v$,
we obtain that $d\in v$, which is a contradiction. Thus, $\D{w}{\filG}\cap\downop(v^{c}\cap B)=\emptyset$.
Then there exists $z\in\Ul(A)$ such that $\D{w}{\filG}\subseteq z$ and
$v\cap B=z\cap B$. By its construction, we have that $\fil\cap B\subseteq\filG$.
In this way we have proven that there are $z\in\Ul(A)$ and $\filG\in\Fil(A)$
such that $T_A(w,\varphi(\filG),z)$ and $\varphi(\filG)\preceq_{E_B}\varphi(\fil)$, as required.
\end{proof} 
\begin{theorem} Let $\frA\in\CA.$ The correspondence
$B\mapsto E_{B}$ is a dual isomorphism between the lattice of domains
of subalgebras of $\frA$ and the lattice of the
C-equivalences of the expanded Stone space of $\frA$.
\end{theorem}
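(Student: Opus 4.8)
The plan is to derive the statement from Theorem~\ref{th:chara_subalgebra} together with the classical Boolean duality of \citep{Koppelberg-TD} recalled above, according to which $B\mapsto E_B$ is a dual isomorphism between the lattice of domains of Boolean subalgebras of $A$ and the lattice of Boolean equivalences of $\Ul(A)$, with inverse $E\mapsto B_E$. Since conditional subalgebras are in particular Boolean subalgebras, and C-equivalences are in particular Boolean equivalences, it suffices to show that this dual isomorphism cuts down to a bijection between the domains of conditional subalgebras and the C-equivalences of $\Es(\frA)$, and that the cut-down map is again a dual isomorphism of lattices.

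First I would match the two collections under $B\mapsto E_B$. The implication $(2)\Rarrow(1)$ of Theorem~\ref{th:chara_subalgebra} says that whenever $\langle B,\condi\rangle$ is a subalgebra of $\frA$, the equivalence $E_B$ is a C-equivalence; hence $B\mapsto E_B$ carries domains of conditional subalgebras into C-equivalences. For the opposite direction, let $E$ be an arbitrary C-equivalence. As $E\mapsto B_E$ inverts $B\mapsto E_B$, we have $E=E_{B_E}$, so $E_{B_E}$ is a C-equivalence, and the implication $(1)\Rarrow(2)$ of Theorem~\ref{th:chara_subalgebra} forces $\langle B_E,\condi\rangle$ to be a conditional subalgebra. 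Thus $E\mapsto B_E$ carries C-equivalences into domains of conditional subalgebras, and the two restricted maps are mutually inverse bijections between the domains of conditional subalgebras and the C-equivalences.

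Next I would record the lattice structure. The domains of conditional subalgebras are closed under arbitrary intersections, since $a,b\in\bigcap_i B_i$ gives $a\condi b\in B_i$ for each $i$; together with $A$ being a conditional subalgebra of itself, this makes them a closure system and hence a complete lattice under inclusion. Because $B\mapsto E_B$ is an order anti-isomorphism on the full lattices, the biconditional $B_1\subseteq B_2\iff E_{B_2}\subseteq E_{B_1}$ persists under restriction, so the cut-down bijection is an order anti-isomorphism of posets. Transporting the complete-lattice structure along it shows that the C-equivalences form a complete lattice and that the restriction sends meets to joins and joins to meets, i.e., is a dual isomorphism of lattices.

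The argument is essentially bookkeeping layered on Theorem~\ref{th:chara_subalgebra}, where the substantive work---characterising closure under $\condi$ by the C-equivalence condition---was already done. The only step needing a moment of care is showing that $E\mapsto B_E$ returns conditional subalgebras, which is precisely where the Koppelberg identity $E=E_{B_E}$ is combined with the $(1)\Rarrow(2)$ direction of the theorem; everything else is inherited from the underlying Boolean duality.
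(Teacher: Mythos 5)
Your proposal is correct and follows exactly the route the paper intends: the paper states this theorem without proof, as an immediate consequence of Theorem~\ref{th:chara_subalgebra} combined with Koppelberg's classical dual isomorphism $B\mapsto E_B$ between Boolean subalgebra domains and Boolean equivalences, which is precisely your argument. Your write-up is in fact more careful than the paper, since you explicitly verify the two points the paper leaves tacit---that $E\mapsto B_E$ lands back in conditional subalgebra domains via $E=E_{B_E}$ and the $(1)\Rightarrow(2)$ direction, and that both restricted families are genuinely complete lattices (closure system on one side, transported structure on the other) rather than sublattices of the ambient Boolean ones.
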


\section{Congruences of conditional algebras}

This section aims to study the congruences of conditional algebras. We use the categorical duality
developed in Section~\ref{sec:categorical-duality} in order to prove that for every conditional
algebra $\frA$ there is a dual isomorphism between
the congruences of $A$ and a family of closed subsets of
the dual space of $\frA$. 

Let $\Cong(A)$ be the lattice of congruences of an algebra $A$. For every $a\in A$ and $\theta\in\Cong(A)$, $a/\theta$ is an equivalence class of $a$ with respect to $\theta$.  $A/\theta$ is the quotient algebra of $A$. According to a well-known folklore result on  the Stone duality,  for every closed subset $Y$ of $\Ul(A)$, the set
\[
\theta(Y)\defeq\{\klam{a,b}\in A\times A\colon Y\cap\varphi(a)=Y\cap\varphi(b)\} 
\]
is a congruence on $A$. Moreover, the assignment $Y\mapsto\theta(Y)$
establishes a dual isomorphism between the lattice of the closed subsets of the Stone space $\Ul(A)$ and the lattice $\Cong(A)$.

Let $\langle X,\tau,T\rangle$ be a conditional space. For any pair of elements $x$ and $y$ we define
\[
\mathbf{C}(x,y)\defeq\{Z\in\closed(\tau)\mid T(x,Z,y)\}.
\]
We will see that it is closed under intersection of chains. Let us consider a chain $\mathcal{C}$ of members of $\mathbf{C}(x,y)$. Suppose $\bigcap \mathcal{C}\notin \mathbf{C}(x,y)$. Then, there exists $U\in \clopen(\tau)$ such that $T^c(x,U,y)$ and $\bigcap \mathcal{C}\subseteq U$. By compactness of $U$, we get that there exists $C\in \mathcal{C}$ such that $C\subseteq U$. But by \ref{T3}, it follows $T(x,U,y)$, a contradiction.

So, from the Kuratowski-Zorn lemma, we get that the set $\{X \setminus C : C \in \mathcal{C}\}$ has maximal elements (with respect to set inclusion), provided it is non-empty. Dually, $\mathcal{C}$ has minimal elements, provided it is non-empty. Therefore, we can introduce the following definition.

\begin{definition}
    Let $\langle X,\tau,T\rangle$ be a conditional space and let $Y\in\closed(\tau)$.  We say that $Y$ is a \emph{$T$-closed set} if
for all $x,y\in X$, if $x\in Y$ and $Z$ is minimal in $\mathbf{C}(x,y)$, then $Z\subseteq Y$ and $y\in Y$.\QED  
\end{definition}

In the proof of the next theorem, we will use the following standard result.
\begin{lemma}
    If $Y$ is a closed subset of the Stone space of $A$ and $\klam{a,b}\in\theta(Y)$, then there exists an $x\in\fil_Y$ such that $a\wedge x=b\wedge x$.
\end{lemma}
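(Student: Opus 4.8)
The plan is to exhibit the witness $x$ explicitly as the Boolean biconditional of $a$ and $b$. First I would unfold the hypothesis: $\klam{a,b}\in\theta(Y)$ means $Y\cap\varphi(a)=Y\cap\varphi(b)$. Since $\varphi$ is a Boolean homomorphism, it preserves symmetric differences, so $\varphi(a)\,\triangle\,\varphi(b)=\varphi(a\triangle b)$, where $a\triangle b\defeq(a\wedge\neg b)\vee(\neg a\wedge b)$. Hence the set equality $Y\cap\varphi(a)=Y\cap\varphi(b)$ is equivalent to the single disjointness statement $Y\cap\varphi(a\triangle b)=\emptyset$.

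Next I would take $x\defeq\neg(a\triangle b)=(a\wedge b)\vee(\neg a\wedge\neg b)$. The disjointness $Y\cap\varphi(a\triangle b)=\emptyset$ says exactly that $Y\subseteq\varphi(a\triangle b)^c=\varphi(\neg(a\triangle b))=\varphi(x)$, and by the definition of $\fil_Y=\{z\in A:Y\subseteq\varphi(z)\}$ this is precisely the statement $x\in\fil_Y$. It then remains to verify the equation, which is a one-line Boolean computation: $a\wedge x=a\wedge[(a\wedge b)\vee(\neg a\wedge\neg b)]=a\wedge b$ and likewise $b\wedge x=a\wedge b$, whence $a\wedge x=b\wedge x$.

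I do not expect a genuine obstacle here: the entire content is the observation that $\klam{a,b}\in\theta(Y)$ is the same as $Y$ being disjoint from $\varphi(a\triangle b)$, after which the choice $x=\neg(a\triangle b)$ is forced and the remaining verification is purely equational. The only step worth stating explicitly is the identity $\varphi(a)\triangle\varphi(b)=\varphi(a\triangle b)$, which follows at once from $\varphi$ being a Boolean embedding.
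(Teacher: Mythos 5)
Your proof is correct, and it takes a genuinely different and more economical route than the paper's. The paper argues filter-theoretically: it shows that $Y\cap\varphi(a)$, being closed, equals $\varphi(\fil_Y^a)$ where $\fil_Y^a$ is the filter generated by $\fil_Y\cup\{a\}$ (this step needs an ultrafilter-separation argument), deduces $\fil_Y^a=\fil_Y^b$ from $\klam{a,b}\in\theta(Y)$, extracts $y_1,y_2\in\fil_Y$ with $y_1\wedge a\leq b$ and $y_2\wedge b\leq a$, and finally takes $x\defeq y_1\wedge y_2$. You instead exhibit the witness outright as $x\defeq\neg(a\mathbin{\triangle}b)=(a\wedge b)\vee(\neg a\wedge\neg b)$, reducing everything to the set identity that $Y\cap\varphi(a)=Y\cap\varphi(b)$ iff $Y\cap\bigl(\varphi(a)\mathbin{\triangle}\varphi(b)\bigr)=\emptyset$, together with $\varphi(a)\mathbin{\triangle}\varphi(b)=\varphi(a\mathbin{\triangle}b)$. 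Your argument buys three things: it is purely equational, it avoids any appeal to ultrafilter existence (the prime ideal theorem), and it never uses closedness of $Y$ --- so it actually proves the stronger statement for arbitrary $Y\subseteq\Ul(A)$, since $\fil_Y$ and $\theta(Y)$ make sense for any $Y$. What the paper's longer route buys is the structural identification $\varphi(\fil_Y^a)=Y\cap\varphi(a)$, i.e., that the filter of the closed set $Y\cap\varphi(a)$ is exactly the filter generated by $\fil_Y\cup\{a\}$, a fact of independent use in Stone-duality bookkeeping; but for the lemma as stated, your proof is the sharper one.
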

\begin{proof}
If $Y$ is closed, then $Y\cap\varphi(a)$ is closed too, so there is a filter $\fil_1$ such that  $Y\cap\varphi(a)=\varphi(\fil_1)$. Consider the filter $\fil_Y^a$ generated by $\fil_Y\cup\{a\}$. We aim to show that $\fil_1=\fil_Y^a$.

\smallskip

We have that $\varphi(\fil_1)\subseteq Y$ and $\varphi(\fil_1)\subseteq \varphi(a)$. Thus $\fil_Y\subseteq\fil_1$ and $a\in\fil_1$. So $\fil_Y\cup\{a\}\subseteq\fil_1$ and $\fil_Y^a\subseteq\fil_1$. For the other direction, assume that $\fil_1\nsubseteq \fil_Y^a$. Therefore there exists an ultrafilter $\ult$ such that $\fil_Y^a\subseteq\ult$ and $\fil_1\nsubseteq\ult$. But $\ult\in\varphi(a)\cap Y=\varphi(\fil_1)$ and thus $\ult\in\varphi(\fil_1)$, a contradiction.

In an analogous way we demonstrate that  the filter $\fil_2$ such that $Y\cap\varphi(b)=\varphi(\fil_2)$ is equal to $\fil_Y^b$, the filter generated by $\fil_Y\cup\{b\}$.

\smallskip

Thus $\fil_Y^a=\fil_Y^b$. Since $b\in\fil_Y^a$, there is $y_1\in\fil_Y$ such that $y_1\wedge a\leq b$. Analogously, there is $y_2\in\fil_Y$ such that $y_1\wedge b\leq a$. In consequence:
\[a\wedge y_1\wedge y_2=b\wedge y_1\wedge y_2\]
and $y_1\wedge y_2\in\fil_Y$.
\end{proof}

\begin{theorem} \label{cong and M-closed} Let $\frA\defeq\langle A,\condi\rangle\in\CA.$
Let $Y$ be a closed subset of the Stone space of $A$. Then
the following conditions are equivalent\/\textup{:}
\begin{enumerate}
\item $\theta(Y)$ is a congruence on $\frA$, 
\item $Y$ is a $T$-closed set.
\end{enumerate}
\end{theorem}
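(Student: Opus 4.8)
The plan is to exploit that $\theta(Y)$ is automatically a Boolean congruence, so condition (1) reduces to verifying compatibility of $\theta(Y)$ with $\condi$; and compatibility with a binary operation is, as usual, equivalent to the two one-variable compatibilities. Writing $a\sim a'$ for $Y\cap\varphi(a)=Y\cap\varphi(a')$, these are: for all $u\in Y$, (I) $b\sim b'$ implies $a\condi b\in u\iff a\condi b'\in u$, and (II) $a\sim a'$ implies $a\condi b\in u\iff a'\condi b\in u$. The one computational tool I would use throughout is the pointwise description of $\condi$ on the dual side, where $T_A$ is the relation of $\Es(\frA)$: combining Theorem~\ref{th:RT-for-CCA} with Proposition~\ref{prop:monotonicity-for-T} gives, for any clopens $\varphi(a),\varphi(b)$ and any ultrafilter $u$, that $a\condi b\in u$ iff $T_A(u,\varphi(a))\subseteq\varphi(b)$. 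I would also freely use that in a Boolean space a closed set and a point outside it are separated by a clopen set, together with the fact (established just before the definition of $T$-closedness) that $\mathbf{C}(u,v)$ is closed under intersections of chains and hence, by Kuratowski--Zorn, has minimal members whenever nonempty.

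For $(2)\Rightarrow(1)$ I would treat the two coordinates separately. For (I), fix $u\in Y$ with $a\condi b\in u$ and take any $v$ with $T_A(u,\varphi(a),v)$; since $\varphi(a)$ is closed, $\mathbf{C}(u,v)\neq\emptyset$, so it has a minimal member, and $T$-closedness forces $v\in Y$. As $b\in v$ and $b\sim b'$, we get $b'\in v$; hence $T_A(u,\varphi(a))\subseteq\varphi(b')$, i.e. $a\condi b'\in u$, and symmetry gives (I). For (II), fix $u\in Y$ with $a\condi b\in u$ and take $v$ with $T_A(u,\varphi(a'),v)$. The key move is to pass to a witness below $\varphi(a')$: the family $\{Z\in\mathbf{C}(u,v):Z\subseteq\varphi(a')\}$ contains $\varphi(a')$ and is closed under chain intersections, so it has a minimal element $Z$, and any such $Z$ is in fact minimal in all of $\mathbf{C}(u,v)$ (a proper closed subset witnessing membership would already lie below $\varphi(a')$). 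Now $T$-closedness yields $Z\subseteq Y$, so $Z\subseteq\varphi(a')\cap Y=\varphi(a)\cap Y\subseteq\varphi(a)$; by upward closure of $T_A$ we get $T_A(u,\varphi(a),v)$, hence $b\in v$. Thus $a'\condi b\in u$, and symmetry gives (II).

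For $(1)\Rightarrow(2)$ I would argue by contraposition, splitting the $T$-closed condition into its two assertions. If some $x\in Y$ had a minimal $Z\in\mathbf{C}(x,y)$ with $y\notin Y$, I would separate $y$ from $Y$ by a clopen $\varphi(c)\supseteq Y$ with $y\notin\varphi(c)$; then $c\sim 1$, while \ref{T3}, applied to $T_A(x,Z,y)$, gives $y\in T_A(x,\varphi(1))$, so $1\condi c\notin x$ although $1\condi 1=1\in x$, contradicting compatibility at $x$. For the second assertion, suppose $Z\not\subseteq Y$ and pick $w\in Z\setminus Y$ and a clopen $\varphi(c)\supseteq Y$ with $w\notin\varphi(c)$. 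Minimality of $Z$ makes $Z\cap\varphi(c)$ fail to lie in $\mathbf{C}(x,y)$, so by \ref{T3} there is a clopen $U'\supseteq Z\cap\varphi(c)$ with $y\notin T_A(x,U')$. Taking $a'$ with $\varphi(a')=U'\cap\varphi(c)$ and $a$ with $\varphi(a)=\varphi(a')\cup\varphi(c)^{c}$ produces a pair with $\varphi(a)\cap Y=\varphi(a')\cap Y$ (so $a\sim a'$), with $Z\subseteq\varphi(a)$ whence $y\in T_A(x,\varphi(a))$, and with $y\notin T_A(x,\varphi(a'))$. Choosing a clopen $\varphi(b)\supseteq T_A(x,\varphi(a'))$ that avoids $y$ then gives $a'\condi b\in x$ but $a\condi b\notin x$, again contradicting compatibility at $x\in Y$.

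I expect the first-coordinate analysis to be the main obstacle in both directions. In $(2)\Rightarrow(1)$ the delicate point is that a minimal witness can be chosen below the prescribed clopen $\varphi(a')$ and is then automatically minimal in $\mathbf{C}(u,v)$, which is precisely what licenses invoking the ``$Z\subseteq Y$'' half of $T$-closedness; in $(1)\Rightarrow(2)$ the delicate point is the bookkeeping required to manufacture a pair $a,a'$ that agrees on $Y$ yet separates $y$ in the image of $T_A(x,-)$, for which \ref{T3} and the clopen separation of $w$ from $Y$ are essential. The second coordinate, by contrast, only uses that $T_A$ cannot carry a point of $Y$ outside $Y$, and should be comparatively routine.
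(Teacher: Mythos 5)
Your proof is correct --- both directions go through as written --- but its execution differs from the paper's in ways worth recording. In $(2)\Rightarrow(1)$ the paper does not split into the two one-variable compatibilities: from $a\condi c\in u$, $b\condi d\notin u$ (with $\fil_Y\subseteq u$) it extracts, via Lemma~\ref{lem:existence-for-representation}, a filter $\filG\ni b$ and an ultrafilter $v$ with $\D{u}{\filG}\subseteq v$ and $d\notin v$, then passes to a filter $\filH\supseteq\filG$ with $\varphi(\filH)$ minimal in $\mathbf{C}(u,v)$; your explicit observation that a minimal member of the restricted family $\{Z\in\mathbf{C}(u,v):Z\subseteq\varphi(\filG)\}$ is automatically minimal in all of $\mathbf{C}(u,v)$ is exactly what licenses that step, which the paper asserts without comment. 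The antecedent transfer is then done element-wise through an auxiliary lemma: $\klam{a,b}\in\theta(Y)$ yields $y\in\fil_Y$ with $a\wedge y=b\wedge y$, so $a\in\filH$ once $T$-closedness gives $\fil_Y\subseteq\filH$. Your set-level transfer $Z\subseteq\varphi(a')\cap Y=\varphi(a)\cap Y\subseteq\varphi(a)$ plus upward closure of $T_A$ on closed sets accomplishes the same thing without that lemma --- a small but genuine economy. In $(1)\Rightarrow(2)$ the paper argues directly rather than by contraposition, using the congruence positively: for $a\in\fil_Y$ (playing the role of your $c\sim 1$) it upgrades $(x\wedge a)\condi b\in u$ to $x\condi b\in u$, and later $x\condi(c\vee\neg a)\in u$ to $x\condi c\in u$, deriving contradictions with minimality and with $\D{u}{\filH}\subseteq v$; your clopen pair $\varphi(a')=U'\cap\varphi(c)$, $\varphi(a)=\varphi(a')\cup\varphi(c)^{c}$ is the same manipulation read backwards (note $\varphi(a)\cap\varphi(c)=\varphi(a')$, i.e.\ $a'$ is your $a\wedge c$ with $c\sim 1$). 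So both proofs rest on identical machinery --- Kuratowski--Zorn minimality in $\mathbf{C}(u,v)$, condition \ref{T3}, clopen separation, and the dual reading of $\condi$ through $T_A$ --- with yours trading the paper's filter-and-ideal computations for topological ones, gaining some transparency (the violating pairs are exhibited explicitly) at the cost of length.
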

\begin{proof}
Fix a closed subset $Y$ of $\Ul(A)$.

\smallskip

$(1)\Rarrow (2)$ Suppose that $(1)$ is satisfied. 

To prove that $Y$ is a $T$-closed set, assume that $u,v\in\Ul(A)$ are such such that $\fil_Y\subseteq u$, and let $\filH\in \Fi(A)$ such that $\varphi(\filH)$ is a minimal element in $\mathbf{C}(u,v)$.
Towards a contradiction, consider a scenario with an $a\in \fil_Y$ which is not an element of $\filH$, and take the filter $\filH_{a}$ generated by $\filH\cup\{a\}$. Since $\varphi(\filH_a)\subseteq \varphi(\filH)$, the minimality of $\filH$ entails that 
$T_{A}^c(u,\varphi(\filH_{a}),v)$, i.e, $\D{u}{\filH_{a}}\nsubseteq v$.
Pick $b\in\D{u}{\filH_a}$ such that $b\notin v$. By the former, there is $x\in \filH$ such that $(x\wedge a)\rightarrow b\in u$. By the definition of $\fil_Y$, we have that $Y\subseteq\varphi(a)$, thus $\klam{a,1}\in\theta(Y)$, and we get that $\klam{x\wedge a,x}\in\theta(Y)$. So, $\klam{(x\wedge a)\condi b,x\condi b}\in\theta(Y)$.
Thus, $\varphi((x\wedge a)\condi b)\cap Y=\varphi(x\condi b)\cap Y$.
As $\fil_Y\subseteq u$ and $(x\wedge a)\condi b\in u$, we
have that $x\condi b\in u$. Since $T_{A}(u,\varphi(\filH),v)$
and $x\in \filH$, we get $b\in v$, which is a contradiction. Thus, $\fil_Y\subseteq \filH$.

We prove now that $\fil_Y\subseteq v$. Suppose otherwise and pick an $a\in \fil_Y$ such that $a\notin v$. Consider the
ideal $\ide$ generated by $v^{c}\cup\{\neg a\}$.
If $\D{u}{\filH}\cap\ide=\emptyset$, then there exists $z\in\Ul(A)$ such that $\D{u}{\filH}\subseteq z$
and $v=z$ and $a\in z$, which is a contradiction. Thus, $\D{u}{\filH}\cap\ide\neq\emptyset$ and there are
$b\in\D{u}{\filH}\cap\ide$ and
$c\notin v$ such that $b\leq c\vee\neg a$. So, there is $x\in \filH$
such that $x\condi b\in u$, and consequently $x\condi b\leq x\condi(c\vee\neg a)\in u$.
As $\klam{a,1}\in\theta(Y)$, it follows that $\klam{\neg a,0}\in\theta(Y)$ and $\klam{c\vee\neg a,c}\in\theta(Y)$, and so $\klam{x\condi(c\vee\neg a),x\condi c}\in\theta(Y)$.
Since $\fil_Y\subseteq u$ and $\varphi((x\condi(c\vee\neg a))\cap Y=\varphi(x\condi c)\cap Y$,
we have $x\condi c\in u$. As $\D{u}{\filH}\subseteq v$  and $x\in\filH$, we get $c\in v$, which is a contradiction. Thus, $\fil_Y\subseteq v$.

\smallskip

$(2)\Rarrow (1)$ Suppose that $Y$ is a $T$-closed set. We will show that $\theta(Y)$ is a
congruence on $\klam{A,\condi}$. Let $a,b,c,d\in A$ be such that $\klam{a,b},\klam{c,d}\in\theta(Y)$. Suppose that $\klam{a\condi c,b\condi d}\notin\theta(Y)$,
i.e., $\varphi(a\condi c)\cap Y\neq\varphi(b\condi d)\cap Y$. Without the loss of generality, assume there exists an ultrafilter $u$ such that $a\condi c\in u$,
$\fil_Y\subseteq u$ and $b\rightarrowtriangle d\notin u$. By Lemma~\ref{lem:existence-for-representation}, there are a filter $\filG$ and an ultrafilter $v$ such that $\D{u}{\filG}\subseteq v$, $b\in \filG$ and $d\notin v$. Since $ \mathbf{C}(u,v)\neq \emptyset$, there exists a filter $\filH$ such $\filG\subseteq \filH$ and $\varphi(\filH)$ is minimal in $\mathbf{C}(u,v)$.
Then $b\in \filH$. As $\klam{a,b}\in\theta(Y)$, there exists $y\in \fil_Y$ such
that $a\wedge y=b\wedge y$. But, since $Y$ is a  $T$-closed set, $\fil_Y\subseteq \filH$, so we have that $y\in \filH$.
Thus, $b\wedge y=a\wedge y\in \filH$. Then $a\in \filH$. Since $T_{A}(u,\varphi(\filH),v)$,
$a\rightarrowtriangle c\in u$, and $a\in \filH$, we get $c\in v$. Then, since $v\in Y$ and by assumption we have that $\varphi(c)\cap Y=\varphi(d)\cap Y$, we get that $d\in v$, a contradiction.
\end{proof}
The following result is a consequence of Theorem \ref{cong and M-closed}.

\begin{theorem} Let $\frA\defeq\langle A,\condi\rangle\in\CA.$ There exists
a dual isomorphism between the lattice ${\rm {Con}}(\frA)$ of congruences
of $\frA$ and the lattice of $T$-closed sets of $\Es(\frA)$.
\end{theorem}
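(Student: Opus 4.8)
The plan is to obtain this statement as a direct corollary of Theorem~\ref{cong and M-closed} combined with the classical Stone-duality correspondence recalled at the opening of this section. Recall that the assignment $Y\mapsto\theta(Y)$ is already known to be a dual isomorphism between the lattice $\closed(\topos)$ of closed subsets of the Stone space of $A$, ordered by inclusion, and the lattice $\Cong(A)$ of congruences of the underlying Boolean algebra of $\frA$. Since every congruence of the conditional algebra $\frA$ is in particular a Boolean congruence, the lattice of conditional congruences sits inside $\Cong(A)$ as the subposet of those Boolean congruences that additionally respect $\condi$.

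First I would invoke Theorem~\ref{cong and M-closed}, which states precisely that, for a closed set $Y$, the Boolean congruence $\theta(Y)$ respects $\condi$---that is, it is a congruence of $\frA$---if and only if $Y$ is a $T$-closed set. Consequently the Stone-duality bijection $Y\mapsto\theta(Y)$ carries the collection of $T$-closed subsets of $\Es(\frA)$ onto the set of conditional congruences, while every closed subset that fails to be $T$-closed is sent to a Boolean congruence that is not a congruence of $\frA$. Restricting the Stone-duality bijection to the $T$-closed sets therefore yields a bijection onto the lattice of congruences of $\frA$.

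It then remains only to upgrade this bijection to a dual isomorphism of lattices. Here I would appeal to the general order-theoretic fact that an order anti-isomorphism between posets automatically carries all existing infima to suprema and vice versa. Since the congruence lattice of $\frA$ is a complete lattice, and the restricted map is an order-reversing bijection onto it, the family of $T$-closed sets inherits a lattice structure under inclusion and the restriction is a dual lattice isomorphism, as claimed.

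The only point requiring a little care---and it is bookkeeping rather than a genuine obstacle---is this last step: one must ensure that the lattice operations on the $T$-closed sets are the ones induced from $\closed(\topos)$ and that, under the correspondence, they match the operations of the conditional congruence lattice in the reversed sense. This is guaranteed by the order-theoretic characterisation of meets and joins, so no separate verification that the $T$-closed sets are closed under intersection or the relevant join is logically needed; it follows for free from the anti-isomorphism with the complete lattice of congruences of $\frA$.
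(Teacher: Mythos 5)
Your proposal is correct and is essentially the paper's own argument: the paper states this theorem as an immediate consequence of Theorem~\ref{cong and M-closed}, combined with the classical Stone-duality fact that $Y\mapsto\theta(Y)$ is a dual isomorphism between closed subsets of $\Ul(A)$ and Boolean congruences of $A$, exactly as you do (including the observation that every congruence of $\frA$ is in particular a Boolean congruence, hence of the form $\theta(Y)$ for a unique closed $Y$, which Theorem~\ref{cong and M-closed} then forces to be $T$-closed). Your closing remark is also sound, although it is worth noting that the meet of $T$-closed sets actually is plain intersection, since the defining condition of $T$-closedness is preserved under arbitrary intersections.
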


\section{Characterizations of some subvarieties of \texorpdfstring{$\CA$}{CA}}\label{sec:subvarieties}

So far, we have studied what can be seen as a very general algebraic interpretation of conditionals. Now, we aim to show that the techniques and results from the previous sections can be applied to various classes of algebras, for which we will provide a
relational characterization by means of the $T$ relation.

To begin, let us list the conditions we are going to consider in this section:
\begin{gather}
0\condi a=1\tag{C1$^\ast$}\label{C1-for-0}\\
(a\condi c)\wedge(b\condi c)\leq(a\vee b)\condi c\,,\tag{C3$^\ast$}\label{C3-reversed}\\
\tag{C4}\label{C4} a\condi b\leq c\condi(a\condi b)\,,\\
\tag{C5}\label{C5} a\wedge(a\condi b)\leq b\,,\\
\tag{C6}\label{C6} a\condi b\leq\neg b\condi\neg a\,,\\
\tag{C7}\label{C7} \neg(a\condi b)\leq c\condi\neg(a\condi b)\,,\\
\tag{C8}\label{C8} (1\condi (\neg a\lor b))\land (b\condi c)\leq a\condi c\,.
\end{gather}

Let us return to the subvarieties of $\CA$ mentioned in the introduction: 
\begin{enumerate}
\item pseudo-subordination algebras 
    \[
        \PSB\defeq\CA+\{\eqref{C1-for-0},\eqref{C3-reversed}\}\,,
    \]

    \item pseudo-contact algebras
    \begin{align*}
    \PsC\defeq{}&\PSB+\{ \eqref{C5},\eqref{C6}\}\\
    ={}&\CA+\{\eqref{C1-for-0},\eqref{C3-reversed},\eqref{C5},\eqref{C6}\}\,,
    \end{align*}
    \item strict-implication algebras
    \begin{align*}
    \SIA\defeq{}&\PSB+\{ \eqref{C4}, \eqref{C5},\eqref{C7}, \eqref{C8} \}\\
    ={}&\CA+\{\eqref{C1-for-0},\eqref{C3-reversed},\eqref{C4},\eqref{C5},\eqref{C7},\eqref{C8}\}\,,
    \end{align*}
    \item symmetric strict-implication algebras
    \begin{align*}
        \StwoIA\defeq{}&\PSB+\{ \eqref{C4}, \eqref{C5},\eqref{C6}, \eqref{C7}\}\\
        ={}&\CA+\{\eqref{C1-for-0},\eqref{C3-reversed},\eqref{C4},\eqref{C5},\eqref{C6},\eqref{C7}\}\,.
    \end{align*}
\end{enumerate}

\begin{figure}\centering
\begin{tikzpicture}
\node (n1) at (0,1) {$\CA$};
\node (n2) at (0,0) {$\PSB$};
\node (n3) at (-1.5,-1) {$\SIA$};
\node (n5) at (1.5,-1) {$\PsC$};
\node (n6) at (0,-2) {$\StwoIA$};

\draw  (n1) -- (n2);
\draw  (n2) -- (n3);
\draw  (n2) -- (n5);
\draw  (n3) -- (n6);
\draw  (n5) -- (n6);
\end{tikzpicture}\caption{The poset of subvarieties of the variety of conditional algebras analyzed in Section \ref{sec:subvarieties}.}
\end{figure}
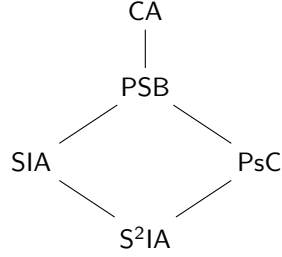

For the varieties $\SIA$ and $\StwoIA$ we use the equational axiomatizations introduced in \citep{Celani-et-al-AVOACRTSO}. In the paper, it has been proved that in the variety of pseudo-subordination algebras, $\eqref{C6}$ entails $\eqref{C8}$. In light of the fact that the reverse dependency does not hold, the former condition marks the difference between the two varieties.

Below, we prove that: 
\begin{enumerate}
    \item we may characterize the variety of pseudo-subordination algebras via properties of its dual spaces,
    \item each of the conditions \eqref{C4}--\eqref{C8} listed above has its first- or second-order correspondent expressing a~property of the $T_A$ relation,
    \item all the four subvarieties of $\CA$ are $\pi$-canonical. 
\end{enumerate}


\subsection{Dual spaces of pseudo-subordination algebras}

For a Boolean algebra $A$ let
\[
\closed_+(\tau_s)\defeq\closed(\topos)\setminus\{\emptyset\}\,.
\]
By definition, $T_A\subseteq\Ul(A)\times\closed(\topos)\times\Ul(A)$. For the condition \eqref{C1-for-0} we have the following
\begin{lemma}\label{lem:T-middle-non-empty}
 Let $\frA\defeq\klam{A,\condi}$ be a conditional algebra. Then, \eqref{C1-for-0} holds in $\frA$ iff $T_A\subseteq \Ul(A)\times \closed_+(\tau_s)\times \Ul(A)$.
\end{lemma}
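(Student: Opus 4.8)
The plan is to analyze exactly when a triple of $T_A$ can carry the empty set in its middle coordinate, and to show that this possibility is controlled by the single element $0\condi 0$. The first step I would carry out is the purely Boolean observation that, for $\fil\in\Fi(A)$, one has $\varphi(\fil)=\emptyset$ if and only if $\fil$ is the improper filter $A$: every proper filter extends to an ultrafilter, whereas $A\subseteq\ult$ is impossible for $\ult\in\Ul(A)$ (it would force $0\in\ult$). Consequently, by \eqref{df:T_A}, a triple with empty middle coordinate exists precisely when there are $\ult,\ultV\in\Ul(A)$ with $T_A(\ult,\emptyset,\ultV)$, and since the improper filter is the only witness $\fil$ satisfying $\varphi(\fil)=\emptyset$, this holds if and only if $\D{\ult}{A}\subseteq\ultV$. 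Hence the containment $T_A\subseteq\Ul(A)\times\closed_+(\topos)\times\Ul(A)$ is equivalent to the statement that, for every $\ult\in\Ul(A)$, the set $\D{\ult}{A}$ is contained in no ultrafilter.

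Next I would exploit that $\D{\ult}{A}$ is a filter, which is Lemma~\ref{lem:D-filter}(3) applied with the improper filter $A\in\Fi(A)$ placed in the antecedent slot. A filter is contained in no ultrafilter exactly when it is improper, i.e.\ when it contains $0$. Now $0\in\D{\ult}{A}$ means there is $a\in A$ with $a\condi 0\in\ult$; since $0\leq a$ yields $a\condi 0\leq 0\condi 0$ by \eqref{eq:cond-order-reversing}, this is in turn equivalent to $0\condi 0\in\ult$. Combining the two reductions, the frame condition $T_A\subseteq\Ul(A)\times\closed_+(\topos)\times\Ul(A)$ holds if and only if $0\condi 0\in\ult$ for every $\ult\in\Ul(A)$, that is, if and only if $0\condi 0=1$.

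It then remains to see that inside $\CA$ the equation $0\condi 0=1$ is equivalent to \eqref{C1-for-0}. One implication is immediate. For the converse, from $0\leq a$ and isotonicity in the second coordinate \eqref{eq:cond-order-preserving} we obtain $0\condi 0\leq 0\condi a$, so $0\condi 0=1$ forces $0\condi a=1$ for every $a$. Chaining the equivalences closes the argument. The step I would be most careful about is the first reduction: recognizing that the empty middle coordinate corresponds uniquely to the improper filter, so that forbidding it becomes a statement about $\D{\ult}{A}$ rather than about genuinely nonempty closed sets. Once that is in place, the collapse of \eqref{C1-for-0} to its single instance $0\condi 0=1$ via monotonicity is routine.
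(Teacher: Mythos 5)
Your proof is correct and follows essentially the same route as the paper's: both arguments hinge on the observation that the improper filter $A$ is the unique filter with $\varphi(\fil)=\emptyset$, so that a triple with empty middle coordinate exists precisely when $\D{\ult}{A}$ (a filter by Lemma~\ref{lem:D-filter}) is proper and hence extendable to an ultrafilter, with antitonicity \eqref{eq:cond-order-reversing} linking this to \eqref{C1-for-0}. Your extra reduction of \eqref{C1-for-0} to its single instance $0\condi 0=1$ and the packaging as a chain of equivalences are harmless reorganizations of the same machinery the paper deploys in its two directions.
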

\begin{proof}
    $(\Rarrow)$ Suppose that there exists $a\in A$ such that $0\condi a\neq 1$. So, there exists $\ult\in \Ul(A)$ such that $0\condi a\notin \ult$. In consequence, $a\notin D^\to_\ult(A)$, which means that $D^\to_\ult(A)$ is a proper filter and as such can be extended to $\ultV\in \Ul(A)$: $D^\to_\ult(A)\subseteq\ultV$. As $\varphi(A)=\emptyset$, by definition, we get that $ T_A(\ult, \emptyset,\ultV)$.

    \smallskip

    $(\Larrow)$ Let $\ult\in\Ul(A)$. If $0\condi a=1$ for all $a\in A$, then $D^\to_\ult(A)=A$ and therefore $T_A(\ult,\emptyset)=\varphi(D^\to_\ult(A))=\emptyset$.
\end{proof}

\begin{lemma}\label{lem:fundamental} Let
$\frA\defeq\klam{A,\condi}\in\CA+\eqref{C3-reversed}$. Let $\fil,\filH\in\Fi(A)$, $\fil\neq A$ and $\ult\in\Ul(A)$.
If $D_{\filH}^{\condi}(\fil)\subseteq \ult$, then there exists $\ultV\in\varphi(\fil)$
such that $D_{\filH}^{\condi}(\ultV)\subseteq \ult$.
\end{lemma}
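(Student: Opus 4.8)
The plan is to produce the required $\ultV$ by separating $\fil$ from a carefully chosen ideal of ``forbidden antecedents''. Concretely, I would set
\[
\ide\defeq\{a\in A:(\exists b\notin\ult)\,a\condi b\in\filH\}\,,
\]
the collection of those $a$ which, were they present in $\ultV$, would drag some element lying outside $\ult$ into $\D{\filH}{\ultV}$. The goal is then to find an ultrafilter $\ultV\supseteq\fil$ with $\ultV\cap\ide=\emptyset$. Once this is achieved, the conclusion is a one-liner: any $b\in\D{\filH}{\ultV}$ comes equipped with some $a\in\ultV$ satisfying $a\condi b\in\filH$, and if $b$ were outside $\ult$ then that same $a$ would lie in $\ide\cap\ultV$, a contradiction; hence $\D{\filH}{\ultV}\subseteq\ult$.

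First I would check that $\ide$ is a proper ideal. Downward closure is immediate from \eqref{eq:cond-order-reversing}: if $a\in\ide$ is witnessed by some $b\notin\ult$ and $a'\leq a$, then $a\condi b\leq a'\condi b\in\filH$, so $a'\in\ide$. Closure under finite joins is the crux of the argument, and the only place where \eqref{C3-reversed} is used. Given $a_1,a_2\in\ide$ witnessed by $b_1,b_2\notin\ult$, I would pass to $b\defeq b_1\vee b_2$, which remains outside $\ult$ because $\ult$ is prime; then \eqref{eq:cond-order-preserving} (isotonicity in the second coordinate) gives $a_1\condi b,a_2\condi b\in\filH$, and \eqref{C3-reversed} delivers
\[
(a_1\condi b)\wedge(a_2\condi b)\leq(a_1\vee a_2)\condi b\,,
\]
whence $(a_1\vee a_2)\condi b\in\filH$ and $a_1\vee a_2\in\ide$. (If $\ide$ happens to be empty the assertion is trivial, so one may assume it is a genuine ideal.)

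Next I would verify $\fil\cap\ide=\emptyset$, which is exactly where the hypothesis enters: were some $a$ in both sets, then $a\in\fil$ together with the witnessing $a\condi b\in\filH$ would place $b\in\D{\filH}{\fil}\subseteq\ult$, contradicting $b\notin\ult$. Since $\fil$ is proper (this is where $\fil\neq A$ is needed) and disjoint from the ideal $\ide$, the Boolean prime ideal theorem yields an ultrafilter $\ultV$ with $\fil\subseteq\ultV$ and $\ultV\cap\ide=\emptyset$; in particular $\ultV\in\varphi(\fil)$. The final inclusion $\D{\filH}{\ultV}\subseteq\ult$ is then the short argument recorded in the first paragraph.

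The difficulty is wholly concentrated in the join-closure of $\ide$. Without \eqref{C3-reversed} there is no mechanism forcing $(a_1\vee a_2)\condi b$ into $\filH$, $\ide$ need not be an ideal, and the separation step has nothing to separate against; every other step is a routine consequence of Lemma~\ref{lem:orden} together with the prime ideal theorem. This also explains why the lemma is stated over $\CA+\eqref{C3-reversed}$ rather than over $\CA$.
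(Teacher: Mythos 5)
Your proof is correct, and it takes a genuinely different route from the paper's. The paper runs the Kuratowski--Zorn lemma directly on the family $\bfF\defeq\{\filG\in\Fi(A):\D{\filH}{\filG}\subseteq\ult\text{ and }\fil\subseteq\filG\}$ and then shows that a maximal element $\fil^\ast$ is either all of $A$ (a degenerate case handled by monotonicity of $\D{\filH}{\cdot}$ and any ultrafilter over $\fil$) or a prime filter, which is then the desired $\ultV$; there \eqref{C3-reversed} enters in the primality argument, merging witnesses of failure for the filters generated by $\fil^\ast\cup\{a\}$ and $\fil^\ast\cup\{b\}$ into a contradiction with $\D{\filH}{\fil^\ast}\subseteq\ult$. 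You instead concentrate the entire difficulty in proving that your set $\ide$ of forbidden antecedents is an ideal---with \eqref{C3-reversed} used exactly once, for join-closure---and then delegate all choice-theoretic work to the standard filter--ideal separation theorem. The algebraic kernel is identical in both arguments: two witnesses $a_1\condi b_1,a_2\condi b_2\in\filH$ with $b_1,b_2\notin\ult$ are combined by passing to $b_1\vee b_2$ (still outside $\ult$, since $\ult$ is prime) and applying \eqref{eq:cond-order-preserving}, \eqref{eq:cond-order-reversing} and \eqref{C3-reversed}. What your decomposition buys is modularity: no explicit Zorn argument, no case split on whether the maximal filter is proper, and a transparent isolation of where \eqref{C3-reversed} is indispensable (note that your $\ide$ is precisely $\{a\in A:\D{\filH}{\upop a}\nsubseteq\ult\}$, i.e., the set of antecedents whose relative-necessity filter escapes $\ult$, so the lemma becomes: these bad antecedents form an ideal that $\fil$ avoids). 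What the paper's version buys is self-containedness, since it does not presuppose the separation theorem---though that theorem is itself proved by an argument of the same shape. Your bookkeeping of hypotheses is also accurate: $\fil\cap\ide=\emptyset$ is a direct restatement of $\D{\filH}{\fil}\subseteq\ult$, and $\fil\neq A$ is needed only to guarantee that some ultrafilter extends $\fil$, exactly the role it plays in the paper's degenerate case.
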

\begin{proof}
Let $D_{\filH}^{\condi}(\fil)\subseteq u$. Consider the family 
\[
\bfF\defeq\{ \filG\in\Fi(A):D_\filH^\condi(\filG)\subseteq \ult\text{ and }\fil\subseteq \filG\}\,. 
\]
The family is non-empty, as $\fil\in\bfF$, and---as it is routine to verify---for any non-empty chain $C\subseteq\bfF$, $\bigcup C\in\bfF$. Thus, by Kuratowski-Zorn Lemma $\bfF$ has a maximal element $\fil^\ast$. 

 If $\fil^\ast=A$ then we can consider an arbitrary ultrafilter $\ultV$ extending $\fil$. By monotonicity of $D_{\filH}^{\condi}$ we have $D^\condi_\filH(\ultV)\subseteq D^\condi_\filH(A)\subseteq\ult $.

Let then $\fil^\ast\subsetneq A$. We are going to show that $\fil^\ast$ is prime, and so an ultrafilter. To this end, we assume that $a\vee b\in\fil^\ast$. For the sake of contradiction, let $a\notin\fil^\ast$ and $b\notin\fil^\ast$. Let us consider the filters $\fil^\ast_{a}$ and $\fil^\ast_{b}$ generated by $\fil^\ast\cup\{ a\}$ and $\fil^\ast\cup\{b\}$, respectively. Since $\fil^\ast$ is a proper subset of both and is maximal in $\bfF$, neither of the two filters is in $\bfF$, which means that
\[
\D{\filH}{\fil^\ast_{a}}\nsubseteq \ult\quad\text{and}\quad \D{\filH}{\fil^\ast_{b}}\nsubseteq \ult\,.
\]
In consequence there exist $z_1\in\fil^\ast_{a}$, $z_2\in\fil^\ast_{b}$ and $x_1,x_2\notin \ult$ such
that
\[
z_1\condi x_{1},z_2\condi x_{2}\in \filH.
\]
By construction, there are $y_1,y_2\in \fil^\ast$ such that $y_1\wedge a\leq z_1$
and $y_2\wedge b\leq z_2$. Put $y\defeq y_1\wedge y_2$ and $x\defeq x_1\vee x_2\notin \ult$. By \eqref{eq:cond-order-reversing} both $y\wedge a\condi x$ and $y\wedge b\condi x$ are in $\filH$, and so by \eqref{C3-reversed}
\[
(y\wedge a\condi x)\wedge(y\wedge b\condi x)\leq y\wedge(a\vee b)\condi x\in \filH.
\]
Since $D^\condi_\filH(\fil^\ast)\subseteq \ult$ and $y\wedge(a\vee b)\in\fil^\ast$ it follows that $x\in\ult$, a~contradiction.
\end{proof}
\begin{lemma}\label{cond C3*}
    Let $\frA\defeq\klam{A,\condi}\in\CA$. If the relation $T_A$ of its dual space $\Es(\frA)$ satisfies
    \[(\forall \ult,\ultV\in \Ul(A))(\forall Y\neq \emptyset)\,(T_A(\ult, Y,\ultV)\Rarrow(\exists \ultW\in Y)\,T_A(\ult,\{\ultW\},\ultV)\] then $\frA$ satisfies \eqref{C3-reversed}.
\end{lemma}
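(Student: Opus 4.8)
The plan is to establish \eqref{C3-reversed}, i.e.\ $(a\condi c)\wedge(b\condi c)\leq(a\vee b)\condi c$, by contraposition at the level of ultrafilters: I assume it fails for some $a,b,c\in A$ and derive a contradiction from the stated first-order property of $T_A$. So suppose $(a\condi c)\wedge(b\condi c)\nleq(a\vee b)\condi c$; then there is an ultrafilter $\ult\in\Ul(A)$ with $(a\condi c)\wedge(b\condi c)\in\ult$ and $(a\vee b)\condi c\notin\ult$, so in particular $a\condi c\in\ult$ and $b\condi c\in\ult$. Note also that $a\vee b\neq 0$, since otherwise $a=b=0$ and the two sides of \eqref{C3-reversed} coincide, contradicting the assumed failure.

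Next I would convert $(a\vee b)\condi c\notin\ult$ into a hybrid triple. By Lemma~\ref{lem:existence-for-representation} (whose proof shows the witnessing filter may be taken to be the principal filter $\upop(a\vee b)$), from $(a\vee b)\condi c\notin\ult$ we obtain $c\notin\D{\ult}{\upop(a\vee b)}$. Since $\D{\ult}{\upop(a\vee b)}$ is a filter by Lemma~\ref{lem:D-filter} and misses $c$, it extends to an ultrafilter $\ultV$ with $\D{\ult}{\upop(a\vee b)}\subseteq\ultV$ and $c\notin\ultV$. Writing $Y\defeq\varphi(\upop(a\vee b))=\varphi(a\vee b)$, which is closed and, because $a\vee b\neq 0$, nonempty, this says exactly $T_A(\ult,Y,\ultV)$ by~\eqref{df:T_A}. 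I then feed $T_A(\ult,Y,\ultV)$ into the hypothesis: as $Y\neq\emptyset$, there is $\ultW\in Y$ with $T_A(\ult,\{\ultW\},\ultV)$.

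It remains to exploit that $\ultW$ is a single ultrafilter lying in $Y$. Since $\varphi(\ultW)=\{\ultW\}$, applying \eqref{eq:remark-T} to $T_A(\ult,\{\ultW\},\ultV)$ gives $\D{\ult}{\ultW}\subseteq\ultV$. On the other hand $\ultW\in Y=\varphi(a\vee b)$ means $a\vee b\in\ultW$, and as $\ultW$ is prime either $a\in\ultW$ or $b\in\ultW$. In the first case, $a\in\ultW$ together with $a\condi c\in\ult$ yields $c\in\D{\ult}{\ultW}\subseteq\ultV$; in the second case $b\in\ultW$ and $b\condi c\in\ult$ give the same conclusion. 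Either way $c\in\ultV$, contradicting $c\notin\ultV$, and \eqref{C3-reversed} follows.

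The step I expect to be most delicate is the passage from the point-witness $\ultW$ back to the filter-level statement $\D{\ult}{\ultW}\subseteq\ultV$: one must be sure that the singleton $\{\ultW\}$ in the middle coordinate of $T_A$ is genuinely governed by the whole ultrafilter $\ultW$, and not merely by some smaller filter with the same closure. This is exactly where \eqref{eq:remark-T} does the work, since it records that $\D{\ult}{\fil}$ depends only on $\varphi(\fil)$ (a consequence of \eqref{eq:cond-order-reversing} together with compactness of the Stone space). The only other point needing care is ensuring $Y\neq\emptyset$ so that the hypothesis is applicable, which is handled by the observation that $a\vee b\neq 0$ in the nontrivial case.
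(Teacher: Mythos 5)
Your proof is correct and follows essentially the same route as the paper's: contraposition to an ultrafilter $\ult$ with $(a\condi c)\wedge(b\condi c)\in\ult$ and $(a\vee b)\condi c\notin\ult$, Lemma~\ref{lem:existence-for-representation} to produce $\ultV$ with $T_A(\ult,\varphi(a\vee b),\ultV)$ and $c\notin\ultV$, the hypothesis (applicable since $a\vee b\neq 0$) to get a point-witness $\ultW\in\varphi(a\vee b)$ with $\D{\ult}{\ultW}\subseteq\ultV$, and primeness of $\ultW$ to conclude $c\in\D{\ult}{\ultW}\subseteq\ultV$, a contradiction. The only difference is that you spell out details the paper leaves implicit (why $a\vee b\neq 0$, the principal-filter witness $\upop(a\vee b)$, and the reduction of $T_A(\ult,\{\ultW\},\ultV)$ to $\D{\ult}{\ultW}\subseteq\ultV$ via \eqref{eq:remark-T}).
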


\begin{proof}
Suppose that there exist $a,b,c\in A$ such that $(a\condi c)\wedge (b\condi c)\nleq (a\vee b\condi c)$. Then, there exists an ultrafilter $\ult$ such that $(a\condi c)\wedge (b\condi c)\in \ult$ but $(a\vee b\condi c)\notin \ult$. Thus there exists $\ultV\in \Ul(A)$ such that $T(\ult,\varphi(a\vee b),\ultV)$ but $c\notin \ultV$. Note that $a\vee b\neq 0$, so $\varphi(a\vee b)\neq \emptyset$. By assumption, there exists $\ultW\in\varphi(a\vee b)$ such that $\D{\ult}{\ultW}\subseteq \ultV$. But $c\in\D{\ult}{\ultW}$ which is a contradiction.    
\end{proof}

From lemmas \ref{lem:T-middle-non-empty}, \ref{lem:fundamental} and \ref{cond C3*} we get the dual spaces of pseudo-subordination algebras.

\begin{theorem}
    Let $\frA\defeq\klam{A,\condi}\in \CA$. $\frA$ is a pseudo-subordination algebra iff the relation $T_A$ of its dual space $\Es(\frA)$ satisfies\/\textup{:}
 if $T_A(\ult, Y, \ultV)$, then there exists $\ultW\in Y$ such that $T_A(\ult,\{\ultW\},\ultV)$.
\end{theorem}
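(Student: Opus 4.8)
The plan is to observe that, by definition, $\frA\in\PSB$ exactly when it satisfies the two axioms \eqref{C1-for-0} and \eqref{C3-reversed}, and to match each of these with one half of the displayed property of $T_A$. The crucial point is that the clause ``there exists $\ultW\in Y$'' silently encodes non-emptiness of the middle coordinate: whenever $T_A(\ult,Y,\ultV)$ holds, the conclusion can only be met if $Y\neq\emptyset$. Thus the stated property bundles together (i) the requirement that $T_A$ never relate a point to the empty set, and (ii) a splitting property for non-empty middle coordinates, and these correspond respectively to \eqref{C1-for-0} and \eqref{C3-reversed}.

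For the implication from $\PSB$ to the property, suppose $T_A(\ult,Y,\ultV)$. By \eqref{df:T_A} there is a filter $\fil$ with $Y=\varphi(\fil)$ and $\D{\ult}{\fil}\subseteq\ultV$. Since $\frA$ satisfies \eqref{C1-for-0}, Lemma~\ref{lem:T-middle-non-empty} forces $Y\neq\emptyset$, hence $\fil$ is proper and $\fil\neq A$. As $\frA$ also satisfies \eqref{C3-reversed}, I may apply Lemma~\ref{lem:fundamental}, with the filter to be refined taken as $\fil$ (so that $\varphi(\fil)=Y$), its subscript as the point $\ult$, and the target ultrafilter as $\ultV$; this is exactly the hypothesis $\D{\ult}{\fil}\subseteq\ultV$ of that lemma. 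It returns an ultrafilter $\ultW\in\varphi(\fil)=Y$ with $\D{\ult}{\ultW}\subseteq\ultV$. Because $\ultW$ is an ultrafilter, $\varphi(\ultW)=\{\ultW\}$, so $\D{\ult}{\ultW}\subseteq\ultV$ is precisely $T_A(\ult,\{\ultW\},\ultV)$ by \eqref{df:T_A}, giving the required witness.

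For the converse, I would extract the two axioms separately. Applying the property with $Y=\emptyset$, the conclusion ``$\exists\ultW\in\emptyset$'' is unsatisfiable, so the antecedent must fail; hence $T_A(\ult,\emptyset,\ultV)$ holds for no $\ult,\ultV$, i.e.\ $T_A\subseteq\Ul(A)\times\closed_+(\topos)\times\Ul(A)$, and Lemma~\ref{lem:T-middle-non-empty} then yields \eqref{C1-for-0}. Restricted to non-empty $Y$, the property is verbatim the hypothesis of Lemma~\ref{cond C3*}, whose conclusion is \eqref{C3-reversed}. Having both \eqref{C1-for-0} and \eqref{C3-reversed}, we conclude $\frA\in\PSB$.

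There is no deep obstacle here, since the substantive work is already contained in Lemmas~\ref{lem:fundamental} and~\ref{cond C3*}; the theorem merely assembles them. The only step needing care is the bookkeeping in the forward direction: one must line up the three filters in Lemma~\ref{lem:fundamental} correctly and, in particular, notice that the filter it outputs is automatically an ultrafilter, so that its Stone image is the singleton $\{\ultW\}$ — this is what converts the containment $\D{\ult}{\ultW}\subseteq\ultV$ into the singleton instance $T_A(\ult,\{\ultW\},\ultV)$ demanded by the statement.
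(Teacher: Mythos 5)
Your proof is correct and follows essentially the same route as the paper: the forward direction combines Lemma~\ref{lem:T-middle-non-empty} (to get properness of the filter from \eqref{C1-for-0}) with Lemma~\ref{lem:fundamental} (which needs \eqref{C3-reversed}) to produce the witnessing ultrafilter, and the converse extracts \eqref{C1-for-0} and \eqref{C3-reversed} via Lemmas~\ref{lem:T-middle-non-empty} and~\ref{cond C3*} exactly as the paper does. Your write-up is in fact more explicit than the paper's about the bookkeeping (properness of $\fil$, and that $\varphi(\ultW)=\{\ultW\}$ turns $\D{\ult}{\ultW}\subseteq\ultV$ into the singleton instance of $T_A$), which the paper leaves implicit.
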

\begin{proof}
($\Rarrow$) Assume $\frA$ is a pseudo-subordination algebra. If $T_A(\ult, Y, \ultV)$, then, by equation \eqref{C1-for-0}, for some proper filter $\fil$, $\D{\ult}{\fil}\subseteq\ultV$. Thus, there is an ultrafilter $\ultW$ such that $\fil\subseteq w$ and $\D{\ult}{\ultW}\subseteq\ultV$, i.e., $T_A(\ult,\{\ultW\},\ultV)$.

\smallskip

($\Larrow$) Suppose that $T_A$ satisfies the condition.  By Lemma \ref{lem:T-middle-non-empty}, we get that $\frA$ meets the equation \eqref{C1-for-0}, and by Lemma \ref{cond C3*}, we get that $A$ meets \eqref{C3-reversed}. Thus, $\frA$ is a pseudo-subordination algebra.
\end{proof}

The previous theorem guarantees that for a pseudo-subordination algebra $\frA \defeq \klam{A, \condi}$, the relation $T_A$ is actually induced by a ternary relation $S$ on $\Es(\frA)$ defined by
\[
S(x,y,z) \iffdef T_{A}(x, \{y\}, z).
\]
Conversely, $T_A$ can be recovered as 
\[
T_A(x,K,z) \text{ if and only if }K\in \closed(\tau_s) \text{ and there is } y \in K \text{ such that } S(x,y,z).
\]

\subsection{Correspondences} In this section we prove correspondences between the various axioms for $\condi$ presented in the previous subsection and first- and second-order properties of expanded Stone spaces of conditional algebras. To this end, we need the lemma below.

\begin{lemma}\label{lem:fundamental2} Let
$\frA\defeq\klam{A,\condi}\in\CA$. Let $\fil,\filH\in\Fi(A)$ and $\ultV\in\Ul(A)$.
If $\D{\filH}{\fil}\subseteq\ultV$, then there exists $\ult\in\varphi(\filH)$
such that $\D{\ult}{\fil}\subseteq\ultV$.
\end{lemma}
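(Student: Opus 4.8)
The plan is to realise the desired ultrafilter $\ult$ as one that extends $\filH$ while avoiding a suitable ideal. Set
\[
S\defeq\{a\condi b : a\in\fil\tand b\notin\ultV\}\,,
\]
and let $J$ be the ideal of $A$ generated by $S$. The target containment $\D{\ult}{\fil}\subseteq\ultV$ is, by taking contrapositives over the witnessing antecedents, exactly the statement that $\ult$ contains no element of $S$; so it will suffice to produce an ultrafilter $\ult\supseteq\filH$ disjoint from $J$.

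First I would show that $J$ is nothing but the downward closure $\downop S$, i.e.\ that $S$ is updirected. Given $a_1\condi b_1,a_2\condi b_2\in S$, monotonicity of the conditional (Lemma~\ref{lem:orden}, using \eqref{eq:cond-order-reversing} in the first coordinate and \eqref{eq:cond-order-preserving} in the second) gives
\[
a_i\condi b_i\leq (a_1\wedge a_2)\condi(b_1\vee b_2)\qquad(i=1,2)\,,
\]
so $(a_1\condi b_1)\vee(a_2\condi b_2)\leq(a_1\wedge a_2)\condi(b_1\vee b_2)$. Since $\fil$ is a filter, $a_1\wedge a_2\in\fil$, and since $\ultV$ is an ultrafilter, hence prime, $b_1,b_2\notin\ultV$ forces $b_1\vee b_2\notin\ultV$; thus the right-hand side again lies in $S$. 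Hence every finite join of elements of $S$ is dominated by a single element of $S$, and consequently $J=\downop S$.

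Next I would verify that $\filH\cap J=\emptyset$. If some $h\in\filH$ satisfied $h\leq a\condi b$ with $a\in\fil$ and $b\notin\ultV$, then $a\condi b\in\filH$ by upward closure of $\filH$, whence $b\in\D{\filH}{\fil}$; but the hypothesis $\D{\filH}{\fil}\subseteq\ultV$ would then give $b\in\ultV$, a contradiction. (In particular $0\in J$, so this very disjointness also records that $\filH$ is proper, which is what licenses the separation step below.) Finally, by the Boolean prime-filter theorem there is an ultrafilter $\ult$ with $\filH\subseteq\ult$ and $\ult\cap J=\emptyset$, hence $\ult\cap S=\emptyset$. Then $\ult\in\varphi(\filH)$, and if $b\in\D{\ult}{\fil}$, say $a\condi b\in\ult$ with $a\in\fil$, then $b\notin\ultV$ would place $a\condi b$ in $S\cap\ult$, which is impossible; therefore $\D{\ult}{\fil}\subseteq\ultV$, as required.

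I expect the one genuinely load-bearing step to be the updirectedness of $S$: this is where the order-theoretic behaviour of $\condi$ is essential, namely that it is antitone in the antecedent and isotone in the consequent, together with primeness of $\ultV$. It is precisely this that lets $S$ generate an ideal equal to $\downop S$ without invoking any axiom beyond those of $\CA$ (in contrast with Lemma~\ref{lem:fundamental}, which refines the first coordinate and needs \eqref{C3-reversed}). The remaining steps are routine filter/ideal separation.
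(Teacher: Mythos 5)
Your proof is correct, and it reaches the conclusion by a different route than the paper, although the load-bearing computation is the same. The paper applies the Kuratowski--Zorn lemma directly to the family $\bfF\defeq\{\scrG\in\Fi(A):\D{\scrG}{\fil}\subseteq\ultV\tand\filH\subseteq\scrG\}$, takes a maximal element $\filH^\ast$, and shows it is prime: if $a\vee b\in\filH^\ast$ with $a,b\notin\filH^\ast$, the filters generated by $\filH^\ast\cup\{a\}$ and $\filH^\ast\cup\{b\}$ fall outside $\bfF$, and the two witnesses $x_1\condi y_1$ and $x_2\condi y_2$ thus obtained merge into $(x_1\wedge x_2)\condi(y_1\vee y_2)$, which lands in $\filH^\ast$ and contradicts $\D{\filH^\ast}{\fil}\subseteq\ultV$. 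That merging step is precisely your updirectedness argument for $S$: both rest on \eqref{eq:cond-order-reversing} and \eqref{eq:cond-order-preserving}, closure of $\fil$ under meets, and primeness of $\ultV$. The difference is that you package this once and for all into the statement that $S$ generates the ideal $\downop S$, observe that the hypothesis of the lemma is literally the disjointness $\filH\cap\downop S=\emptyset$, and then delegate the existence of $\ult$ to the standard filter--ideal separation theorem, whereas the paper in effect reproves that separation principle in situ for this particular family. Your version is more modular and makes it transparent that nothing beyond the $\CA$ monotonicity facts is used --- and your closing contrast with Lemma~\ref{lem:fundamental}, where the refinement happens on the antecedent-side filter and \eqref{C3-reversed} is genuinely needed, is exactly the right diagnosis; the paper's version is self-contained (no appeal to the separation theorem) and structurally parallel to its proof of Lemma~\ref{lem:fundamental}. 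One pedantic point: the identity $J=\downop S$ presupposes $S\neq\emptyset$ (for $S=\emptyset$ the generated ideal is $\{0\}$, not $\downop S=\emptyset$), but this always holds here, since $1\in\fil$ and $0\notin\ultV$ give $1\condi 0\in S$.
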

\begin{proof}
  Assume that $\D{\filH}{\fil}\subseteq\ultV$ and consider the family 
  \[
  \mathbf{F}\defeq\{\scrG\in\Fi(A):\D{\scrG}{\fil}\subseteq\ultV\text{ and }\filH\subseteq\scrG\}\,.
  \]
As $\filH\in\bfF$, and $\bfF$ is closed under unions of chains, by the Kuratowski-Zorn Lemma in $\bfF$ there exists a maximal filter $\filH^\ast$ extending $\filH$. Observe that $\filH^\ast$ is proper, as $\D{A}{\fil}=A\nsubseteq \ultV$. To show $\filH^\ast$ is prime, assume that $a\vee b\in \filH^\ast$ but $a\notin \filH^\ast$ and $b\notin \filH^\ast$.
Let us consider the filters $\filH^\ast_{a}$ and $\filH^\ast_{b}$ generated by $\filH^\ast\cup\{a\} $ 
and $\filH^\ast\cup\{b\}$, respectively. Neither of them is in $\bfF$ as they both extend  $\filH^\ast$ which is maximal in the set. In consequence $\D{\filH^\ast_{a}}{\fil}\nsubseteq \ultV$ and $\D{\filH^\ast_{b}}{\fil}\nsubseteq \ultV$, so
there are $x_1\in \fil$, $x_2\in \fil$ and $y_{1},y_{2}\notin \ultV$ such
that
\[
x_{1}\condi y_{1}\in \filH^\ast_a\quad\text{and}\quad x_{2}\condi y_{2}\in \filH^\ast_b.
\]
Take $y\defeq y_{1}\vee y_{2}\notin \ultV$ and $x\defeq x_1\wedge x_2\in \fil$.
By Lemma \ref{lem:orden}, $x\condi y\in \filH^\ast_a\cap \filH^\ast_b$. So, by the definitions of $\filH^\ast_a$ and $\filH^\ast_b$ there are $c_{1},c_{2}\in \filH^\ast$ such that $c_{1}\wedge a\leq x\condi y$
and $c_{2}\wedge b\leq x\condi y$. Let $c\defeq c_{1}\wedge c_{2}$.
Then $c\wedge (a\vee b)\in \filH^\ast$ and it is easy to see that
\[
c\wedge(a\vee b)\leq x \condi y\quad\text{and thus}\quad x\condi y\in \filH^\ast.
\]
Since $\D{\filH^\ast}{\fil}\subseteq \ultV$ and $x\in \fil$, we get $y\in \ultV$,
which is a contradiction. Thus, $\filH^\ast$ is prime, as required.
\end{proof}

Let us say that a property $\Phi$ of conditional algebras corresponds
to a first- or higher-order condition $\Psi$ expressible by a sentence in $\mathcal{L}_{T}$ if for every conditional
algebra $\frA\defeq\klam{A,\condi}$, $\frA$ has $\Phi$ (in symbols: $\frA\models\Phi$) if and
only if the expanded Stone space $\Es(\frA)=\langle\Ul(A),\topos,T_A\rangle$
has $\Psi$ (in symbols: $\Es(\frA)\models\Psi$). Of course, while speaking about a condition $\varphi$ for $\frA\in\CA$, we think about any equality involving $\condi$. All the conditions we presented so far are open, yet when we assume that $\frA$ satisfies, e.g., $a\condi a=1$ we mean that the closure of the formula holds in $\frA$. As all our constraints for $\condi$ are universal we will write---for example---$\frA\models a\condi a=1$ instead of $\frA\models (\forall a)\,a\condi a=1$ to save space and avoid notation cluttering. This will not lead to any ambiguity.

On the side of expanded spaces, we are interested in those properties of theirs that are expressible by means of $T_A$, and these, in some cases, are going to be quite complex. Again, for clarity reasons, we adopt the following conventions:
 \begin{enumerate}[label=(\arabic*),itemsep=0pt]
    \item the expression `$T\ult Y\ultV$' abbreviates `$T_A(\ult,Y,\ultW)$' (this, of course, applies to other first- or second-order letters),
    \item `$\forall \ult$' is to be interpreted as $\forall \ult\in \Ul(A)$, and `$\forall Y$', as $\forall Y\in\closed(\Ul(A))$ (also with the possibility to use different letters). 
\end{enumerate}

\begin{theorem}\label{th:correspondences-for-CAs} If $
\frA\defeq\klam{A,\condi}\in\CA$, then the following correspondences hold between formulas valid in $\frA$ and first- and second-order properties of $\Es(\frA)$\/\textup{:}
{\small
\begin{align}
\frA\models a\condi b\leq c\condi(a\condi b)\quad&\text{iff}\quad\Es(\frA)\models\forall \ult\ultV\ultW\forall Y\!Z(T\ult Y\ultV\,\&\,T\ultV Z\ultW\Rarrow T\ult Z\ultW)\,,\tag{A4}\label{eq:crrspndc-5}\\
\frA\models a\wedge(a\condi b)\leq b\quad&\text{iff}\quad\Es(\frA)\models\forall \ult T\ult\{\ult\}\ult\,,\tag{A5}\label{eq:crrspndc-6}\\
\frA\models a\condi b\leq\neg b\condi\neg a\quad&\text{iff}\quad\Es(\frA)\models\forall \ult\ultV\forall Y(T\ult Y\ultV  \Rarrow(\exists \ultW\in Y)\,T\ult\{\ultV\} \ultW)\,,\tag{A6}\label{eq:crrspndc-7}\\
\begin{split}
\frA\models\neg(a\condi b)\leq c\condi\neg(a\condi b)\quad&\text{iff}\quad 
\Es(\frA)\models\forall \ult\ultV\ultW\forall Y\!Z (T\ult Y\ultV\,\&\,T\ult Z\ultW\Rarrow T\ultV Z\ultW)\,,\label{eq:crrspndc-8}
\end{split}\tag{A7}\\[1em]
\begin{split}
\frA\models(1\condi (\neg a\lor b))\land (b\condi c)\leq a&{}\condi c\quad\text{iff}\\\Es(\frA)&\models\forall \ult\ultV\forall YZ(T\ult Y\ultV\,\&T(\ult,\Ul(A))\cap Y\subseteq Z \Rarrow T\ult Z\ultV)\,.\label{eq:crrspndc-11}
\end{split}\tag{A8}
\end{align}%
}
\end{theorem}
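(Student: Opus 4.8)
The plan is to establish each of the five biconditionals \eqref{eq:crrspndc-5}--\eqref{eq:crrspndc-11} on its own, in every case splitting into the two implications and running the ``frame $\Rarrow$ algebra'' direction by contraposition. The uniform engine is Lemma~\ref{lem:existence-for-representation}: an inequality $s\leq t$ fails in $\frA$ precisely when some $\ult\in\Ul(A)$ contains $s$ but not $t$, and whenever a term of shape $p\condi q$ is missing from an ultrafilter, the lemma returns a filter $\fil$ with $p\in\fil$, an ultrafilter $\ultV\supseteq\D{\ult}{\fil}$ and $q\notin\ultV$ --- exactly a witnessing triple of $T_A$, since $\D{\ult}{\fil}\subseteq\ultV$ is the content of $T_A(\ult,\varphi(\fil),\ultV)$. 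So for each ``$\Larrow$'' I would negate the equation, peel the outer $\condi$ and then (where present) the inner $\condi$ by two applications of Lemma~\ref{lem:existence-for-representation}, feed the resulting $T_A$-facts into the frame property, and read off a contradiction.

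For the ``algebra $\Rarrow$ frame'' directions I would unfold $T_A$ --- each closed middle coordinate is some $\varphi(\fil)$ encoding $\D{\ult}{\fil}\subseteq\ultV$ --- and transport elements across the inclusions using Lemma~\ref{lem:orden} together with the hypothesised inequality. For \eqref{eq:crrspndc-5} (axiom~\eqref{C4}): given $T_A(\ult,\varphi(\fil_1),\ultV)$, $T_A(\ultV,\varphi(\fil_2),\ultW)$ and $y\in\D{\ult}{\fil_2}$ witnessed by $c\in\fil_2$, the hypothesis $c\condi y\leq d\condi(c\condi y)$ (any $d\in\fil_1$) puts $c\condi y$ into $\D{\ult}{\fil_1}\subseteq\ultV$, whence $y\in\D{\ultV}{\fil_2}\subseteq\ultW$, giving $T_A(\ult,\varphi(\fil_2),\ultW)$. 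The same ``promote a product into the next $D$-set via the inequality'' move handles \eqref{eq:crrspndc-6} (axiom~\eqref{C5}, where $\{\ult\}=\varphi(\ult)$ so the frame clause is just $\D{\ult}{\ult}\subseteq\ult$) and \eqref{eq:crrspndc-8} (axiom~\eqref{C7}): from $T_A(\ult,\varphi(\fil_1),\ultV)$, $T_A(\ult,\varphi(\fil_2),\ultW)$ and $y\in\D{\ultV}{\fil_2}$ with witness $c$, if $y\notin\ultW\supseteq\D{\ult}{\fil_2}$ then $\neg(c\condi y)\in\ult$, which \eqref{C7} promotes into $\D{\ult}{\fil_1}\subseteq\ultV$, contradicting $c\condi y\in\ultV$.

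The correspondence \eqref{eq:crrspndc-7} (axiom~\eqref{C6}) is the one needing a separate properness argument for its existential conclusion. To produce the required $\ultW\in\varphi(\fil)$ with $\D{\ult}{\ultV}\subseteq\ultW$, I would show the filter generated by $\fil\cup\D{\ult}{\ultV}$ (a filter by Lemma~\ref{lem:D-filter}) is proper and extend it; if it were improper there would be $c\in\fil$ and $d\in\D{\ult}{\ultV}$ with $d\leq\neg c$, so $\neg c\in\D{\ult}{\ultV}$, i.e.\ $x\condi\neg c\in\ult$ for some $x\in\ultV$, and the contraposition form $x\condi\neg c\leq c\condi\neg x$ of \eqref{C6} drives $\neg x$ into $\D{\ult}{\fil}\subseteq\ultV$ against $x\in\ultV$.

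The hard part will be \eqref{eq:crrspndc-11} (axiom~\eqref{C8}), where the antecedent mixes the ``global'' set $T(\ult,\Ul(A))$ --- equal to $\varphi(\D{\ult}{\{\one\}})$ by \eqref{eq:remark-T} since $\Ul(A)=\varphi(\{\one\})$ --- with the middle coordinate $Y$. For ``$\Larrow$'' I would negate, obtain $\ult$ with $\one\condi(\neg a\lor b),\,b\condi c\in\ult$ and $a\condi c\notin\ult$, extract $T_A(\ult,\varphi(\fil),\ultV)$ with $a\in\fil$ and $c\notin\ultV$, then take $Z\defeq\varphi(b)$: the inclusion $T(\ult,\Ul(A))\cap\varphi(\fil)\subseteq\varphi(b)$ holds because $a\in\fil$ and $\neg a\lor b\in\D{\ult}{\{\one\}}$ force $b$ into the generated filter, so the frame property yields $T_A(\ult,\varphi(b),\ultV)$ and hence $c\in\ultV$, a contradiction. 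For ``$\Rarrow$'' the delicate point is the substitution: writing $\varphi(\D{\ult}{\{\one\}})\cap\varphi(\fil)=\varphi(\scrG)\subseteq\varphi(\filG)$ forces $\filG\subseteq\scrG$, so a witness $b\in\filG$ for $c\in\D{\ult}{\filG}$ satisfies $d\land e\leq b$ with $d\in\fil$ and $\one\condi e\in\ult$ (compressing finitely many generators into one $e$ using that $\one\condi(-)$ preserves finite meets by \eqref{C2}). I would then apply \eqref{C8} with $a\mapsto d$, $b\mapsto d\land e$, $c\mapsto c$: since $\neg d\lor(d\land e)=\neg d\lor e\geq e$ gives $\one\condi(\neg d\lor(d\land e))\geq\one\condi e\in\ult$, and $(d\land e)\condi c\geq b\condi c\in\ult$ by \eqref{eq:cond-order-reversing}, the inequality delivers $d\condi c\in\ult$, whence $c\in\D{\ult}{\fil}\subseteq\ultV$. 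Spotting this substitution --- and recognising that $\one\condi(-)$ must be used as a meet-preserving ``global box'' --- is the main obstacle; the remaining steps are routine manipulation of the defining inclusions.
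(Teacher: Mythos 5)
Your proposal is correct and follows essentially the same route as the paper's proof: the ``$\Larrow$'' directions by contraposition through Lemma~\ref{lem:existence-for-representation}, the ``$\Rarrow$'' directions by unfolding $T_A$ and promoting terms with the $1\condi(-)$ ``global box'' (for \eqref{eq:crrspndc-5} and \eqref{eq:crrspndc-8}), the filter--ideal separation via the contraposed \eqref{C6} (for \eqref{eq:crrspndc-7}), and for \eqref{eq:crrspndc-11} the same generated filter $\D{\ult}{\{\one\}}\cup\fil$ with a compressed witness $d\wedge e\leq b$ feeding an instance of \eqref{C8}. The only deviations are cosmetic: the paper instantiates \eqref{C8} at $\klam{a,b,c}$ using $d\leq\neg a\vee b$ where you instantiate at $\klam{d,d\wedge e,c}$, and it phrases your properness argument as disjointness of $\fil$ from the ideal $\{b:\neg b\in\D{\ult}{\ultV}\}$ --- both equivalent.
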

\begin{proof}
\eqref{eq:crrspndc-5} ($\Rarrow$) Consider $\ult,\ultV,\ultW\in \Ul(A)$ and $\fil,\filH\in\Fi(A)$
such that $T_A(\ult,\varphi(\fil),\ultV)$ and $T_A(\ultV,\varphi(\filH),\ultW)$.
Let $a,b\in A$ such that $a\condi b\in \ult$ and $a\in \filH$. Then
$1\condi(a\condi b)\in \ult$. As $T_A(\ult,\varphi(\fil),\ultV)$
and $1\in \fil$, $a\condi b\in \ultV$. Finally, from $T_A(\ultV,\varphi(\filH),\ultW)$
and $a\in \filH$ we get $b\in \ultW$.

\smallskip

 ($\Leftarrow$) Suppose that there exist $a,b,c\in A$ such that $a\condi b\nleq c\condi (a\condi b)$. Then, there exists an ultrafilter $\ult$ such that $a\condi b\in \ult$ but $c\condi (a\condi b)\notin \ult$. So, there exists $\ultV\in \Ul(A)$ such that $T_A(\ult,\varphi(c),\ultV)$ but $a\condi b\notin \ultV$. Thus, there exists $\ultW\in \Ul(A)$ such that $T_A(\ultV,\varphi(a),\ultW)$ but $b\notin \ultW$. By assumption, $T_A(\ult,\varphi(a),\ultW)$ but $b\in\D{\ult}{\upop a}$ and $b\notin \ultW$, a contradiction.

\smallskip

\eqref{eq:crrspndc-6} Immediate.

\smallskip

\eqref{eq:crrspndc-7} ($\Rarrow$) Let $\ult,\ultV\in \Ul(A)$ and $\fil\in \Fi(A)$ be such that $T_A(\ult,\varphi(\fil),\ultV)$. First note that if $b\in\D{\ult}{\ultV}$ then by assumption $\neg b\notin \fil$. Let us consider the set $I=\{b:\neg b\in\D{\ult}{\ultV}\}$. It is easy to see that $I$ is an ideal and $\fil\cap I=\emptyset$. So, there exists $\ultW\in \varphi(\fil)$ such that $\ultW\cap I=\emptyset$. We get that $\D{\ult}{\ultV}\subseteq \ultW$ and therefore $T_A(\ult,\{\ultV\},\ultW)$.

\smallskip

($\Leftarrow$) Suppose that there exist $a,b\in A$ be such that $a\condi b\nleq \neg b \condi \neg a$. Then, there exists $\ult\in \Ul(A)$ such that $a\condi b\in \ult$ but $\neg b\condi \neg a\notin \ult$. So, there exists $\ultV\in \Ul(A)$ such that $T_A(\ult,\varphi( \neg b),\ultV)$ but $a\in \ultV$. In addition, $b\in\D{\ult}{\ultV}$. By assumption, there exists $\ultW\in \varphi(\neg b)$ such that $T_A(\ult,\{\ultV\},\ultW)$, i.e., $\D{\ult}{\ultV}\subseteq \ultW$, which implies that $b\in \ultW$, a contradiction.

\smallskip

\eqref{eq:crrspndc-8} ($\Rarrow$) Let $\ult,\ultV,\ultW\in \Ul(A)$ and $\fil,\filH\in\Fi(A)$ be
such that $T_A(\ult,\varphi(\fil),\ultV)$ and $T_A(\ult,\varphi(\filH),\ultW)$.
Let $b\in A$ such that $b\in\D{\ultV}{\filH}$, i.e., there exists $a\in A$ such that $a\condi b\in \ultV$ and $a\in \filH$. We
prove that $a\condi b\in \ult$. To get a contradiction, suppose $\neg(a\condi b)\in \ult$. Thus $1\condi\neg(a\condi b)\in \ult$. As $T_A(\ult,\varphi(\fil),\ultV)$
and $1\in \fil$, we have $\neg(a\condi b)\in \ultV$, which is impossible.
Then $a\condi b\in \ult$, and from $T_A(\ult,\varphi(\filH),\ultW)$
and $a\in \filH$, we get $b\in \ultW$. Therefore $\D{\ultV}{\filH}\subseteq \ultW$.

\smallskip

($\Leftarrow$) Suppose that there exist $a,b,c\in A$ such that $\neg (a\condi b)\nleq c\condi \neg (a\condi b)$. Then, there exists $\ult\in \Ul(A)$ such that $\neg (a\condi b)\in \ult$ but $c\condi \neg (a\condi b)\notin \ult$. So, there exists $\ultV\in \Ul(A)$ such that $T_A(\ult,\varphi(c),\ultV)$  and $a\condi b\in \ultV$. From $a\condi b\notin \ult$, we get that there exists $\ultW\in \Ul(A)$ such that $T_A(\ult,\varphi(a),\ultW)$ but $b\notin \ultW$. By assumption, $T_A(\ultV,\varphi(a),\ultW)$, but since $a\condi b\in \ultV$, we obtain that $b\in \ultW$, a contradiction. 

\smallskip

\eqref{eq:crrspndc-11} ($\Rarrow$) Let $\ult,\ultV\in \Ul(A)$, $\fil_1,\fil_2\in \Fi(A)$ and suppose that $T_A(\ult, \varphi(\fil_1),\ultV)$ and $T_A(\ult,\varphi(1))\cap \varphi(\fil_1)\subseteq \varphi(\fil_2)$. We will prove that $\D{\ult}{\fil_2}\subseteq \ultV$. Let $c\in \D{\ult}{\fil_2}$. Then, there exists $b\in \fil_2$ such that $b\condi c\in \ult$. Consider the filter $\fil_3$ generated by $\D{\ult}{\{1\}}\cup \fil_1$. Note that $\fil_2\subseteq \fil_3$. Suppose, to get a contradiction, that there exists $a\in \fil_2$ such that $a\notin \fil_3$. So, there exists $\ultW\in \Ul(A)$ such that $\fil_3\subseteq \ultW$ and $a\notin \ultW$. Then, $\ultW\in T_A(\ult,\varphi(1))\cap \varphi(\fil_1)$ and by assumption, $\ultW\in \varphi(\fil_2)$ and we get $a\in\fil_2\subseteq \ultW$, a contradiction. So, $b\in \fil_3$ and it follows that there exist $d\in \D{\ult}{\{1\}}$ and $a\in \fil_1$ such that $d\land a\leq b$. It is immediate that $d\leq \neg a \lor b$ and by \ref{eq:cond-order-preserving}, $1\condi d\leq 1\condi (\neg a \lor b)$. Since $d\in \D{\ult}{\{1\}}$, $1\condi d\in \ult$ and thus $1\condi (\neg a\lor b)\in \ult$. We get $1\condi (\neg a\lor b)\land (b\condi c)\in \ult$ and by assumption, $(1\condi (\neg a\lor b))\land (b\condi c)\leq a\condi c$. It follows that $c\in \D{\ult}{\fil_1}$ and since  $T_A(\ult, \varphi(\fil_1),\ultV)$, $c\in \ultV$. 

\smallskip

($\Leftarrow$) Suppose that there exist $a,b,c\in A$ such that $(1\condi (\neg a\lor b))\land (b\condi c)\nleq a\condi c$. Then, there exists $\ult\in \Ul(A)$ such that $(1\condi (\neg a\lor b))\land (b\condi c)\in \ult$ but $a\condi c\notin \ult$. So, there exists $\ultV\in \Ul(A)$ such that $T_A(\ult,\varphi(a),\ultV)$ and $c\notin \ultV$. We will see that $T_A(\ult,\varphi(1))\cap \varphi(a)\subseteq \varphi(b)$. Let $\ultW\in T_A(\ult,\varphi(1))\cap \varphi(a)$. Then, $\D{\ult}{\{1\}}\subseteq \ultW$ and $a\in w$. By assumption, $ \neg a\lor b\in \D{\ult}{\{1\}}\subseteq \ultW$, and we get that $\neg a\lor b\in \ultW$. Thus, since $a\in \ultW$, it follows that $b\in \ultW$ and we get $\ultW\in\varphi(b)$. By assumption, $T_A(\ult,\varphi(b),\ultV)$ and it follows $c\in \D{\ult}{\upop b}\subseteq \ultV$, a contradiction.
\end{proof}

\subsection{Canonicity}

Theorem \ref{th:correspondences-for-CAs} together with the following Theorem
\ref{th:conditions-to-prove-canonicity} guarantee
that all the varieties considered in this work are canonical in
the sense we explain below. 

\begin{definition}
An equation
is \textit{canonical} (i.e., \emph{$\pi$-canonical}) for conditional algebras if whenever
it is valid in a conditional  algebra $\frA\defeq\klam{A,\condi}$, then it is valid
in the full complex algebra $\Em(\frA))=\langle\mathcal{P}(\Ul(A)),\condi_{T_A}\rangle$
of its conditional space $\Es(\frA)\defeq\langle\Ul(A),\topos,T_A\rangle$. 
\end{definition}
 
\begin{lemma}\label{lem:T-via-condi}
    Let $\frX\defeq\langle X,\tau,T\rangle$ be a conditional space. Let $x\in X$, $Y\in \closed(\tau)$ and $W\subseteq X$. Then, $T(x,Y)\subseteq W$ if and only if $x\in Y\Tcondi W$.
\end{lemma}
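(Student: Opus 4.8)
The plan is to unwind the definition \eqref{df:Tcondi} of $\Tcondi$ in the case where the underlying family $\calF_X$ is $\closed(\tau)$, and to exploit the fact that $Y$ itself is one of the closed sets over which the universal quantifier in \eqref{df:Tcondi} ranges. By definition, $x\in Y\Tcondi W$ means precisely that $T(x,Z)\subseteq W$ for every $Z\in\power(Y)\cap\closed(\tau)$, i.e., for every closed subset $Z$ of $Y$. The statement is thus the closed-set instance of Lemma~\ref{lem:simpler-Tcondi}, and the whole argument is short.

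For the implication from right to left, I would simply observe that since $Y\in\closed(\tau)$ and $Y\subseteq Y$, the set $Y$ is itself an admissible value of $Z$. Hence the defining condition of $Y\Tcondi W$, instantiated at $Z=Y$, yields $T(x,Y)\subseteq W$ immediately.

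For the converse, suppose $T(x,Y)\subseteq W$ and fix an arbitrary closed $Z\subseteq Y$; I must show $T(x,Z)\subseteq W$. To this end, take $y\in T(x,Z)$, that is, $T(x,Z,y)$. This is the only place where the conditional-space structure is used: by Proposition~\ref{prop:monotonicity-for-T} the relation $T$ is upward closed in $\closed(\tau)$ in the second coordinate, so from $T(x,Z,y)$ together with $Z\subseteq Y$ and $Y\in\closed(\tau)$ we obtain $T(x,Y,y)$, i.e., $y\in T(x,Y)\subseteq W$. As $y$ was arbitrary, $T(x,Z)\subseteq W$, and since $Z$ was an arbitrary closed subset of $Y$, I conclude $x\in Y\Tcondi W$.

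There is essentially no obstacle here: the only nontrivial ingredient is the upward closure of $T$ in $\closed(\tau)$, which is furnished by axiom~\ref{T3} through Proposition~\ref{prop:monotonicity-for-T}. The single point to be careful about is that $W$ need not be clopen (nor even closed), so one cannot invoke the clopen formula stated at the end of Proposition~\ref{prop:monotonicity-for-T}; instead one argues directly from the upward closure of $T$, which is valid for an arbitrary target set $W$.
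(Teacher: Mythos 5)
Your proof is correct and takes essentially the same route as the paper's: the right-to-left direction instantiates $Z=Y$ in the definition of $\Tcondi$, and the left-to-right direction rests on the upward closure of $T$ in $\closed(\tau)$ in the second coordinate --- you invoke this via Proposition~\ref{prop:monotonicity-for-T}, while the paper unfolds the same argument directly from condition~\ref{T3} (passing through clopen supersets $U\supseteq Y\supseteq Z$). Your observation that $W$ need not be clopen, so one must argue from upward closure rather than from the clopen formula $U\Tcondi V=\{x\mid T(x,U)\subseteq V\}$, is accurate and matches what the paper implicitly does.
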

\begin{proof}
    ($\Rarrow$) Suppose that $T(x,Y)\subseteq W$. Let $T(x,Z,y)$ such that $Z\subseteq Y$. Let $U\in \clopen(\tau)$ such that $Y\subseteq U$. Since $Z\subseteq U$, we get that $T(x,U,y)$. Thus, by condition \ref{T3}, $T(x,Y,y)$ and it follows $y\in W$.
    
    ($\Larrow$) $x\in Y\Tcondi W$. Since $Y\subseteq Y$, $T(x,Y)\subseteq W$.
\end{proof}

\begin{theorem}\label{th:conditions-to-prove-canonicity}
The following equivalences hold between any conditional space $\frX\defeq\langle X,\tau,T\rangle$ and its complex algebra $\Cm(\frX)=\klam{\power(\Ul(X)),\Tcondi}$ \textup{(}for brevity, we omit the subscript $T$ at $\condi$\textup{)}\/\textup{:} 
\allowdisplaybreaks
\begin{align}
\begin{split}
\Cm(\frX) \models(a\condi c)\wedge(b\condi c)&{}\leq(a\vee b)\condi c
\quad\text{iff}\\ 
&\frX \models\forall xy\forall Y\neq \emptyset(TxYy\Rarrow(\exists z\in Y)\,Tx\{z\}y) \label{eq:crrspndc-II-3}
\end{split}\tag{T3$^\ast$}
\\[1em]
 \Cm(\frX) \models a\condi b\leq c\condi(a\condi b)
&{}\quad\text{iff}\quad \frX \models\forall xyz\forall YZ(TxYy\,\&\,TyZz\Rarrow TxZz)\tag{T4}\label{eq:crrspndc-II-5}\\
 \Cm(\frX) \models a\wedge(a\condi b)\leq b&{}\quad\text{iff}\quad \frX \models\forall xTx\{x\}x\tag{T5}\label{eq:crrspndc-II-6}\\
 \Cm(\frX) \models a\condi b\leq\neg b\condi\neg a
&{}\quad\text{iff}\quad\frX \models\forall xy\forall Y(TxYy\Rarrow(\exists z\in Y)\,Tx\{ y\} z)\tag{T6}\label{eq:crrspndc-II-7}\\[1em]
\begin{split}
 \Cm(\frX) \models\neg(a\condi b)\leq c\condi\neg&{}(a\condi b)
\quad\text{iff}\quad\\
&\frX \models\forall xyz\forall Y\!Z(TxYy\,\&\,TxZz\Rarrow TyZz)\,.\label{eq:crrspndc-II-8}
\end{split}\tag{T7}
\\[1em]
\begin{split}
 \Cm(\frX) \models (1\condi(\neg a \vee b))\wedge (b&{}\condi c){}\leq a\condi c
\quad\text{iff}\quad\\
&\frX \models \forall xy\forall YZ(Tx Y y\,\&T(x,X)\cap Y\subseteq Z \Rarrow Tx Zy)\,.\label{eq:crrspndc-II-11}
\end{split}\tag{T8}
\end{align}
\end{theorem}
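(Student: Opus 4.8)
The plan is to treat each of the six biconditionals separately, in every case deriving the equation in $\Cm(\frX)$ from the frame condition directly, and deriving the frame condition from the equation by contraposition. For the full complex algebra, definition~\eqref{df:Tcondi} reduces to $U\Tcondi W=\{x\in X:T(x,Z)\subseteq W\text{ for every }Z\in\closed(\tau)\text{ with }Z\subseteq U\}$, since $T(x,Z)=\emptyset$ whenever $Z\notin\closed(\tau)$; this, together with Lemma~\ref{lem:T-via-condi}, is the main tool. I will repeatedly use that singletons are closed (the space is Hausdorff) and that, by Proposition~\ref{prop:monotonicity-for-T}, $T$ is upward closed in $\closed(\tau)$, so that $T(x,Z)\subseteq T(x,Z')$ whenever $Z\subseteq Z'$ are closed. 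The computations run parallel to those of Theorem~\ref{th:correspondences-for-CAs}; the only new feature is that $a,b,c$ now range over arbitrary subsets of $X$, but since $U\Tcondi W$ inspects only the closed subsets of $U$, this extra generality is harmless.

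For the directions that pass from the frame condition to the equation in \eqref{eq:crrspndc-II-5}--\eqref{eq:crrspndc-II-8}, I would unfold $\Tcondi$ and feed the hypothesis the relevant closed sets. For \eqref{eq:crrspndc-II-6}, say, if $x\in a\cap(a\Tcondi b)$ then $\{x\}$ is a closed subset of $a$, so $T(x,\{x\})\subseteq b$, and $T(x,\{x\},x)$ gives $x\in b$; the cases \eqref{eq:crrspndc-II-5}, \eqref{eq:crrspndc-II-7}, \eqref{eq:crrspndc-II-8} chain one or two applications of the hypothesis along closed subsets of the antecedent sets. For the opposite directions I would begin with a triple witnessing the failure of the frame property and test the inequality on a singleton and a co-singleton: to refute \eqref{eq:crrspndc-II-6} from the failure of $T(x,\{x\},x)$ one takes $a\defeq\{x\}$ and $b\defeq T(x,\{x\})$; to refute \eqref{eq:crrspndc-II-7} one takes $a\defeq\{y\}$ and $b\defeq Y^{c}$; and \eqref{eq:crrspndc-II-5}, \eqref{eq:crrspndc-II-8} are handled in the same style, excising the offending endpoint by a co-singleton. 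In each case upward closure of $T$ puts the test point into the left-hand side of the inequality while the failing triple keeps it out of the right-hand side.

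The case \eqref{eq:crrspndc-II-11} is similar but additionally uses \ref{T1}. In the frame-to-equation direction, given $x$ in the left-hand intersection and a closed $Y\subseteq a$ with $y\in T(x,Y)$, I would set $Z\defeq T(x,X)\cap Y$, which is closed because $T(x,X)\in\closed(\tau)$ by \ref{T1}; the membership $x\in X\Tcondi(a^{c}\cup b)$ forces $Z\subseteq b$, the frame condition then gives $T(x,Z,y)$, and $x\in b\Tcondi c$ yields $y\in c$. The reverse direction takes $a\defeq Y$, $b\defeq Z$ and $c\defeq\{y\}^{c}$ from a failing instance of the frame property and checks the three resulting memberships by upward closure, as before.

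The genuinely delicate case is \eqref{eq:crrspndc-II-3}, corresponding to \eqref{C3-reversed}, and it is the only one in which the topology is indispensable. Its hard direction assumes that \eqref{C3-reversed} holds throughout $\Cm(\frX)$ and must produce the frame property. Suppose $T(x,Y,y)$ with $Y\neq\emptyset$ closed, while $T(x,\{z\},y)$ fails for every $z\in Y$. By \ref{T3} I can choose, for each $z\in Y$, a clopen $U_{z}\ni z$ with $y\notin T(x,U_{z})$, and compactness of $Y$ yields a finite subcover $Y\subseteq U_{z_{1}}\cup\cdots\cup U_{z_{n}}$. Upward closure gives $x\in U_{z_{i}}\Tcondi\{y\}^{c}$ for each $i$, so finitely many applications of \eqref{C3-reversed} give $x\in(U_{z_{1}}\cup\cdots\cup U_{z_{n}})\Tcondi\{y\}^{c}$; since $Y$ is a closed subset of this union, $y\notin T(x,Y)$, contradicting $T(x,Y,y)$. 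The converse direction is the analogue of Lemma~\ref{cond C3*} and needs only a separate treatment of the empty antecedent set. I expect this compactness step --- converting a second-order splitting of the closed set $Y$ into finitely many instances of the first-order inequality \eqref{C3-reversed}, thereby exploiting \ref{T3} rather than a mere unfolding of definitions --- to be the main obstacle of the whole proof.
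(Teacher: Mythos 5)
Your proposal is correct and follows essentially the same route as the paper's proof: the frame-to-equation directions by unfolding $\Tcondi$ on closed subsets, the equation-to-frame directions by testing the inequality on singletons and co-singletons (using upward closure of $T$), the use of \ref{T1} for \eqref{eq:crrspndc-II-11}, and, crucially, the identical compactness-plus-\ref{T3} finite-subcover argument for \eqref{eq:crrspndc-II-3}. The only deviations (e.g., $b\defeq T(x,\{x\})$ instead of $\{x\}^c$ in \eqref{eq:crrspndc-II-6}, and the contrapositive formulation with $c\defeq\{y\}^c$ in \eqref{eq:crrspndc-II-11} where the paper argues directly with $c\defeq T(x,Z)$) are cosmetic.
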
 
\begin{proof}
\eqref{eq:crrspndc-II-3} ($\Rarrow$) Suppose that  $\Cm(\frX) \models(a\condi c)\wedge(b\condi c)\leq(a\vee b)\condi c$. Let $T(x,Y,y)$ and assume that $Y\neq\emptyset$. Assume, to get a contradiction, that for all $z\in Y$, $T^c(x,\{z\},y)$. Fix a $z'\in Y$. By condition \ref{T3}, there exists $V_{z'}\in\clopen(\tau)$ such that $z'\in V_{z'}$ and $T^c(x,V_{z'},y)$. Let us consider the family of all $V_z\in \clopen(\tau)$ such that $z\in Y$. Then, 
\[
Y\subseteq \bigcup_{z\in Y}V_z\,,\qquad\text{and in consequence}\quad \displaystyle Y\subseteq \bigcup_{i=1}^n V_{z_i}
\]
for a finite family of elements of $Y$, since $Y$ is compact (as a closed subset of a compact space). From $y\notin T(x,V_{z_i})$ it follows that $x\in V_{z_i}\Tcondi \{y\}^c$ for all $a\leq i\leq n$. By assumption, 
\[
\displaystyle x\in \bigcap_{i=1}^n ( V_{z_i}\Tcondi \{y\}^c) \subseteq \left(\bigcup_{i=1}^n V_{z_i}\right) \Tcondi \{y\}^c
\]
and since $Y\subseteq \bigcup_{i=1}^n V_{z_i}$, we get that $y\in \{y\}^c$, a contradiction.

\smallskip

($\Larrow$) Let $W_1,W_2,W_3\subseteq X$ and $x\in (W_1\Tcondi W_3)\cap (W_2\condi W_3)$. Let $T(x,Y,y)$ be such that $Y\subseteq W_1\cup W_2$. If $Y=\emptyset$, we get that $y\in W_3$. So assume that $Y\neq \emptyset$, which entails existence of $z\in Y$ such that $T(x,\{z\},y)$. Since $z\in W_1$ or $z\in W_2$, it follows that $y\in W_3$.

\smallskip

\eqref{eq:crrspndc-II-5} ($\Rarrow$) Suppose that $\Cm(\frX) \models a\condi b\leq c\condi(a\condi b)$. Let $x,y,z\in X$ and $Y,Z\in \closed(\tau)$ be such that  $T(x,Y,y)$ and $T(y,Z,z)$. Assume that $T^c(x,Z,z)$. From, $T(x,Z)\subseteq \{z\}^c$, we get that $x\in Z\Tcondi \{z\}^c$. By assumption, $x\in Y \Tcondi (Z\Tcondi \{z\}^c)$ and it follows that $y\in Z\Tcondi \{z\}^c$. Thus, $z\in \{z\}^c$ which is a contradiction.

\smallskip

($\Larrow$) Let $W_1,W_2,W_3\subseteq X$ and let $x\in (W_1\Tcondi W_2)$. Suppose $T(x,Y,y)$ and $Y$ is such that $Y\subseteq W_3$. We will prove that $y\in (W_1\Tcondi W_2)$. Let $T(y,Z,z)$ such that $Z\subseteq W_1$. By assumption, $T(x,Z,z)$, so $z\in W_2$. It follows that $y\in W_1\Tcondi W_2$, and thus $x\in W_3\Tcondi (W_1\Tcondi W_2)$.

\smallskip

\eqref{eq:crrspndc-II-6} ($\Rarrow$) Suppose that $\Cm(\frX) \models a\wedge(a\condi b)\leq b$ and let $x\in X$. Suppose that $T^c(x,\{x\},x)$. Then $x\in \{x\}\Tcondi \{x\}^c$. By assumption, $x\in \{x\}\cap (\{x\}\Tcondi \{x\}^c)\subseteq \{x\}^c$, a contradiction. 

\smallskip

($\Larrow$) Let $W_1,W_2\subseteq X$ and $x\in W_1\cap (W_1\Tcondi W_2)$. So, $\{x\}\subseteq W_1$ and by assumption $T(x,\{x\},x)$. Therefore, $x\in W_2$.

\smallskip

\eqref{eq:crrspndc-II-7} ($\Rarrow$) Suppose that $\Cm(\frX) \models a\condi b\leq\neg b\condi\neg a$ and let $T(x,Y,y)$. Suppose that for all $z\in Y$, $T^c(x,\{y\},z)$. So, $T(x,\{y\})\subseteq Y^c$ and we get that $x\in \{y\}\Tcondi Y^c$. By assumption, $x\in Y\Tcondi \{y\}^c$ which entails $y\in\{y\}^c$, a contradiction.

\smallskip

($\Larrow$) Let $W_1,W_2\subseteq X$ and let $x\in X$. Suppose $x \in W_1\Tcondi W_2$ and $T(x,Y,y)$, where $Y$ is such that $Y\subseteq W_2^c$. By assumption, there exists $z \in Y$ with  $T(x, \{y\}, z)$. If $y\in W_1$, we get that $z\in W_2$, a contradiction. Therefore $y\in W_1^c$.

\smallskip

\eqref{eq:crrspndc-II-8} ($\Rarrow$) Suppose that $\Cm(\frX) \models\neg(a\condi b)\leq c\condi\neg(a\condi b)$. Let  $T(x,Y,y)$ and $T(x,Z,z)$. Suppose $T^c(y,Z,z)$. Then, $T(y,Z)\subseteq\{z\}^c$ and we get $y\in Z\Tcondi \{z\}^c$. On the other hand, we have $x\notin Z\Tcondi \{z\}^c$. By assumption, $x\in Y\Tcondi (Z\Tcondi \{z\}^c)^c$ and it follows that $y\notin Z\Tcondi \{z\}^c$, a contradiction.

\smallskip

($\Larrow$) Let $W_1,W_2\subseteq X$, $x \notin W_1\Tcondi W_2$ and $T(x,Y,y)$ where $Y\subseteq W_3$. So, there exist $Z\in \closed(\tau)$ and $z\in X$ such that $Z\subseteq W_1$ but $z\notin W_2$. By assumption, $T(y,Z,z)$ and it follows that $y\notin W_1\Tcondi W_2$. 

\smallskip

\eqref{eq:crrspndc-II-11} ($\Rarrow$) Suppose that $\Cm(\frX) \models (1\condi(\neg a \vee b))\wedge (b\condi c)\leq a\condi c$ and let $T(x,Y,y)$ where $Y$ is such that $T(x,X)\cap Y \subseteq Z$. Then, $T(x,X)\subseteq Y^c\cup Z$. It follows that $x\in X \condi_T Y^c\cup Z$. In addition, we get that $x\in Z \condi_T T(x,Z)$. By assumption, it follows that $x\in Y\condi_T T(x,Z)$, i.e., $T(x,Y)\subseteq T(x,Z)$. Since $y\in T(x,Y)$, we get that $T(x,Z,y)$.

\smallskip

($\Larrow$) Now, let $W_1,W_2,W_3\subseteq X$ and suppose that $x\in (X\condi_T (W_1^c\cup W_2))\cap (W_2\condi_T W_3)$. Let $Y\in \closed(\tau)$ be such that $Y\subseteq W_1$. We will show that $T(x,Y)\subseteq W_3$. To this end, let $y\in T(x,Y)$. Since $x\in X\condi_T (W_1^c\cup W_2)$, we get that $T(x,X)\subseteq W_1^c\cup W_2$, i.e., $T(x,X)\cap W_1\subseteq W_2$. Then, $T(x,X)\cap Y\in\closed(\tau)$ and $T(x,X)\cap Y\subseteq T(x,X)\cap W_1\subseteq W_2$ and $T(x,X)\cap Y\subseteq T(x,X)\cap Y$. By assumption,  $T(x,T(x,X)\cap Y,y)$. From $x\in W_2\condi_T W_3$ and $T(x,X)\cap Y\subseteq W_2$, we get that $y\in W_3$.
\end{proof}

\begin{corollary}
    The varieties $\PSB$, $\PsC$, $\SIA$ and $\StwoIA$ are closed under canonical extensions. 
\end{corollary}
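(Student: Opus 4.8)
The plan is to derive the corollary from two facts already in place: that $\CA$ is closed under canonical extensions (Theorem~\ref{th:pi-extension}), and that each equation one adjoins to $\CA$ in order to carve out these varieties is \emph{$\pi$-canonical}, i.e.\ survives the passage from $\frA$ to its canonical extension $\Em(\frA)=\Cm(\Es(\frA))$. Each of $\PSB$, $\PsC$, $\SIA$ and $\StwoIA$ arises from $\CA$ by adjoining some subset of the equations \eqref{C1-for-0}, \eqref{C3-reversed} and \eqref{C4}--\eqref{C8}; so once every one of these seven equations is shown to be $\pi$-canonical, the canonical extension of any algebra in one of the four varieties will again lie in $\CA$ and validate the relevant extra equations, hence remain in the variety.

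The engine of the argument is the pairing of Theorems~\ref{th:correspondences-for-CAs} and~\ref{th:conditions-to-prove-canonicity}. For an equation $\Phi$ among \eqref{C4}--\eqref{C8}, Theorem~\ref{th:correspondences-for-CAs} supplies a sentence $\Psi_\Phi$ in the language of $T$ with $\frA\models\Phi$ iff $\Es(\frA)\models\Psi_\Phi$, while Theorem~\ref{th:conditions-to-prove-canonicity} supplies the \emph{same} sentence with $\Cm(\frX)\models\Phi$ iff $\frX\models\Psi_\Phi$ for every conditional space $\frX$; one checks that the frame conditions coincide verbatim up to renaming of variables (compare \eqref{eq:crrspndc-5} with \eqref{eq:crrspndc-II-5}, \eqref{eq:crrspndc-6} with \eqref{eq:crrspndc-II-6}, and so on). Since $\Es(\frA)$ is a conditional space (Theorem~\ref{th:Es-is-CS}) and $\Em(\frA)=\Cm(\Es(\frA))$, I would instantiate $\frX\defeq\Es(\frA)$ and chain the biconditionals
\[
\frA\models\Phi\iff\Es(\frA)\models\Psi_\Phi\iff\Em(\frA)\models\Phi\,,
\]
whose left-to-right direction is exactly $\pi$-canonicity of $\Phi$.

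For \eqref{C3-reversed} the same scheme applies, now with $\Psi$ the sentence appearing in \eqref{eq:crrspndc-II-3}: the implication from validity of \eqref{C3-reversed} in $\frA$ to $\Es(\frA)\models\Psi$ is furnished by Lemma~\ref{lem:fundamental}, applied with a nonempty middle set $Y=\varphi(\fil)$ so that $\fil$ is proper and hence extendable to a witnessing ultrafilter; the converse is Lemma~\ref{cond C3*}, and the complex-algebra side is \eqref{eq:crrspndc-II-3}. The single equation falling outside Theorem~\ref{th:conditions-to-prove-canonicity} is \eqref{C1-for-0}, which I would handle by hand: by Lemma~\ref{lem:T-middle-non-empty}, \eqref{C1-for-0} holds in $\frA$ iff $T_A(\ult,\emptyset)=\emptyset$ for every $\ult$, and since $\emptyset$ is the only closed subset of $\emptyset$, the defining clause for $\condi_{T_A}$ gives $\emptyset\condi_{T_A}V=\{\ult:T_A(\ult,\emptyset)\subseteq V\}=\Ul(A)$ for every $V$, that is, $\Em(\frA)$ validates \eqref{C1-for-0}.

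With all seven equations established as $\pi$-canonical, the corollary follows by combining them with Theorem~\ref{th:pi-extension} variety by variety. The step I expect to be most delicate is not any single computation but the bookkeeping: checking that the frame correspondent read off on the algebraic side genuinely agrees, formula for formula, with the one read off on the complex-algebra side, and not forgetting to dispatch \eqref{C1-for-0} separately, since Theorem~\ref{th:conditions-to-prove-canonicity} does not list it.
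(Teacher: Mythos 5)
Your proposal is correct and follows essentially the same route as the paper: it chains the algebra-to-frame correspondences (Theorem~\ref{th:correspondences-for-CAs}, supplemented by Lemmas~\ref{lem:T-middle-non-empty} and~\ref{lem:fundamental} for \eqref{C1-for-0} and \eqref{C3-reversed}) with the frame-to-complex-algebra equivalences of Theorem~\ref{th:conditions-to-prove-canonicity} instantiated at $\frX\defeq\Es(\frA)$, exactly as the paper intends. Your separate hand-check of \eqref{C1-for-0}, which falls outside Theorem~\ref{th:conditions-to-prove-canonicity}, is precisely the detail the paper leaves implicit, and you carry it out correctly.
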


\section{Summary and further work}

We have shown that the techniques developed for monotonic operators in \citep{Celani-TDFBAWANNMO,Menchon-PhD,Celani-et-al-MDS} turned out to be prolific enough to develop the dualities and the canonical extensions for the variety $\CA$ whose elements are algebraic models of a system of basic conditional logic. Moreover, we have demonstrated that the same techniques are applicable to well-known subvarieties of $\CA$.

In a future installment of this work, we aim to develop the theory of quasi-conditional algebras in the spirit of quasi-modal operators of Celani's \citeyearpar{Celani-QMA}, and we want to investigate the relation of both conditional and quasi-conditional algebras to weak extended contact algebras from \citep{Balbiani-et-al-RRTFECA}.

\section*{Acknolwedgements}

This research was funded by (a) the National Science Center (Poland), grant number~2020/39/B/HS1/00216 and (b) the MOSAIC project (EU H2020-MSCA-RISE-2020 Project 101007627).

We would like to thank two anonymous referees for the excellent reports, which were insightful, substantial and helped us improve the paper a lot.

\nocite{Weiss-BICL,Ciardelli-et-al-ICL}

\bibliographystyle{apalike}

\providecommand{\noop}[1]{}

\end{document}